\title{\Huge\bf $\,$\\[-6.00ex]
\hspace*{-1.20ex}\mbox{Domination Mappings into the Hamming~Ball:}\\ 
Existence, Constructions, and Algorithms\\[3.00ex]}
\author{
{\Large Yeow Meng Chee}\\
   \small School of Physical and Mathematical Sciences\vspace*{-0.50ex}\\
   \small Nanyang Technological University\vspace*{-0.50ex}\\
   \small 21 Nanyang Link, Singapore 637371\vspace*{-0.25ex}\\
   \ttfamily\bfseries\small ymchee@ntu.edu.sg\\[6.0ex]
\and
{\Large Tuvi Etzion}\\
   \small Department of Computer Science\vspace*{-0.50ex}\\
   \small Technion --- Israel Institute of Technology\vspace*{-0.50ex}\\
   \small Technion City, Haifa 32000, Israel\vspace*{-0.25ex}\\
   \ttfamily\bfseries\small etzion@cs.technion.ac.il\\[4.5ex]
\and
{\Large Han Mao Kiah}\\
   \small School of Physical and Mathematical Sciences\vspace*{-0.50ex}\\
   \small Nanyang Technological University\vspace*{-0.50ex}\\
   \small 21 Nanyang Link, Singapore 637371\vspace*{-0.25ex}\\
   \ttfamily\bfseries\small hmkiah@ntu.edu.sg\\[4.5ex]
\and
{\Large Alexander Vardy}\\
   \small Department of Computer Science and Engineering\vspace*{-0.50ex}\\
   \small University of California San Diego\vspace*{-0.50ex}\\
   \small 9500 Gilman Drive, La Jolla, CA\,92093\vspace*{-0.25ex}\\
   \ttfamily\bfseries\small avardy@ucsd.edu\\[6.5ex]
}
\date{\Large July 12, 2018\vspace{6.00ex}}
\newtheorem{theorem}{Theorem}
\newtheorem{lemma}[theorem]{Lemma}
\newtheorem{corollary}[theorem]{Corollary}
\newtheorem{conjecture}{Conjecture}
\newtheorem{proposition}[theorem]{Proposition}
\newtheorem{definition}{Definition}
\theoremstyle{definition}
\newtheorem{example}{Example}
\newcommand{\Tref}[1]{The\-o\-rem\,\ref{#1}}
\newcommand{\Lref}[1]{Lem\-ma\,\ref{#1}}
\newcommand{\Cref}[1]{Co\-ro\-lla\-ry\,\ref{#1}}
\newcommand{\Dref}[1]{Def\-i\-ni\-tion\,\ref{#1}}
\newcommand{\Eref}[1]{Ex\-am\-ple\,\ref{#1}}
\newcommand{\be}[1]{\begin{equation}\label{#1}}
\newcommand{\ee}{\end{equation}}
\newcommand{\eq}[1]{(\ref{#1})}
\renewcommand{\le}{\leqslant}
\renewcommand{\leq}{\leqslant}
\renewcommand{\ge}{\geqslant}
\renewcommand{\geq}{\geqslant}
\newcommand{\F}{\mathbb{F}}
\newcommand{\Z}{\mathbb{Z}}
\newcommand{\bbS}{\mathbb{S}}
\newcommand{\bbB}{\mathbb{B}}
\newcommand{\bbG}{\mathbb{G}}
\newcommand{\bfit}{\bfseries\itshape}
\newcommand{\deff}{\stackrel{\rm def}{=}}
\newcommand{\trunc}[1]{\left\lfloor {#1} \right\rfloor}
\newcommand{\ceil}[1]{\left\lceil {#1} \right\rceil}
\newcommand{\bits}{\{0,1\}}
\DeclareMathOperator{\wt}{wt}
\DeclareMathOperator{\supp}{supp}
\newcommand{\zero}{{\mathbf 0}}
\newcommand{\wts}{(m_1,m_2)\mbox{-}\wt}
\newcommand{\wtr}{(n_1,n_2;\delta)\mbox{-}\wt}
\newcommand{\cB}{{\cal B}}
\newcommand{\cG}{{\cal G}}
\newcommand{\cO}{{\cal O}}
\newcommand{\cR}{{\cal R}}
\newcommand{\cS}{{\cal S}}
\newcommand{\cX}{{\cal X}}
\DeclareMathAlphabet{\mathbfsl}{OT1}{ppl}{b}{it} 
\newcommand{\vzero}{\mathbf{0}} 
\newcommand{\vone}{\mathbf{1}} 
\newcommand{\bA}{\mathbfsl{A}}
\newcommand{\bX}{\mathbfsl{X}}
\newcommand{\aaa}{\mathbfsl{a}} 
\newcommand{\bbb}{\mathbfsl{b}} 
\newcommand{\ccc}{\mathbfsl{c}} 
\newcommand{\eee}{\mathbfsl{e}} 
\newcommand{\uuu}{\mathbfsl{u}} 
\newcommand{\vvv}{\mathbfsl{v}}
\newcommand{\xxx}{\mathbfsl{x}}
\newcommand{\yyy}{\mathbfsl{y}} 
\newcommand{\zzz}{\mathbfsl{z}}
\DeclareSymbolFont{Greek}{OML}{ptm}{m}{n}
\DeclareMathSymbol{\varphi}{\mathalpha}{Greek}{'047}
\DeclareMathSymbol{\gamma}{\mathalpha}{Greek}{'015}
\DeclareMathSymbol{\nu}{\mathalpha}{Greek}{'027}
\gdef\@punct{.\ \ }  
\def\@sect#1#2#3#4#5#6[#7]#8{%
  \ifnum #2>\c@secnumdepth
     \def\@svsec{}
  \else
     \refstepcounter{#1}\edef\@svsec{%
     \ifnum #2>0{{\csname the#1\endcsname}}.\fi%
    \hskip .5em}
  \fi
  \@tempskipa #5\relax
  \ifdim \@tempskipa>\z@
     \begingroup #6\relax
       \@hangfrom{\hskip #3\relax\@svsec}{\interlinepenalty \@M #8\par}
     \endgroup
     \csname #1mark\endcsname{#7}
     \addcontentsline{toc}{#1}{\ifnum #2>\c@secnumdepth\else
          \protect\numberline{\csname the#1\endcsname}\fi#7}
  \else
     \def\@svsechd{#6\hskip #3\@svsec #8\@punct\csname #1mark\endcsname{#7}
     \addcontentsline{toc}{#1}{\ifnum #2>\c@secnumdepth \else
          \protect\numberline{\csname the#1\endcsname}\fi#7}}
  \fi
  \@xsect{#5}}
\def\@ssect#1#2#3#4#5{\@tempskipa #3\relax
  \ifdim \@tempskipa>\z@
    \begingroup #4\@hangfrom{\hskip #1}{\interlinepenalty \@M #5\par}\endgroup
  \else \def\@svsechd{#4\hskip #1\relax #5\@punct}\fi
  \@xsect{#3}}
\begin{document}

\maketitle

\thispagestyle{empty}

\begin{abstract}
\vspace{-.36ex}
\noindent\looseness=-1
The Hamming ball of radius $w$ in $\bits^n$ is the set $\cB(n,w)$ of 
all binary words of length $n$ and Hamming weight at most $w$.
We consider injective mappings $\varphi\!: \bits^m \to \cB(n,w)$ 
with the following~\emph{domination property:} every position $j \in [n]$
is dominated by some position $i \in [m]$, in the sense that 
``switching off'' position $i$ in\, $\xxx \:{\in}\, \bits^m$ necessarily
switches off position $j$ in its image $\varphi(\xxx)$.
This property may be described more precisely in terms of 
a bipartite \emph{domination graph} $G = \bigl([m] \cup [n], E\bigr)$
with no isolated\linebreak vertices; for all $(i,j) \in E$ and all
$\xxx~{\in}\, \bits^m$, we require that $x_i = 0$ implies
$y_j = 0$, where $\yyy = \varphi(\xxx)$.
Al\-though such domination mappings recently found applications
in the context of coding for high-performance interconnects,
to the best of our knowledge, they were not previously studied.

In this paper, we begin with simple necessary conditions for the existence
of an \emph{$(m,n,w)$-domination mapping $\varphi\!: \bits^m \to \cB(n,w)$}.
We then provide several explicit constructions of such mappings,~which 
show that the necessary conditions are also sufficient when $w=1$, 
when $w=2$ and $m$ is odd, or when $m \le 3w$.
One of our main results herein is a proof that the trivial necessary
condition $|\cB(n,w)| \ge 2^m$ 
is, in fact, sufficient for the existence of an $(m,n,w)$-domination 
mapping whenever $m$ is sufficiently large. 
We also present a polynomial-time algorithm that, given any $m$, $n$, 
and $w$, determines whether an $(m,n,w)$-domination mapping exists 
for a domination graph with an equitable degree distribution.
\end{abstract}

\vspace*{6.00ex}
%

\newpage
\setcounter{page}{1}
\renewcommand{\baselinestretch}{1.05}\normalsize

\section{Introduction}
\label{sec:intro}
\label{sec:introduction}
\vspace{-0.56ex}

Given a binary word $\yyy = (y_1,y_2,\ldots,y_n)$, 
the \emph{Hamming weight of $\yyy$} is the number of nonzero 
positions in~$\yyy$. 
Explicitly, \smash{$\wt(\yyy) \deff \bigl\{ j \in [n] \,:\, y_j \ne 0 \bigr\}$}.~
The \emph{Hamming ball of radius $w$ in $\bits^n$} is the set $\cB(n,w)$
of all words of weight at most $w$. Explicitly,
\smash{$\cB(n,w) \deff \bigl\{ \yyy \in \bits^n \,:\, \wt(\yyy) \le w \bigr\}$}.
Given $m \le n$,~we are interested in injective mappings $\varphi$ 
from $\bits^m$ into $\cB(n,w)$ that establish a certain domination 
relationship between positions in $\xxx \,{\in}\, \bits^m$ and positions
in its image $\yyy = \varphi(x)$.
Specifically, one should be able to ``switch off'' every position
$j \,{\in}\, [n]$ in $\yyy$ (that is, ensure that $y_j = 0$) by switching
off a corresponding position $i \,{\in}\, [m]$ in $\xxx$ (that is, setting
$x_i = 0$). More precisely, let $G = \bigl([m] \cup [n], E\bigr)$ be a 
bipartite graph with $m$ left vertices and $n$~right vertices.
If $G$ has no isolated right vertices, we refer to $G$ as
a \emph{\bfit domination graph}.

\begin{definition}
\label{domination-def}
Given an injective map $\varphi\!: \bits^m \to \cB(n,w)$ and 
a graph $G = \bigl([m] \cup [n], E\bigr)$, we say~that~$\varphi$ 
is a {\bfit $G$-domination mapping}, or {\bfit $G$-dominating} in brief, if
\begin{align*}
\forall\, (x_1,x_2,\ldots,x_m) \in \bits^m,
\hspace{0.90ex}
\forall\, (i,j) \in E :
\hspace{36.00ex}\\
\text{if\/ $\varphi(x_1,x_2,\ldots,x_m) = (y_1,y_2,\ldots,y_n)$ and $x_i = 0$,\,
then $y_j = 0$}
\end{align*}
We say that $\varphi$ is an {\bfit $(m,n,w)$-domination mapping} if
there exists a domination 
graph $G = \bigl([m] \cup [n], E\bigr)$, with no isolated right vertices,
such that $\varphi$ is {$G$-dominating}.
\end{definition}

\noindent
{\bf Example\,1.}
\label{Example1}
Let $m = 3$, $n = 4$, $w = 2$, and consider the injective map
$\varphi\!: \bits^3 \to \cB(4,2)$ given by
\be{example-mapping}
\begin{array}{lcr}
\varphi(000)=0000\\
\varphi(001)=0001
\end{array}
\hspace{1.80ex}
\begin{array}{lcr}
\varphi(010)=0010\\
\varphi(011)=0011
\end{array}
\hspace{1.80ex}
\begin{array}{lcr}
\varphi(100)=0100\\
\varphi(101)=0101
\end{array}
\hspace{1.80ex}
\begin{array}{lcr}
\varphi(110)=1000\\
\varphi(111)=1001
\end{array}
\ee
It is easy to verify by inspection that $\varphi$ is $G$-dominating, where 
$G = \bigl([3] \cup [4], E)$
can be chosen as any one of the~following three domination 
graphs:\vspace{-0.54ex}
$$
\scalebox{1.20}{%
\begin{tikzpicture}
\pgfsetlinewidth{0.72pt}
    \node [anchor=east] (L1) at (0.06, 1.00) {{\scriptsize$1$}};
    \node [anchor=east] (L2) at (0.06, 0.50) {{\scriptsize$2$}};
    \node [anchor=east] (L3) at (0.06, 0.00) {{\scriptsize$3$}};

    \filldraw[thick] (0.12,1.00) circle(1.80pt);
    \filldraw[thick] (0.12,0.50) circle(1.80pt);
    \filldraw[thick] (0.12,0.00) circle(1.80pt);

    \node [anchor=west] (R1) at (1.44, 1.00) {{\scriptsize$1$}};
    \node [anchor=west] (R2) at (1.44, 0.50) {{\scriptsize$2$}};
    \node [anchor=west] (R3) at (1.44, 0.00) {{\scriptsize$3$}};
    \node [anchor=west] (R4) at (1.44, -0.50) {{\scriptsize$4$}};

    \filldraw[thick] (1.38,1.00) circle(1.80pt);
    \filldraw[thick] (1.38,0.50) circle(1.80pt);
    \filldraw[thick] (1.38,0.00) circle(1.80pt);
    \filldraw[thick] (1.38,-0.50) circle(1.80pt);

    \draw (0.06, 1.00) -- (1.44, 1.00);
    \draw (0.06, 1.00) -- (1.44, 0.50);
    \draw (0.06, 0.50) -- (1.44, 1.00);
    \draw (0.06, 0.50) -- (1.44, 0.00);
    \draw (0.06, 0.00) -- (1.44, -0.50);
\end{tikzpicture}}
\hspace{6.00ex}
\scalebox{1.20}{%
\begin{tikzpicture}
\pgfsetlinewidth{0.72pt}
    \node [anchor=east] (L1) at (0.06, 1.00) {{\scriptsize$1$}};
    \node [anchor=east] (L2) at (0.06, 0.50) {{\scriptsize$2$}};
    \node [anchor=east] (L3) at (0.06, 0.00) {{\scriptsize$3$}};

    \filldraw[thick] (0.12,1.00) circle(1.80pt);
    \filldraw[thick] (0.12,0.50) circle(1.80pt);
    \filldraw[thick] (0.12,0.00) circle(1.80pt);

    \node [anchor=west] (R1) at (1.44, 1.00) {{\scriptsize$1$}};
    \node [anchor=west] (R2) at (1.44, 0.50) {{\scriptsize$2$}};
    \node [anchor=west] (R3) at (1.44, 0.00) {{\scriptsize$3$}};
    \node [anchor=west] (R4) at (1.44, -0.50) {{\scriptsize$4$}};

    \filldraw[thick] (1.38,1.00) circle(1.80pt);
    \filldraw[thick] (1.38,0.50) circle(1.80pt);
    \filldraw[thick] (1.38,0.00) circle(1.80pt);
    \filldraw[thick] (1.38,-0.50) circle(1.80pt);

    \draw (0.06, 1.00) -- (1.44, 0.50);
    \draw (0.06, 0.50) -- (1.44, 1.00);
    \draw (0.06, 0.50) -- (1.44, 0.00);
    \draw (0.06, 0.00) -- (1.44, -0.50);
\end{tikzpicture}}
\hspace{6.00ex}
\scalebox{1.20}{%
\begin{tikzpicture}
\pgfsetlinewidth{0.72pt}
    \node [anchor=east] (L1) at (0.06, 1.00) {{\scriptsize$1$}};
    \node [anchor=east] (L2) at (0.06, 0.50) {{\scriptsize$2$}};
    \node [anchor=east] (L3) at (0.06, 0.00) {{\scriptsize$3$}};

    \filldraw[thick] (0.12,1.00) circle(1.80pt);
    \filldraw[thick] (0.12,0.50) circle(1.80pt);
    \filldraw[thick] (0.12,0.00) circle(1.80pt);

    \node [anchor=west] (R1) at (1.44, 1.00) {{\scriptsize$1$}};
    \node [anchor=west] (R2) at (1.44, 0.50) {{\scriptsize$2$}};
    \node [anchor=west] (R3) at (1.44, 0.00) {{\scriptsize$3$}};
    \node [anchor=west] (R4) at (1.44, -0.50) {{\scriptsize$4$}};

    \filldraw[thick] (1.38,1.00) circle(1.80pt);
    \filldraw[thick] (1.38,0.50) circle(1.80pt);
    \filldraw[thick] (1.38,0.00) circle(1.80pt);
    \filldraw[thick] (1.38,-0.50) circle(1.80pt);

    \draw (0.06, 1.00) -- (1.44, 1.00);
    \draw (0.06, 1.00) -- (1.44, 0.50);
    \draw (0.06, 0.50) -- (1.44, 0.00);
    \draw (0.06, 0.00) -- (1.44, -0.50);
\end{tikzpicture}}
$$

\vspace{-2.70cm}
\be{example-graphs}
\ee

\vspace{0.60cm}
\hfill$\Box$

\vspace{0.27cm}
Note that the requirement of no isolated right vertices in 
\Dref{domination-def} means that \emph{every} position $j \in [n]$~in 
the image of $\varphi$ is dominated by some position $i \in [m]$
of its domain. This 
is motivated by the application~to 
thermal-management coding for high-performance 
interconnects~\cite{CEKV18,CEKVW18}, briefly described in 
what follows.

\vspace{.50ex}
\subsection{Applications to coding for interconnects}\vspace{-.72ex}

Given an interconnect (bus) consisting of $n$ wires, it is desirable
to control its average power consumption. This can be achieved by making
sure that, during each synchronized transmission, the total number of
transitions on the $n$ wires is below a specified threshold $w$.
As shown in~\cite{CEKV18}, the above translates directly into
the requirement that the transmitted binary word $\yyy$ is in 
the Hamming ball $\cB(n,w)$. It is also desirable to control
the peak temperature of an interconnect by effectively cooling
its hottest wires. That is, given an arbitrary subset $S$ of $[n]$,
which 
corresponds to the positions of the hottest wires,
we wish to transmit a word $(y_1,y_2,\ldots,y_n)$ such that
$y_j = 0$ for all $j \in S$.
The \emph{cooling codes} of \cite{CEKV18} provide an elegant
solution to this problem using spreads or partial spreads~\cite{ES16,NS17}.
Domination mappings were introduced in \cite{CEKVW18} in order to
simultaneously achieve {both of these desirable properties}.
Given 
a subset $S$ of $[n]$,
one can follow the~edges~of the domination graph $G$ to find a subset
$S'$ of $[m]$ so that every position in $S$ is dominated by some
position in $S'$. Then, using spreads as described in \cite{CEKV18},
one can produce a binary word 
$\xxx = (x_1,x_2,\ldots,x_m)$ 
such that 
$x_i = 0$ for all $i \in S'$. If $\varphi$ is a $G$-dominating
mapping from $\bits^m$ into $\cB(n,w)$, the transmitted word
$\yyy = \varphi(\xxx)$ satisfies $\wt(\yyy) \le w$ and 
$y_j = 0$ for all $j \in S$. Since $\varphi$ is injective,
$\xxx$ can be recovered from $\yyy$ at the receiver.
For more details on this encoding/decoding procedure,
we refer the reader to~\cite{CEKV18,CEKVW18}.

\looseness=-1
In the context of coding for high-performance interconnects,
the parameters $m$ and $w$ would be usually given, and 
one would like to use
an $(m,n,w)$-domination mapping with the lowest
possible $n$, in order to mi\-nimize the redundancy $n-m$ of the
encoding procedure. 
We note that although domination mappings were defined in~\cite{CEKVW18},
they were not studied therein. In particular, it is not clear when
such mappings exist.

\vspace{1.80ex}
\subsection{Our contributions}\vspace{-.72ex}

Our primary objective in this paper is to answer the following
question: for which parameters $m$, $n$, $w$, 
do $(m,n,w)$-domination mapping exist?
A secondary objective is to study the structure of such mappings
and provide explicit constructions for them.
We believe that, 
aside from their applications to coding~for~interconnects, 
these problems are also interesting in their
own right, from the combinatorial perspective.

One of our main results is the following theorem, which shows
that if the domain $\bits^m$ is sufficiently large with respect
to $w$, then domination mappings \emph{always} exist.

\begin{theorem}
\label{theorem-main}
For every $w \ge 1$, there is a constant $m_0(w)$ with the following
property: for all $m \ge m_0(w)$, if 
$|\cB(n,w)| \ge 2^m$, then an $(m,n,w)$-domination mapping exists.
\end{theorem}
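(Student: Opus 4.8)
The plan is to build the domination mapping greedily, by processing the right vertices $[n]$ one at a time and extending a partial mapping $\varphi$ so that at each stage the \emph{image} of $\varphi$ lies in $\cB(n,w)$ restricted to the coordinates processed so far. The key structural idea: fix a domination graph $G$ in which every right vertex $j$ has exactly one left neighbour $i(j)$, so the constraint ``$x_{i(j)}=0 \Rightarrow y_j=0$'' simply says that coordinate $y_j$ of $\varphi(\xxx)$ is a function that vanishes on the half-cube $\{x_{i(j)}=0\}$. Equivalently, we must write each output coordinate $y_j = x_{i(j)} \cdot f_j(\xxx)$ for some Boolean function $f_j$. Partition $[n]$ into blocks $N_1,\dots,N_m$ where $N_i$ is the set of right vertices assigned to left vertex $i$; on the half-cube $\{x_i=1\}$ (which has $2^{m-1}$ points) the coordinates in $N_i$ may be set freely, subject only to the \emph{global} weight budget $\wt(\yyy)\le w$. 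So the real combinatorial core is: can we injectively assign to the $2^m$ inputs $\xxx$ binary strings $\yyy$ of weight $\le w$, where the support of $\yyy$ is forced to lie in $\bigcup_{i:\,x_i=1} N_i$?

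First I would reduce the problem to a clean counting statement. For an input $\xxx$ with support $T=\supp(\xxx)\subseteq[m]$, the allowed images form the Hamming ball of radius $w$ inside the coordinate set $\bigcup_{i\in T} N_i$; call its size $\beta(T)$. We need a system of distinct representatives — really an injection from $\bits^m$ into $\cB(n,w)$ respecting these per-input constraints. By Hall's theorem (in the bipartite ``inputs vs.\ target words'' graph) such an injection exists iff for every set $\cX$ of inputs, the union of their allowed images has size $\ge |\cX|$. The worst case is $\cX$ = all inputs whose support is contained in some small set $U\subseteq[m]$; for such $\cX$ the union of allowed images is exactly $\cB$ of radius $w$ on the coordinate block $\bigcup_{i\in U} N_i$, which has size roughly $\binom{|\bigcup_{i\in U}N_i|}{\le w}$, while $|\cX| = 2^{|U|}$. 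So the existence hinges on choosing the block sizes $n_i := |N_i|$ (with $\sum n_i = n$ and $\binom{n}{\le w}\ge 2^m$) so that for every $U$, $\binom{\sum_{i\in U} n_i}{\le w} \ge 2^{|U|}$. Making the $n_i$ as equal as possible — each about $n/m$ — one gets $\sum_{i\in U} n_i \approx |U|\,n/m$, and since $n$ itself must be about $(w!\,2^m)^{1/w}$, the $w$-th power $(\,|U|n/m)^w$ beats $2^{|U|}$ precisely once $m$ (hence $n/m$) is large enough: this is where the hypothesis ``$m\ge m_0(w)$'' enters, and it is the step I expect to be the main obstacle — verifying the Hall condition uniformly over all subset sizes $|U|=1,\dots,m$, handling the boundary cases $|U|$ small (where $2^{|U|}$ is tiny and the bound is easy) and $|U|$ close to $m$ (where $\sum_{i\in U}n_i$ is close to $n$ and we use $\binom{n}{\le w}\ge 2^m$ directly) and the genuinely tight middle range.

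Having secured the Hall condition, the existence of the injection is immediate, and it automatically has the domination property for the star-like graph $G$ we fixed (every output coordinate in block $N_i$ vanishes whenever $x_i=0$, by construction of the allowed-image sets), and $G$ has no isolated right vertices since every block $N_i$ is nonempty once $n_i\ge1$ — which we can arrange as long as $m\le n$, itself a consequence of $\binom{n}{\le w}\ge 2^m$ for $w\ge1$. The only remaining wrinkle is to confirm we can pad or adjust so that $\sum n_i = n$ exactly with all $n_i\ge 1$; distributing the surplus $n-m$ arbitrarily among the blocks only enlarges the allowed-image sets, so it cannot hurt the Hall condition. This yields the theorem with an explicit, if not optimized, value of $m_0(w)$ coming from the middle-range estimate above.
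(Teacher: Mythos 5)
Your setup coincides with the paper's: you fix an equitable ``star'' domination graph (blocks $N_i$ of size about $n/m$), form the bipartite graph between $\bits^m$ and $\cB(n,w)$ in which $\xxx$ is joined to the words of weight at most $w$ supported on $\bigcup_{i\in\supp(\xxx)}N_i$ (this is exactly the paper's compatibility graph), and invoke Hall's theorem. But there is a genuine gap at the decisive step: you assert that ``the worst case'' for Hall's condition is $\cX=\{\xxx:\supp(\xxx)\subseteq U\}$ for a single $U\subseteq[m]$, and then you only discuss the inequality $\bigl|\cB\bigl(\sum_{i\in U}n_i,\,w\bigr)\bigr|\ge 2^{|U|}$ over the range of $|U|$. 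That reduction is unproved, and it is precisely the hard part. What one can prove cheaply (the paper's Closure Lemma) is that a violating set may be taken to be \emph{downward closed}, i.e.\ a union of many subcubes below an antichain of supports $T_1,\dots,T_k$; Hall's condition then requires $\bigl|\bigcup_j B(T_j)\bigr|\ge\bigl|\bigcup_j 2^{T_j}\bigr|$, and the single-subcube inequalities $|B(T)|\ge 2^{|T|}$ do \emph{not} formally imply this, because in the inclusion--exclusion the intersections satisfy $B(T)\cap B(T')=B(T\cap T')$ and the slack $|B(T)|-2^{|T|}$ is largest for mid-sized $T$: subtracting a ``too large'' intersection term can in principle destroy the inequality even when every individual term is fine. (Abstractly: from $|A_T|\ge|C_T|$ for all $T$, with both families closed under intersection along the lattice, one cannot conclude $|A_{T_1}\cup A_{T_2}|\ge|C_{T_1}\cup C_{T_2}|$.)

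This is exactly the difficulty the paper's Section~\ref{sec:proof} is built to overcome: starting from a minimal d-closed bad set, it runs an induction on $i$-balancedness, using the Removal Lemma, the fact that maximal-support words of a minimal bad set must have weight at least $w+1$ (and, under the numerical condition \eqref{eq:cond1}, that every low-weight word sits below a maximal-support word of weight at least $2w$), and finally a careful comparison $\Psi(Z)\le\Psi(Y)$ of ``additional neighbourhood'' counts to show the augmented set is still bad, forcing the contradiction $\cX=\cS$. None of this structure is present in your sketch, and the ``boundary cases / middle range'' analysis you anticipate addresses only single subcubes, which is the easy calculation. So the framework is right, but the core of the theorem --- verifying Hall's condition for arbitrary (equivalently, arbitrary downward-closed) subsets of $\bits^m$ --- is missing and cannot be dismissed by declaring subcubes to be the worst case without proof.
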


\looseness=-1
Thus, for sufficiently large $m$, whenever injections from $\bits^m$
into $\cB(n,w)$ exist at all, one of these injections is necessarily
an $(m,n,w)$-domination mapping. This is somewhat surprising; it is
certainly not true for small $m$. To prove \Tref{theorem-main}, we
introduce a certain bipartite graph with vertex set 
\mbox{$\bits^m \cup \cB(n,w)$},
which we call the \emph{compatibility graph}. We then use Hall's 
marriage theorem~\cite{Hall35} to show that the compatibility graph 
has a perfect matching, when $m$ is sufficiently large.
The fact that the conditions of Hall's marriage theorem 
are satisfied in this graph
is established in a~long and elaborate 
sequence of lemmas in Section\,\ref{sec:proof}.

\looseness=-1
The other contributions in this paper are
organized as follows.
We begin in the next section by establishing some
basic properties of domination graphs and domination
mappings. In particular, we consider certain restrictions
of a domination mapping $\varphi\!: \bits^m \to \cB(n,w)$ 
to a subset of the $m$ positions of its domain. We will show 
that all such restrictions are also domination mappings.
In Section\,\ref{sec:bounds}, we use some of these results
to derive necessary conditions for the existence of 
an $(m,n,w)$-domination mapping. In particular, we show
that \mbox{$n \ge 2m - w$} in any such mapping.
In Section\,\ref{sec:constructions}, we present several
explicit constructions of domination mappings. In particular,
we introduce the {product construction}; given any two
$(m_1,n_1,w_1)$-domination and $(m_2,n_2,w_2)$-domination mappings, 
this construction produces an 
$(m_1+m_2,n_1+n_2,w_1+w_2)$-domination mapping. We also
construct \emph{perfect} $(m,n,\kern-.5pt1)$-domination mappings 
with $n = 2^m\kern-1pt - 1$, 
so that \mbox{$|\cB(n,\kern-.5pt1)| = 2^m$}\linebreak
and $\varphi$ is a bijection. 
We furthermore describe another construction for $w = 2$ and $m$ odd;
it appears that~already in this case, the construction problem is
far from trivial.
In Section\,\ref{sec:algorithm}, we present 
a polynomial-time algorithm that, given any $m$, $n$, and $w$, 
determines whether an $(m,n,w)$-domination mapping exists 
for a~domination graph $G$ with an equitable degree distribution
(that is, the degree of all the left vertices in $G$ is either
$\trunc{n/m}$ or $\ceil{n/m}$).
To this end, we apply the K\"onig-Egev\'ary theorem~\cite{Konig}
to the bipartite compatibility graph in order to conclude that
the maximum matching and the \emph{maximum fractional matching}
in this graph have the same size. 
This leads to a linear program with an exponential number of
variables, but~we~use symmetries of the compatibility graph
to show that this problem can be solved in polynomial time.
%

One consequence of our constructions in Section\,\ref{sec:constructions}
is that the necessary conditions in Section\,\ref{sec:bounds} are also
sufficient for all $m \le 3w$. This resolves the existence problem
for $(m,n,w)$-domination mappings for small values of $m$. As 
a consequence of \Tref{theorem-main}, this problem is also resolved for 
large $m$. For intermediate values of $m$, we
can determine whether an $(m,n,w)$-domination mapping exists 
in polynomial time using the algorithm of Section\,\ref{sec:algorithm}
--- albeit only for an equitable domination graph.
We believe, however, that~the~decision produced by the polynomial-time
algorithm of Section\,\ref{sec:algorithm} holds unconditionally
(cf.~Conjecture\,\ref{Conjecture1}).

\vspace{2.70ex}
\section{Basic Properties of Domination Mappings}
\label{sec:properties}

\looseness=-1
We begin with some basic properties and notation for domination graphs.
Recall that, by definition, a domination graph is a bipartite graph
$G = \bigl([m] \cup [n], E\bigr)$ with no 
isolated vertices on the right side of~the~bipart\-ition.
The~fol\-lowing lemma shows that $G$ 
cannot have isolated vertices on the left side either. 

\begin{lemma}
\label{no-isolated}
Let $G = \bigl([m] \cup [n], E\bigr)$ be a domination graph.
If there is a vertex of degree zero in $[m]$, 
then a~$G$-domination mapping 
does not exist.
\vspace{-0.54ex}
\end{lemma}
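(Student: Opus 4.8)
The plan is to argue by contradiction: assume that $i \in [m]$ has degree zero in $G$ and yet some injection $\varphi\!: \bits^m \to \cB(n,w)$ is $G$-dominating, and then produce two distinct inputs with the same image, contradicting injectivity. So the whole argument is just a matter of choosing the right pair of inputs to feed to $\varphi$.

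First I would extract the one structural fact needed. Because $G$ is a domination graph, it has no isolated right vertices, so every position $j \in [n]$ lies on at least one edge $(k,j) \in E$; and because $i$ has degree zero, every such $k$ satisfies $k \neq i$. In words: every right vertex is dominated by \emph{something}, but never by $i$.

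Next I would evaluate $\varphi$ at two carefully chosen points of $\bits^m$. Taking $\xxx = (0,0,\ldots,0)$ and writing $\varphi(\xxx) = (y_1,\ldots,y_n)$: for each $j$, pick any edge $(k,j) \in E$; then $x_k = 0$, so the domination property forces $y_j = 0$, and hence $\varphi(0,\ldots,0) = (0,\ldots,0)$. Now take $\xxx'$ to be the word with a single $1$ in coordinate $i$ and $0$ in all other coordinates, and write $\varphi(\xxx') = (y_1',\ldots,y_n')$: for each $j$ pick an edge $(k,j) \in E$ with $k \neq i$ (possible by the previous paragraph); then $x_k' = 0$, so again $y_j' = 0$, and hence $\varphi(\xxx') = (0,\ldots,0)$ as well.

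Finally, since $i \in [m]$ the words $\xxx$ and $\xxx'$ are distinct, yet $\varphi(\xxx) = \varphi(\xxx')$, contradicting injectivity; therefore no $G$-domination mapping exists. I do not expect any real obstacle here — the only idea is that the domination constraints simply cannot ``see'' the coordinate $i$, so flipping it on in the all-zero word still collapses to the all-zero image. (In fact the same reasoning shows $\varphi$ must send \emph{every} word supported on the set of degree-zero left vertices to the all-zero word, so even a single degree-zero left vertex already destroys injectivity.)
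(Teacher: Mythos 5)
Your proof is correct and is essentially identical to the paper's: both show that $\varphi(\zero) = \zero$ because every right vertex is dominated, and that $\varphi(\eee_i) = \zero$ because every right vertex is dominated by some left vertex other than the degree-zero vertex $i$, contradicting injectivity. No differences worth noting.
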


\begin{proof}
\looseness=-1
Assume to the contrary that $\varphi\!: \bits^m \to \cB(n,w)$ 
is a~$G$-domination mapping. Since every posi\-tion in $[n]$ is
dominated by some position in $[m]$, we have 
$\varphi(\zero) = \zero$, where $\zero$ denotes 
the all-zero word of the appropriate length.
%
Suppose that the vertex $i \in [m]$ has degree zero, and 
let $\eee_i \in \bits^m$ be 
the~word~of weight one with the 
single nonzero in position $i$. Then $\varphi(\eee_i) = \zero$
since 
if $\deg(i) = 0$,
every posi\-tion in $[n]$ is dominated by a position in $[m]$
\emph{other than $i$}. 
Thus $\varphi(\eee_i) = \varphi(\zero)$,~con\-tradicting 
the fact that $\varphi$ is injective.\hspace*{-0.36ex}
\end{proof}

\looseness=-1
In view of \Lref{no-isolated}, we henceforth assume w.l.o.g.\ that
the degree of every vertex in a domination graph is at least one.
In fact, we can furthermore assume 
that the degree of all the vertices on the right side of 
the bipartition is \emph{exactly one}. 
Indeed, let $G = \bigl([m] \cup [n], E\bigr)$ be a domination graph, 
let $\varphi$ be a $G$-domination mapping, and suppose that
a vertex $j \in [n]$ has degree $\delta > 1$. We construct $G'$ from $G$
by removing,~arbitrar\-ily, some $\delta-1$ of the $\delta$ edges
incident upon 
$j$. Then, obviously, $\varphi$ is 
$G'$-dominating as well~(cf.~\Eref{Example1}).
It follows that any $(m,n,w)$-domination mapping is $G$-dominating
for some graph $G = \bigl([m] \cup [n], E\bigr)$~with $n$ edges and
$\deg(j) = 1$ for all $j \in [n]$.
Up to a permutation 
of $[n]$, any such domination graph $G$ is uniquely
described by the degrees $\delta_1,\delta_2,\ldots,\delta_m$
of its left vertices. We say that 
$(\delta_1,\delta_2,\ldots,\delta_m)$ is the 
\emph{\bfit degree sequence\linebreak of $G$}, and further assume
(w.l.o.g., up to a permutation of $[m]$) that
$\delta_1 \le \delta_2 \le\cdots\le\delta_m$.
With this assumption, a domination graph $G$ can be also
specified in terms of its \emph{\bfit degree distribution}
$(d_1,d_2,\ldots,d_\Delta)$, where $d_i$ is the number of left
vertices of degree $i$ and 
\smash{$\Delta \deff \max\{\delta_1,\delta_2,\ldots,\delta_m\}$}.
Clearly,
\be{degree-distribution}
d_1 + d_2 + \cdots + d_\Delta \, = \, m
\hspace{4.5ex}\text{and}\hspace{4.5ex}
d_1 + 2d_2 + \cdots + \Delta d_\Delta \, = \, n
\ee
A domination~graph is said to be
\emph{\bfit equitable} if
$\trunc{n/m} = \delta_1 \le \delta_2\le \cdots\le\delta_m = \ceil{n/m}$.
Equitable domination graphs play an important role in 
the study of domination mappings. In fact, we conjecture as follows.
\begin{conjecture}
\label{Conjecture1}
\looseness=-1
An $(m,n,w)$-domination mapping exists if and only if
there exists 
a mapping from $\{0,\kern-1pt1\}^m$ to $\cB(n,\kern-1pt w)$ 
that is $G$-dominating
for an equitable graph~$G$.
\end{conjecture}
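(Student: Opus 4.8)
The statement in question is Conjecture~\ref{Conjecture1}, which asserts that the existence of an $(m,n,w)$-domination mapping is equivalent to the existence of one whose domination graph is equitable. Since this is explicitly labeled a conjecture, I cannot offer a complete proof; instead I will describe the strategy that I believe gives the most plausible route, and identify the genuine obstruction.

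\medskip
\noindent\textbf{Proof strategy for Conjecture~\ref{Conjecture1}.}
One direction is trivial: if a $G$-dominating mapping exists for an equitable $G$, then in particular an $(m,n,w)$-domination mapping exists. The substance is the forward direction: given \emph{any} domination graph $G = \bigl([m]\cup[n],E\bigr)$ with degree sequence $\delta_1\le\cdots\le\delta_m$ and a $G$-dominating mapping $\varphi$, we must produce an equitable graph $G'$ on the same parameters $(m,n)$ and a $G'$-dominating mapping $\varphi'$. The plan is to perform a sequence of local ``rebalancing'' moves on the degree sequence, each of which shifts one unit of degree from the largest-degree left vertex to the smallest-degree left vertex (replacing $(\delta_1,\delta_m)$ by $(\delta_1+1,\delta_m-1)$), while simultaneously modifying the mapping so that the $G'$-domination property is preserved. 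Since each such move strictly decreases $\sum_i \delta_i^2$ (a majorization step) and cannot go below the equitable configuration, finitely many moves terminate at an equitable graph; so the whole task reduces to a single \emph{atomic rebalancing lemma}: if a $G$-dominating $\varphi$ exists and $\delta_m \ge \delta_1 + 2$, then a $G'$-dominating $\varphi'$ exists where $G'$ has the rebalanced degree sequence.

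\medskip
\noindent\textbf{Carrying out the atomic step.}
Relabel so that the left vertex of degree $\delta_m$ is ``vertex $m$'' and the one of degree $\delta_1$ is ``vertex $1$'', and let $R_m, R_1 \subseteq [n]$ be the sets of right vertices they dominate, with $|R_m|=\delta_m$, $|R_1|=\delta_1$. In $G'$ we want to move one right vertex, say $j^\star \in R_m$, to be dominated by vertex $1$ instead. For the mapping: for $\xxx\in\bits^m$ with $y=\varphi(\xxx)$, the only constraint that changes is the one involving coordinate $j^\star$. We need a new injective $\varphi'\colon\bits^m\to\cB(n,w)$ with $x_1 = 0 \Rightarrow y'_{j^\star}=0$ and all old implications retained. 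The natural idea is to ``swap the roles'' of coordinates $1$ and $m$ on a suitable sublattice of $\bits^m$ — or, better, to invoke the restriction results promised in Section~\ref{sec:properties} (every restriction of a domination mapping to a coordinate subset is again a domination mapping) to peel off coordinates $1$ and $m$, reducing to a $2$-coordinate problem glued to the untouched part, and then to re-encode the $4$ cosets indexed by $(x_1,x_m)$ so that the weight budget $w$ is respected. Concretely, one tries to show the four fibers can be packed into $\cB(n,w)$ with the rebalanced support pattern; here the product construction of Section~\ref{sec:constructions} and the necessary condition $n\ge 2m-w$ should be the tools that make the bookkeeping close.

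\medskip
\noindent\textbf{The main obstacle.}
The difficulty — and the reason this remains a conjecture — is that the atomic rebalancing step need \emph{not} preserve the weight constraint in an obvious way: shifting domination from a high-degree vertex to a low-degree vertex forces more right coordinates to be ``switched off together'' by coordinate $1$ and fewer by coordinate $m$, which changes the profile of achievable weight vectors, and there is no a~priori reason a valid re-encoding into $\cB(n,w)$ always exists after the shift. Equivalently, in the language of the \emph{compatibility graph} used for Theorem~\ref{theorem-main}, one would need a Hall-type argument showing that the compatibility graph for the equitable $G'$ has a perfect matching whenever the one for $G$ does, and the combinatorics of $\cB(n,w)$ do not transparently respect this comparison. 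A possible way around it is to bypass the step-by-step argument entirely and instead prove directly, as in the proof of Theorem~\ref{theorem-main}, that Hall's condition holds for the equitable compatibility graph under the sole hypothesis $|\cB(n,w)|\ge 2^m$ together with the weaker necessary conditions of Section~\ref{sec:bounds} — but extending the $m\ge m_0(w)$ argument down to \emph{all} valid $m$ is exactly where the current techniques fall short, which is why only the conjecture is stated here.
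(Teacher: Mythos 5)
You have correctly recognized that the statement is Conjecture~\ref{Conjecture1}: the paper itself offers no proof of it, so there is nothing to compare your argument against, and your refusal to claim a complete proof is the right call. Both halves of your assessment are accurate. The ``if'' direction is indeed trivial (a mapping that is $G$-dominating for an equitable $G$ is in particular an $(m,n,w)$-domination mapping), and the ``only if'' direction is the open content. Your rebalancing strategy --- majorization steps $(\delta_1,\delta_m)\mapsto(\delta_1+1,\delta_m-1)$ driven by the strict decrease of $\sum_i\delta_i^2$, reduced to a single atomic lemma --- is a reasonable framing, and you correctly locate the obstruction: nothing in the paper's toolkit (the shortening of \Lref{derived-maps}, the product construction, or the bound $n\ge 2m-w$) yields the re-encoding needed after shifting one dominated coordinate from a high-degree to a low-degree left vertex, and Hall's condition for the equitable compatibility graph is only established in Section~\ref{sec:proof} for $m\ge m_0(w)$.

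For context on how your sketch sits relative to the paper's evidence: the paper's support for the conjecture is exactly the alternative route you mention at the end --- the existence proof of \Tref{theorem-main} is carried out on an equitable (indeed, in the final estimates, regular) domination graph, and the polynomial-time algorithm of Section~\ref{sec:algorithm} decides existence only for equitable graphs, which is why the authors conjecture that this restriction loses nothing. One small supporting observation you could have added: \Lref{lem:optimal_domination} shows that any mapping meeting the bound $n=2m-w$ is forced to have degree distribution $d_1=w$, $d_2=m-w$, i.e.\ an equitable graph, so at the extreme of \Lref{lem:tight} the conjecture holds automatically. But the general ``only if'' direction remains open, and your proposal neither proves it nor contains a false step --- it is an honest strategy sketch whose missing piece coincides with the conjecture itself.
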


We now consider certain basic properties of $(m,n,w)$-domination mappings.
The following easy lemma shows that the existence problem for such 
mappings is ``monotonic'' in both $n$ and $w$.
\begin{lemma}
\label{monotonic}
If 
an $(m,n,w)$-domination mapping exists, then there also exist
an $(m,n+1,w)$-domination mapping 
and an $(m,n,w+1)$-domination mapping.
\vspace{-0.54ex}
\end{lemma}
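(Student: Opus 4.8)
The plan is to prove both claims by exhibiting explicit modifications of a given $G$-dominating map $\varphi\!:\bits^m\to\cB(n,w)$. Let $G = \bigl([m]\cup[n],E\bigr)$ be the domination graph witnessing that $\varphi$ is an $(m,n,w)$-domination mapping.

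For the $(m,n,w{+}1)$-domination mapping, nothing needs to be done to $\varphi$ at all: since $\cB(n,w)\subseteq\cB(n,w{+}1)$, the same map $\varphi$, viewed as a map into $\cB(n,w{+}1)$, is still injective, still lands in the larger Hamming ball, and still satisfies the domination property with respect to the very same graph $G$ (which still has no isolated right vertices). So the only thing to verify is this chain of trivialities, which I would state in one or two sentences.

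For the $(m,n{+}1,w)$-domination mapping, the idea is to append a new coordinate in position $n{+}1$ that is permanently switched off. Concretely, define $\psi\!:\bits^m\to\cB(n{+}1,w)$ by $\psi(\xxx) = \bigl(\varphi(\xxx),0\bigr)$, i.e. $\psi(\xxx)_j = \varphi(\xxx)_j$ for $j\in[n]$ and $\psi(\xxx)_{n+1} = 0$. Then $\psi$ is injective because $\varphi$ is, and $\wt(\psi(\xxx)) = \wt(\varphi(\xxx))\le w$, so $\psi$ indeed maps into $\cB(n{+}1,w)$. For the domination graph, I would take $G' = \bigl([m]\cup[n{+}1],E'\bigr)$ where $E' = E \cup \{(1,n{+}1)\}$ (attaching the new right vertex $n{+}1$ to, say, left vertex $1$, which has degree at least one by \Lref{no-isolated}, so $G'$ has no isolated right vertices). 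The edges in $E$ still impose their constraints on coordinates $1,\dots,n$ of $\psi(\xxx)$, which agree with $\varphi(\xxx)$, so those are inherited from $\varphi$ being $G$-dominating; and the new edge $(1,n{+}1)$ requires that $x_1 = 0$ implies $\psi(\xxx)_{n+1} = 0$, which holds vacuously since $\psi(\xxx)_{n+1}$ is always $0$. Hence $\psi$ is $G'$-dominating.

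There is no real obstacle here — the statement is a warm-up lemma — but the one point that warrants a sentence of care is making sure $G'$ has no isolated \emph{right} vertex, which is why the new edge $(1,n{+}1)$ is added rather than leaving $n{+}1$ isolated; and implicitly one also wants $G'$ to have no isolated left vertices, which holds because $E\subseteq E'$ and $G$ already had none by \Lref{no-isolated}. I would present the proof in two short paragraphs, one per claim, each just defining the map, the graph, and checking injectivity, the weight bound, and the domination condition.
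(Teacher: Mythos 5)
Your proposal is correct and follows essentially the same route as the paper: appending a constant zero coordinate, $\varphi'(\xxx)=\bigl(\varphi(\xxx),0\bigr)$, for the $(m,n{+}1,w)$ case, and simply noting $\cB(n,w)\subseteq\cB(n,w{+}1)$ for the other. The only difference is that you spell out the new edge $(1,n{+}1)$ in the domination graph (so the new right vertex is not isolated), a detail the paper leaves implicit under ``clearly.''
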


\begin{proof}
\looseness=-1
Given an $(m,n,w)$-domination map $\varphi$, we construct
an injective map $\varphi'\!: \bits^m \to \cB(n+1,w)$ 
as follows: $\varphi'(\xxx) = \bigl(\varphi(\xxx),0)$,
where $(\cdot,\cdot)$ stands for string concatenation.
Clearly, $\varphi'$ is an $(m,n+1,w)$-domination mapping.
The other claim 
follows trivially from
the fact that $\cB(n,w) \subseteq \cB(n,w+1)$.
\end{proof}

We next show that every $G$-domination mapping gives rise to
a multitude of 
\emph{derived domination mappings} corresponding to 
subgraphs of $G$ induced by 
subsets of $[m]$.

\begin{lemma}
\label{derived-maps}
Let $\varphi\!: \bits^m \to \cB(n,w)$ be a $G$-domin\-a\-tion
mapping. Fix an arbitrary left vertex $i$ of $G$, and let $G'$
be the induced subgraph of $G$ obtained~by~removing $i$ and
all 
vertices adjacent to $i$. Then $\varphi$
induces a $G'$-domination mapping $\varphi'$ from 
$\bits^{m-1}$ to\/ $\cB(n-\delta,w)$, where $\delta = \deg(i)$.
\vspace{-0.54ex}
\end{lemma}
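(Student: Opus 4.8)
The plan is to show that if we set the $i$-th coordinate of the domain to $0$, the image necessarily lands in a sub-Hamming-ball on the $n-\delta$ coordinates not adjacent to $i$, and that the induced map there is still injective and still dominates via the induced graph.

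First I would define $\varphi'$ precisely. Relabel so that $i = m$ and so that the $\delta$ right vertices adjacent to $i$ occupy positions $n-\delta+1, n-\delta+2, \ldots, n$; write $G' = G - i - N(i)$, a bipartite graph on $[m-1] \cup [n-\delta]$. For $\xxx' = (x_1,\ldots,x_{m-1}) \in \bits^{m-1}$, consider $\yyy = \varphi(x_1,\ldots,x_{m-1},0) \in \cB(n,w)$. Since $x_m = 0$ and each of the positions $n-\delta+1,\ldots,n$ is adjacent to $m$ in $G$, the $G$-domination property forces $y_{n-\delta+1} = \cdots = y_n = 0$. Hence $\yyy$ has the form $(\yyy'', 0,\ldots,0)$ with $\yyy'' \in \bits^{n-\delta}$, and $\wt(\yyy'') = \wt(\yyy) \le w$, so $\yyy'' \in \cB(n-\delta, w)$. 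Define $\varphi'(\xxx'') = \yyy''$; this is a well-defined map $\bits^{m-1} \to \cB(n-\delta,w)$.

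Next I would check injectivity and the domination property. Injectivity: if $\varphi'(\xxx'') = \varphi'(\zzz'')$ then the full images $\varphi(\xxx'',0)$ and $\varphi(\zzz'',0)$ agree (both are the common value padded by the same block of $\delta$ zeros), so $\xxx'' = \zzz''$ because $\varphi$ is injective. Domination: let $(a,b) \in E(G')$ with $a \in [m-1]$, $b \in [n-\delta]$; by construction of $G'$ this edge is also an edge of $G$. If $x_a = 0$ in $\xxx''$, then applying the $G$-domination property of $\varphi$ to the word $(\xxx'',0)$ and the edge $(a,b)$ gives $y_b = 0$, i.e. $y''_b = 0$. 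Thus $\varphi'$ is $G'$-dominating. Finally I would note that $G'$ has no isolated right vertices: every $b \in [n-\delta]$ has, in $G$, at least one neighbour, and that neighbour is not $i = m$ (since $b \notin N(i)$), so it survives in $G'$. (One may also invoke Lemma~\ref{no-isolated} applied to $G$ to see $G'$ has no isolated left vertices, but this is not needed for the statement.)

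The only subtle point — and the one I would state carefully rather than the routine verifications — is the reduction ``relabel so that $N(i)$ occupies the last $\delta$ positions.'' Implicitly this uses the earlier normalization that each right vertex has degree exactly one, so that the neighbourhoods of distinct left vertices are disjoint; without that, removing ``all vertices adjacent to $i$'' together with $i$ could delete right vertices still dominated by other left vertices and the bookkeeping of which coordinates are forced to zero would be murkier. Since the excerpt has already fixed $\deg(j) = 1$ for all $j \in [n]$ w.l.o.g., $N(i)$ is exactly the set of right vertices of the unique degree-one type attached to $i$, the deleted right coordinates are precisely those forced to $0$, and the argument above goes through cleanly. Everything else is a direct unwinding of Definition~\ref{domination-def}.
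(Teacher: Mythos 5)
Your proposal is correct and follows essentially the same route as the paper: extend $\xxx'$ by a zero in position $i$, observe that the $G$-domination property forces zeros in the $\delta$ coordinates adjacent to $i$, puncture those coordinates, and verify injectivity and $G'$-domination directly (the paper leaves these verifications as "easy to see," which you spell out). Your remark about the degree-one normalization matches the paper's standing w.l.o.g. assumption, so there is no gap.
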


\begin{proof}
\looseness=-1
$\!$The mapping $\varphi'\!: \bits^{m-1} \to \cB(n-\delta,w)$
is derived from $\varphi$ as follows. Given an arbitrary~word
$\xxx' = (x_1,x_2,\ldots,x_{m-1})$ in $\bits^{m-1}$,
we first extend it to 
$\xxx = (x_1,x_2,\ldots,x_{i-1},0,x_i,\ldots,x_{m-1}) \,{\in}\, \bits^m$.
Let $\{j,j+1,\ldots,j+\delta-1\} \,{\in}\, [n]$ be the set of positions dominated
by the vertex $i$ (where we have assumed w.l.o.g.\ that these positions are
consecutive). Then $\yyy = \varphi(\xxx)$ is of the form\vspace{-0.72ex}
$$
\yyy 
\, = \,
\bigl(y_1,y_2,\ldots,y_{j-1},
\underbrace{0,0,\ldots,0}_{\text{\small$\delta$~zeros}},
y_{j+\delta},\ldots,y_{n}),
\vspace{-1.26ex}
$$
and we set $\varphi'(\xxx') = (y_1,y_2,\ldots,y_{j-1},y_{j+\delta},\ldots,y_{n})$.
It is easy to see that the mapping $\varphi'$ thereby defined~is an
injection from $\bits^{m-1}$ to $\cB(n-\delta,w)$, 
and $\varphi'$ is $G'$-dominating.
\end{proof}

The construction of the derived mapping $\varphi'$ in \Lref{derived-maps}
is akin to the process of shortening linear codes in coding theory~\cite{MWS}. 
We restrict both the domain and the range of $\varphi$ to subsets thereof
consisting of those words that contain zeros in specified positions,
then puncture out these all-zero positions to obtain $\varphi'$.
Iterating this construction, a single $(m,n,w)$-domination mapping
gives rise to $2^m$ derived domination mappings (some of which
may be isomorphic or trivial) corresponding to all subsets of $[m]$.

An immediate consequence of \Lref{derived-maps} is that the existence 
problem for domination mappings is also monotonic in $m$. That is,
for all $m \ge 2$, if an $(m,n,w)$-domination mapping exists, we can use
\Lref{derived-maps} (in conjunction with \Lref{monotonic})
to construct an $(m-1,n,w)$-domination mapping.
In view of this~monotonic\-ity, it is natural to define the 
following parameters:\vspace{-0.54ex}
\begin{align*}
\mu(n,w) &~\deff~
\max\bigl\{m \in \Z ~:~ \text{an $(m,n,w)$-domination mapping exists} \bigr\}
\\[-0.27ex]
\nu(m,w) &~\deff~
\min\bigl\{n \in \Z ~:~ \text{an $(m,n,w)$-domination mapping exists} \bigr\}
\end{align*}
We say that an $(m,n,w)$-domination mapping is \emph{\bfit optimal}
if $n = \nu(m,w)$. 
%
We next
discuss necessary conditions for the existence
of an $(m,n,w)$-domination mapping, which can be regarded as bounds
on~$\nu(m,w)$. 

\vspace{3ex}
\section{Necessary Conditions}
\label{sec:bounds}

An $(m,n,w)$-domination mapping is injective by definition, so the 
size of its domain $\bits^m$ cannot~exceed the size of its range $\cB(n,w)$.
While trivial, this necessary condition is 
fundamental; we state it as follows.
\begin{lemma}
\label{lem:sum_cond}
For any $(m,n,w)$-domination mapping, we have
\be{cardinality-condition}
2^m 
\leq\: \sum_{j=0}^w \binom{n}{j}
\ee
\end{lemma}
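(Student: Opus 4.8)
The plan is to compare the cardinalities of the domain and the range of $\varphi$. By \Dref{domination-def}, an $(m,n,w)$-domination mapping $\varphi\!: \bits^m \to \cB(n,w)$ is in particular injective, so $|\bits^m| \le |\cB(n,w)|$. The left-hand side equals $2^m$. For the right-hand side, I would partition $\cB(n,w)$ according to Hamming weight: for each $j$ with $0 \le j \le w$, there are exactly $\binom{n}{j}$ words $\yyy \in \bits^n$ with $\wt(\yyy) = j$; these $w+1$ classes are pairwise disjoint, and their union is precisely $\cB(n,w)$. Hence $|\cB(n,w)| = \sum_{j=0}^{w}\binom{n}{j}$, and combining the two facts yields \eqref{cardinality-condition}.

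There is essentially no obstacle here; the lemma is immediate from injectivity alone. The only point worth recording is that the argument uses nothing about $\varphi$ beyond the fact that it is an injection into the Hamming ball, so the bound is in general far from tight. The substantive work of this section will be to sharpen \eqref{cardinality-condition} — for instance to the claimed inequality $n \ge 2m - w$ — by exploiting the derived-mapping construction of \Lref{derived-maps}, which allows one to restrict an $(m,n,w)$-domination mapping to fewer input coordinates and thereby extract additional structural constraints relating $n$, $m$, and $w$.
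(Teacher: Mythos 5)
Your proof is correct and is essentially the argument the paper itself gives (indeed the paper treats the lemma as immediate from injectivity, with $|\cB(n,w)| = \sum_{j=0}^{w}\binom{n}{j}$ obtained by partitioning the ball by Hamming weight). Nothing further is needed.
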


\looseness=-1
In conjunction with the shortening procedure in \Lref{derived-maps},
the trivial necessary condition of \Lref{lem:sum_cond}~leads to a 
considerably more elaborate bound on the parameters 
of an $(m,n,w)$-domination mapping.
\begin{lemma}
\label{lemma6}
Let $G = \bigl([m] \cup [n], E\bigr)$ be a domination graph with
degree distribution $(d_1,d_2,\ldots,d_\Delta)$. Then for any
$G$-domination mapping $\varphi\!: \bits^m \to \cB(n,w)$, we have
\be{bound-general}
m 
~\le\:
\hspace{-.36ex}\min_{t_1,t_2,\ldots,t_\Delta}\hspace{-.36ex}
\left\{
(t_1\! + t_2 + \cdots + t_\Delta) \:+~ 
\log_2 \sum_{j=0}^w\! \binom{n - t_1-2t_2-\cdots-\Delta t_\Delta}{j}
\right\}
\ee
where the minimum is taken over all
nonnegative integers\/ $t_1,t_2,\ldots,t_\Delta$ with\,
$t_i \le d_i$\, for $i = 1,2,\ldots,\Delta$.
\vspace{-0.54ex}
\end{lemma}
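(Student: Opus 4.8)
The plan is to iterate the shortening procedure of \Lref{derived-maps} and then apply the trivial counting bound of \Lref{lem:sum_cond} to the resulting domination mapping. Fix nonnegative integers $t_1,t_2,\ldots,t_\Delta$ with $t_i \le d_i$ for each $i$. Among the $m$ left vertices of $G$, there are $d_i$ vertices of degree exactly $i$; select $t_i$ of them for each $i$, and let $I \subseteq [m]$ be the set of all $t_1 + t_2 + \cdots + t_\Delta$ vertices selected. I would apply \Lref{derived-maps} repeatedly, once for each vertex in $I$: each time we remove a chosen left vertex $i$ together with the $\deg(i)$ right vertices it dominates, the number of left vertices drops by one and the number of right vertices drops by $\deg(i)$, while the radius $w$ is unchanged and the shortened map remains a domination mapping. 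A subtlety worth noting explicitly is that removing one left vertex does not change the degrees of the \emph{other} left vertices in the induced subgraph, since in a domination graph (after the reduction to $\deg(j)=1$ for all $j\in[n]$) the only right vertices incident to $i$ are the ones dominated solely by $i$; hence the vertices remaining to be processed still have their original degrees, and the iteration is well-defined regardless of the order in which we remove the vertices of $I$.

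After processing all of $I$, we obtain a $G'$-domination mapping $\varphi'\!: \bits^{m'} \to \cB(n',w)$, where
\[
m' \, = \, m - (t_1 + t_2 + \cdots + t_\Delta)
\qquad\text{and}\qquad
n' \, = \, n - (t_1 + 2t_2 + \cdots + \Delta t_\Delta),
\]
the second equality because each removed vertex of degree $i$ takes $i$ right vertices with it, and we removed $t_i$ such vertices for each $i$. Now \Lref{lem:sum_cond} applied to $\varphi'$ gives $2^{m'} \le \sum_{j=0}^{w} \binom{n'}{j}$, i.e.
\[
m - (t_1 + t_2 + \cdots + t_\Delta)
\, \le \,
\log_2 \sum_{j=0}^{w} \binom{n - t_1 - 2t_2 - \cdots - \Delta t_\Delta}{j},
\]
and rearranging yields exactly the bracketed expression in \eq{bound-general} for this particular choice of $t_1,\ldots,t_\Delta$. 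Since the inequality $m \le (t_1+\cdots+t_\Delta) + \log_2\sum_{j=0}^w \binom{n-t_1-2t_2-\cdots-\Delta t_\Delta}{j}$ holds for every admissible tuple $(t_1,\ldots,t_\Delta)$, it holds for the tuple achieving the minimum, which is precisely \eq{bound-general}.

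I do not anticipate a serious obstacle here; the argument is essentially bookkeeping layered on top of \Lref{derived-maps}. The one point that requires a little care, and which I would state carefully rather than wave at, is the claim that the degrees of the not-yet-removed left vertices are unaffected by earlier removals, so that ``there are still $d_i - (\text{already used})$ vertices of degree $i$ available'' and the count $n' = n - \sum_i i\,t_i$ is correct; this is where the normalization $\deg(j) = 1$ for all $j \in [n]$ (justified in Section~\ref{sec:properties}) is used. One should also remark that if $n' < 0$ formally, the constraint $t_i \le d_i$ together with \eq{degree-distribution} prevents this, since $\sum_i i\,t_i \le \sum_i i\,d_i = n$; and the binomial sum is interpreted in the usual way (with $\binom{n'}{j} = 0$ for $j > n'$), so the right-hand side of \eq{bound-general} is always well-defined. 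With these remarks in place the proof is complete.
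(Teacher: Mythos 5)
Your argument is correct and is essentially the same as the paper's proof: iterate \Lref{derived-maps} to remove $t_i$ left vertices of each degree $i$ (yielding $m' = m - \sum_i t_i$ and $n' = n - \sum_i i\,t_i$) and then apply \Lref{lem:sum_cond} to the shortened mapping. Your additional remarks about the degrees of unremoved vertices being unaffected and about $n' \ge 0$ are fine but just make explicit what the paper leaves implicit.
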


\begin{proof}
Given $t_1,t_2,\ldots,t_\Delta$, we 
invoke \Lref{derived-maps}
iteratively to obtain a domination graph $G'$ with degree~distribution
$(d_1\kern-1pt-t_1,d_2\kern-1pt-t_2,\ldots,d_\Delta\kern-1pt-t_\Delta)$ 
and a $G'$-domination mapping $\varphi'$ with parameters 
$(m'\kern-1pt,n'\kern-1pt,w)$, 
where\linebreak
$m' = m - (t_1\kern-1pt + t_2\kern-1pt + \cdots + t_\Delta)$
and 
$n' = n - t_1-2t_2-\cdots-\Delta t_\Delta$.
Applying \Lref{lem:sum_cond} to $\varphi'$
establishes~\eq{bound-general}.\linebreak 
\end{proof}

\vspace{-1.26ex}
Observe that the minimization in \eq{bound-general} can be reduced
to the minimum of at most $m$ terms, parametrized by 
$s = t_1\! + t_2 + \cdots + t_\Delta$ with $s \le m-1$.
The following lemma is an important special case of \Lref{lemma6}.
This lemma shows, in particular, that the trivial necessary condition
of \Lref{lem:sum_cond} is, in general, not sufficient.\vspace{0.36ex}

\begin{lemma}
\label{lem:tight}
For any $(m,n,w)$-domination mapping, we have $n \ge 2m - w$.
\vspace{-0.54ex}
\end{lemma}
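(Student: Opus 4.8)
The plan is to extract from \Lref{lemma6} one clean inequality by making a single well-chosen setting of the shortening parameters $t_1,\dots,t_\Delta$, and then to combine it with the obvious lower bound on $n$ coming from the degree distribution. First I would normalise the setup: fix an $(m,n,w)$-domination mapping $\varphi$; by \Lref{no-isolated} and the normalisation discussed immediately after it, we may assume $\varphi$ is $G$-dominating for a domination graph $G=\bigl([m]\cup[n],E\bigr)$ in which every right vertex has degree $1$ and every left vertex has degree at least $1$, with degree distribution $(d_1,d_2,\dots,d_\Delta)$ satisfying \eq{degree-distribution}. Put $k\deff d_1$, the number of left vertices of degree exactly $1$. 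Since the other $m-k$ left vertices have degree $\ge 2$, the weighted identity in \eq{degree-distribution} gives
\[
n \;=\; \sum_{i=1}^{\Delta} i\,d_i \;\ge\; d_1 + 2\sum_{i=2}^{\Delta} d_i \;=\; k + 2(m-k) \;=\; 2m - k ,
\]
so it suffices to prove that $k \le w$.

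For that I would apply \Lref{lemma6} with $t_1 = 0$ and $t_i = d_i$ for $2 \le i \le \Delta$ (admissible, since $0 \le t_i \le d_i$ throughout). With this choice $t_1+\dots+t_\Delta = \sum_{i=2}^{\Delta} d_i = m-k$, while, using $\sum_i i\,d_i = n$,
\[
n - t_1 - 2t_2 - \cdots - \Delta t_\Delta \;=\; n - \sum_{i=2}^{\Delta} i\,d_i \;=\; n - (n - d_1) \;=\; k .
\]
Hence \eq{bound-general} becomes $m \le (m-k) + \log_2\sum_{j=0}^{w}\binom{k}{j}$, i.e.\ $2^{k} \le \sum_{j=0}^{w}\binom{k}{j}$. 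Concretely, this choice corresponds to shortening $\varphi$ via \Lref{derived-maps} at every left vertex of degree $\ge 2$: because all right vertices have degree $1$, these shortenings leave the degrees of the surviving left vertices unchanged, so what remains is a $(k,k,w)$-domination mapping, to which \Lref{lem:sum_cond} applies directly.

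Finally, if $k > w$ then $\sum_{j=0}^{w}\binom{k}{j} < \sum_{j=0}^{k}\binom{k}{j} = 2^{k}$, contradicting the inequality just obtained; hence $k \le w$, and combined with $n \ge 2m-k$ this yields $n \ge 2m - w$. The argument rests on essentially one idea — shorten away \emph{all} left vertices of degree at least two at once — which collapses the problem onto the $(k,k,w)$ case, where the trivial cardinality bound of \Lref{lem:sum_cond} is by itself strong enough; I do not anticipate any technical obstacle beyond checking this reduction, which is immediate from \Lref{lemma6} (equivalently, from \Lref{derived-maps}).
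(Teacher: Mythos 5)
Your proposal is correct and follows essentially the same route as the paper: it sets $t_1=0$ and $t_i=d_i$ for $i\ge 2$ in \Lref{lemma6} to obtain $2^{d_1}\le\sum_{j=0}^{w}\binom{d_1}{j}$, hence $d_1\le w$, and then uses \eq{degree-distribution} to conclude $n\ge 2m-d_1\ge 2m-w$. The only difference is cosmetic — your explicit interpretation of this parameter choice as shortening away all left vertices of degree at least two — which matches the paper's argument in substance.
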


\begin{proof}
Set $t_1 = 0$ and $t_i = d_i$ for $i = 2,3,\ldots,\Delta$ in \Lref{lemma6}.
Then 
$n - t_1-2t_2-\cdots-\Delta t_\Delta = d_1$ 
while
$t_1\kern-1pt + t_2\kern-1pt + \cdots + t_\Delta = m - d_1$
in view of~\eq{degree-distribution}, and 
the bound in \eq{bound-general} reduces to
$$
d_1 
 \le\, 
\log_2 \sum_{j=0}^w\! \binom{d_1}{j}
$$
This is only possible if $w \ge d_1$. Now, again 
in view of~\eq{degree-distribution},
we have
$
n \ge d_1 + 2(d_2 + \cdots + d_\Delta) = 2m - d_1
$,
and the lemma follows.
\end{proof}

\looseness=-1
The bounds on $\nu(m,w)$
resulting from \Lref{lem:sum_cond} and \Lref{lem:tight} 
are 
illustrated 
in Figure\,1 for 
\mbox{$w = 3$}, 
and we can see that neither of them implies the other.
In fact, for all $w$, it 
can be readily shown 
that \Lref{lem:tight} is\linebreak
tighter than \Lref{lem:sum_cond} whenever $w < m \le 3w$,
whereas \Lref{lem:sum_cond} is tighter when $m$ is large.

\looseness=-1
We next~investigate the conditions under which equality can
hold in the bounds of \Lref{lem:sum_cond} and \Lref{lem:tight}.\linebreak
A domination mapping 
that establishes a \emph{bijection} between
$\bits^m$ and $\cB(n,\kern-1ptw)$ will be 
called~\emph{\bfit perfect};~clear\-ly
an $(m,n,w)$-domination mapping is perfect if and
only if its parameters satisfy
$2^m = \sum_{j=0}^w \binom{n}{j}$.
There~are\linebreak
only four known cases where (an incomplete) sum of binomial coefficients
gives a power of $2$. The $(m,n,w)$ triples for these four cases
are given by
\be{perfect-cases}
(m,m+1,m/2)~\text{for $m$ even},
\hspace{1.80ex}
(m,2^m-1,1),
\hspace{1.80ex}
(12,90,2),
\hspace{1.80ex}
(11,23,3)
\ee
All these cases were discovered in the context of perfect 
error-correcting codes~\cite{Heden,Solovieva} in the 1960s, 
and~none were found since. It is well known that perfect binary
codes --- that is, partitions of $\bits^n$ into translates of 
the Hamming ball
$\cB(n,w)$ --- exist in only three of the four cases (it is
not possible to partition $\bits^{90}$ into Hamming balls).
Remarkably, we have found perfect domination mappings for
each of the four triples~listed in~\eq{perfect-cases}.
The first set of parameters is attained for $m = 2$
by the $(2,3,1)$-domination mapping with degree~se\-quence $(1,2)$,
given by
$$
\varphi(00) \,=\, 000,~~~~~
\varphi(01) \,=\, 010,~~~~~
\varphi(10) \,=\, 100,~~~~~
\varphi(11) \,=\, 001
$$
By \Lref{lem:tight}, 
there are no $(m,m+1,m/2)$-domination mappings for $m \ge 4$.
In Section\,\ref{sec:w=1}, we 
explicitly construct
perfect $(m,2^m\kern-1pt-1,1)$-domination mappings for all $m \ge 1$.
Finally, perfect domination mappings with parameters $(12,90,2)$
and $(11,23,3)$ were found by computer search using the methods
of Section\,\ref{sec:algorithm}.

\newpage
$\,$\\[-1.62cm]

\hspace*{0.45cm}%
\begin{minipage}{9.00cm}
\hspace*{0.81cm}{\large $n$}\\[0.09cm]
%
\mbox{\includegraphics[width=7.95cm]{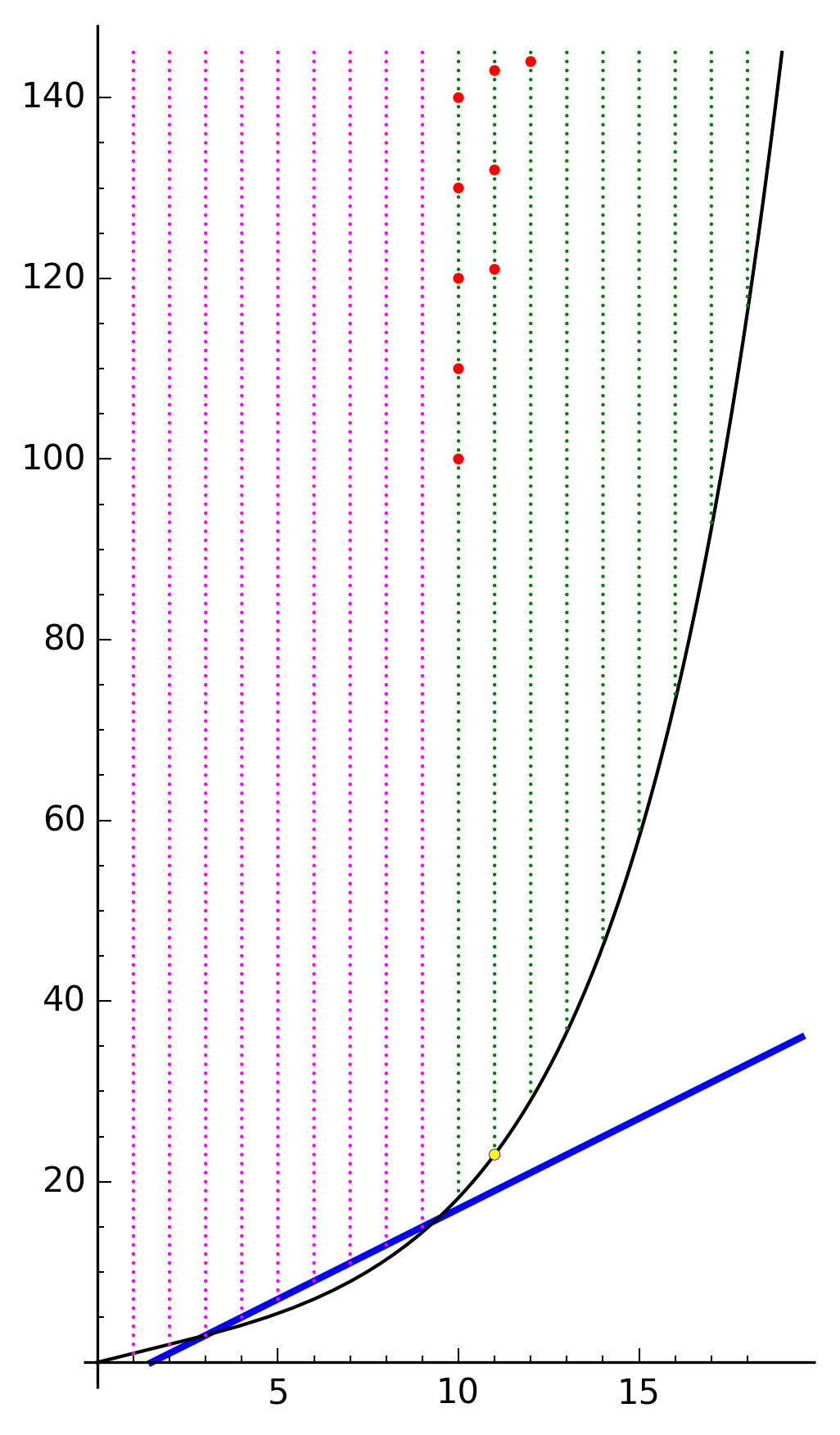}}%
%
%
\end{minipage}
\hspace*{-1.80cm}\raisebox{-1.00cm}{
\small
\begin{tabular}{c@{~~~}l}
\raisebox{-.06cm}%
{\includegraphics[width=0.54cm,height=0.27cm]{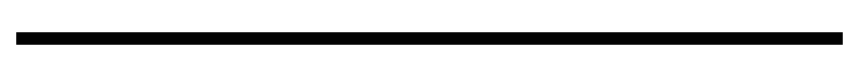}} &
necessary condition of \Lref{lem:sum_cond}
\\
\raisebox{-.06cm}%
{\includegraphics[width=0.54cm,height=0.27cm]{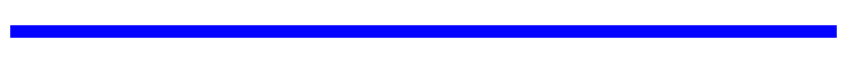}} &
necessary condition of \Lref{lem:tight}
\\[0.18cm]
\includegraphics[height=0.18cm]{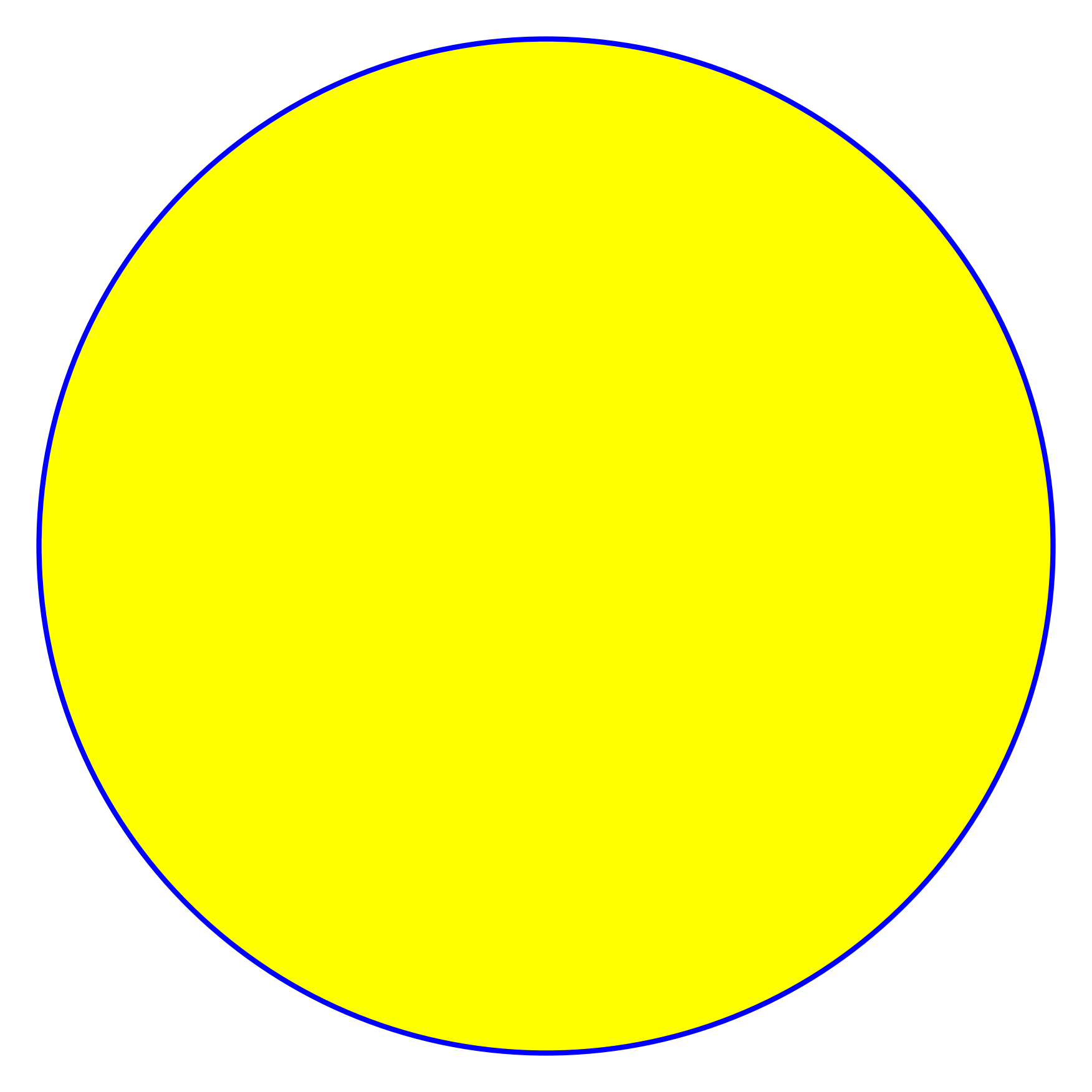} &
perfect domination mapping
\\
\includegraphics[height=0.09cm]{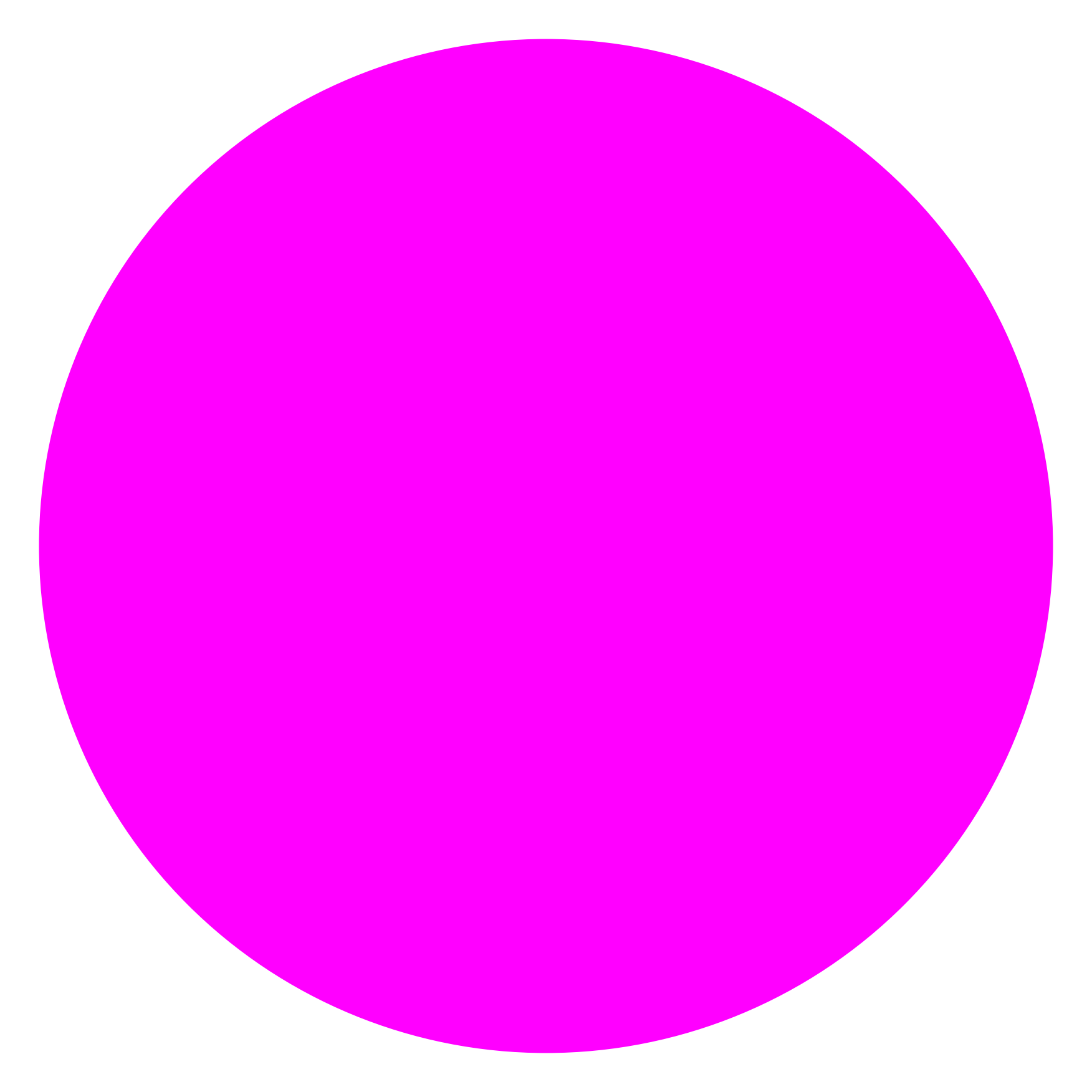} &
mapping constructed in Sections \ref{sec:properties} and \ref{sec:constructions}
\\
\includegraphics[height=0.09cm]{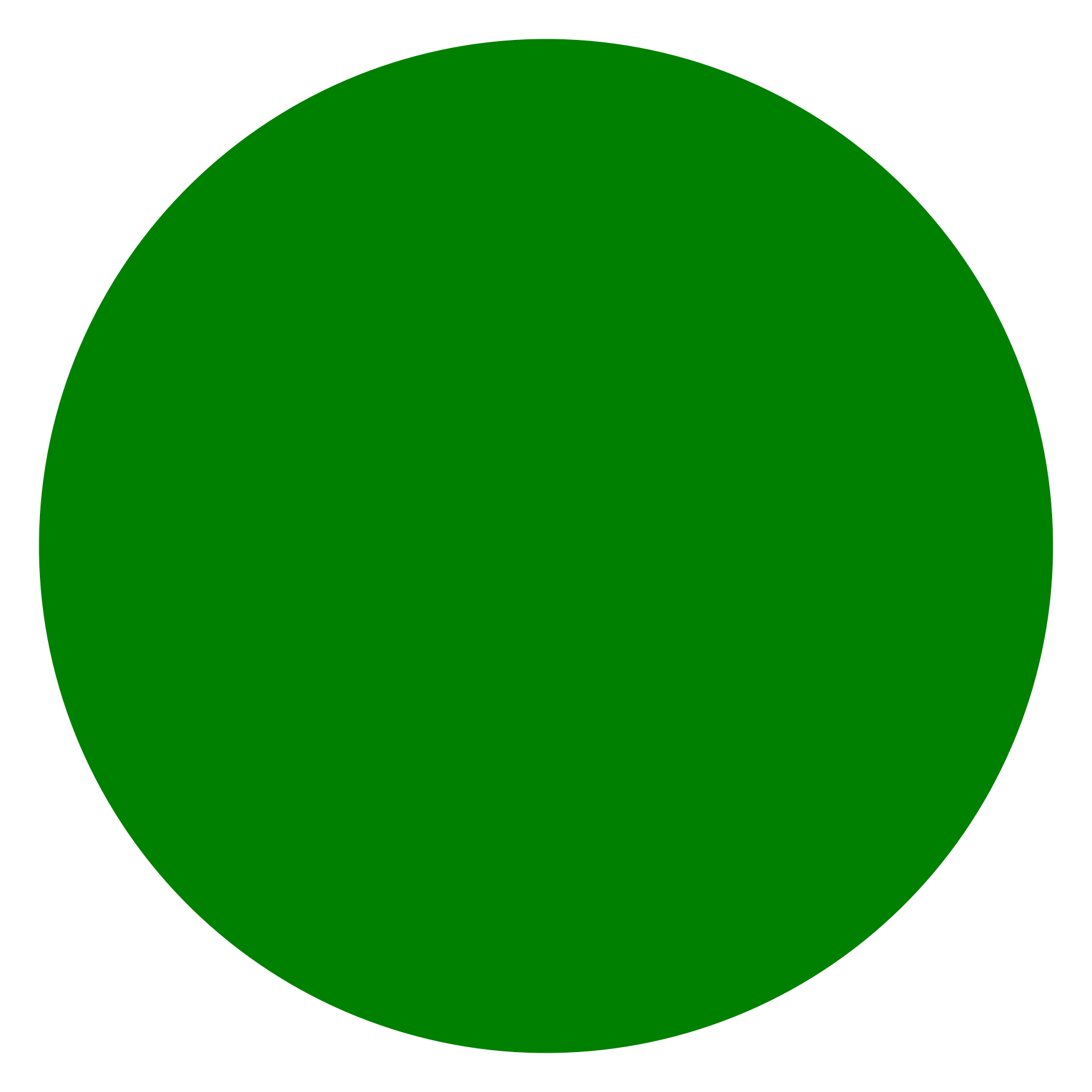} &
mapping exists by the results of Section\,\ref{sec:properties} 
\\
\includegraphics[height=0.18cm]{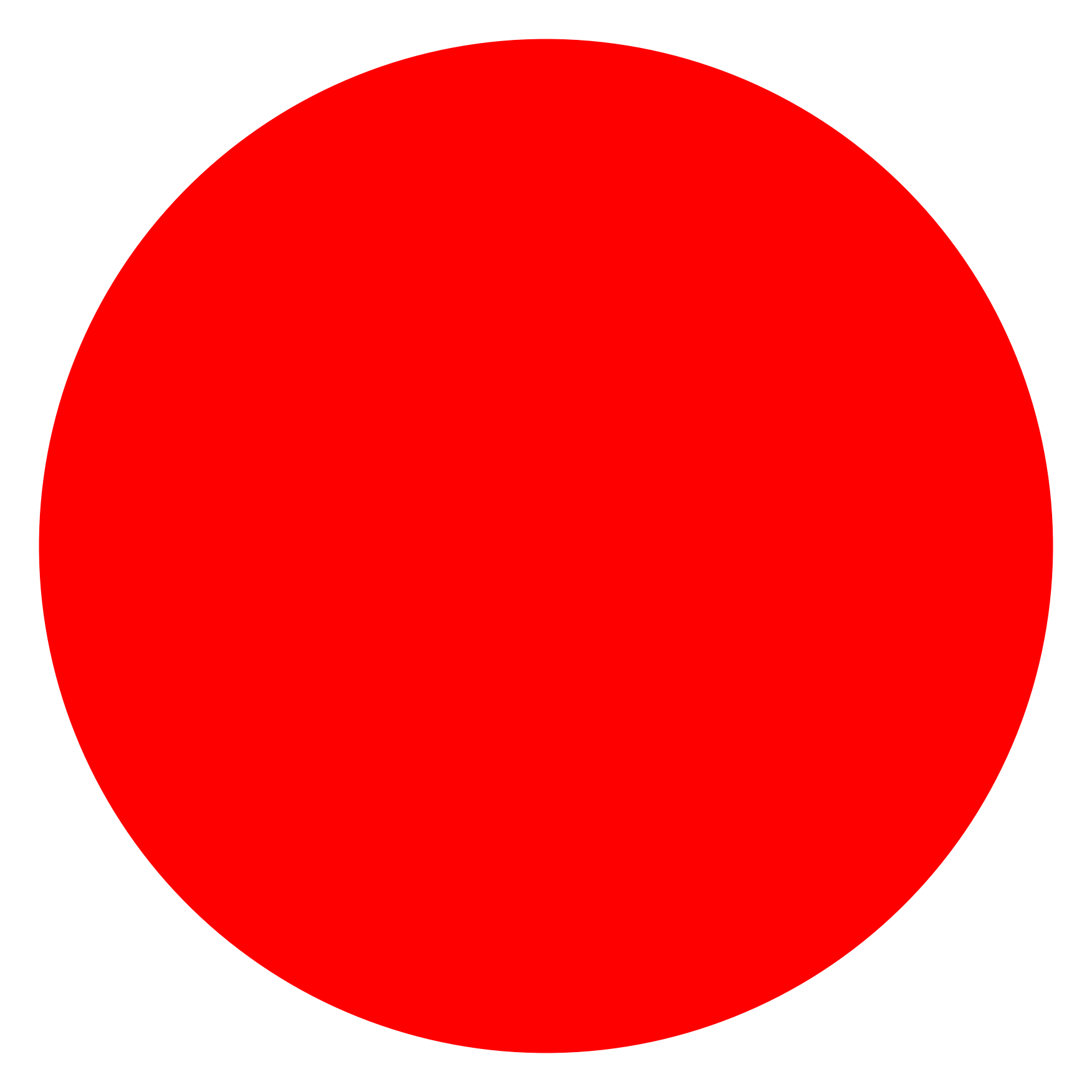} &
mapping exists by the results of Section\,\ref{sec:proof}
\end{tabular}}

\vspace{-0.90cm}\hspace*{8.55cm}{\large $m$}

\vspace{0.72cm}
\centerline{%
\textbf{Figure\,1.}
\textsl{Bounds, constructions, and existence 
of $(m,n,w)$-domination mappings for $w=3$.}}

\vspace{0.63cm}
In contrast to equality in \eq{cardinality-condition}, which is 
quite rare, equality in \Lref{lem:tight} can be achieved much more
easily. The next lemma gives necessary and sufficient conditions 
for this.

\begin{lemma}
\label{lem:optimal_domination}
Let $\varphi$ be an $(m,n,w)$-domination mapping. Then $n = 2m - w$
if and only if the degree distribution of 
the underlying domination graph is 
$d_1 = w$,\, $d_2 = m-w$,\, and\, $d_i = 0$\kern2pt\ for $i \ge 2$.
\vspace{-0.54ex}
\end{lemma}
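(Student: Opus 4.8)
The plan is to prove the two implications separately. The reverse implication is immediate from the second identity in \eq{degree-distribution}, and the forward implication reduces to the argument already carried out in the proof of \Lref{lem:tight}, followed by a short counting computation on the degree distribution; so almost all of the work has in fact been done already.

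For the ``if'' direction, I would simply observe that if $d_1 = w$, $d_2 = m-w$, and $d_i = 0$ for $i \ge 3$, then $n = d_1 + 2d_2 = w + 2(m-w) = 2m - w$ by \eq{degree-distribution}, and nothing more is needed. For the ``only if'' direction, suppose $n = 2m - w$. First I would recall from the proof of \Lref{lem:tight} that every $G$-domination mapping forces $d_1 \le w$: choosing $t_1 = 0$ and $t_i = d_i$ for $i \ge 2$ in \Lref{lemma6} collapses the bound to $d_1 \le \log_2 \sum_{j=0}^{w}\binom{d_1}{j}$, which is impossible once $d_1 > w$ because then $\sum_{j=0}^{w}\binom{d_1}{j} < 2^{d_1}$. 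Next, combining the two identities in \eq{degree-distribution}, I would write
$$
n \;=\; \sum_{i=1}^{\Delta} i\, d_i \;=\; \Bigl(d_1 + 2\sum_{i=2}^{\Delta} d_i\Bigr) + \sum_{i=3}^{\Delta} (i-2)\, d_i \;=\; (2m - d_1) + \sum_{i=3}^{\Delta} (i-2)\, d_i .
$$
Since the last sum has only nonnegative terms, this gives $n \ge 2m - d_1$, and together with $n = 2m - w$ we get $d_1 \ge w$; hence $d_1 = w$. Substituting $d_1 = w$ and $n = 2m - w$ back into the displayed identity leaves $\sum_{i=3}^{\Delta}(i-2)d_i = 0$, so $d_i = 0$ for all $i \ge 3$, and then the first identity in \eq{degree-distribution} yields $d_2 = m - d_1 = m - w$.

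I do not expect a genuine obstacle here. The only step demanding a moment's care is the strict inequality $\sum_{j=0}^{w}\binom{d_1}{j} < 2^{d_1}$ for $d_1 > w$, which holds because the complete binomial sum equals $2^{d_1}$ while the omitted terms include $\binom{d_1}{d_1} = 1 > 0$; everything else is bookkeeping with the identities in \eq{degree-distribution}. It is worth emphasizing in the write-up that this lemma is essentially a sharpening of \Lref{lem:tight}: that lemma extracts $d_1 \le w$ and hence $n \ge 2m - d_1 \ge 2m - w$, and the present lemma just records exactly which degree distributions make both of these inequalities tight.
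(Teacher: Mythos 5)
Your proposal is correct and follows essentially the same route as the paper: sufficiency read off from \eq{degree-distribution}, and necessity obtained by combining the bound $d_1 \le w$ extracted from the proof of \Lref{lem:tight} with the counting identity $n \ge 2m - d_1$ (with slack $\sum_{i\ge 3}(i-2)d_i$), forcing $d_1 = w$, $d_2 = m-w$, and $d_i = 0$ for $i \ge 3$. The only difference is cosmetic: the paper first derives $2d_1 + d_2 \ge m + w$ and then kills the higher degrees before pinning down $d_1 = w$, whereas you pin down $d_1 = w$ first and let the vanishing slack handle the rest.
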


\begin{proof}
It suffices to show that the conditions on the degree distribution
are necessary, since the fact that they are sufficient is trivial
from~\eq{degree-distribution}. Suppose $n = 2m-w$. Then 
\eq{degree-distribution} implies that
$$
2m - w
\,\ge\,
d_1 + 2d_2 + 3(d_3 + d_4 + \cdots + d_\Delta)
\,=\,
d_1 + 2d_2 + 3(m - d_1 - d_2)
\,=\,
3m - 2d_1 - d_2
$$
which simplifies to $2d_1 + d_2 \ge m + w$. Now, it follows from
(the proof of) \Lref{lem:tight} that $d_1 \le w$ in \emph{any}\linebreak
$(m,n,w)$-domination mapping. Hence $2d_1 + d_2 \ge m + w$
further implies 
$
d_1 + d_2 
\ge
m
$.
But 
this~is~only~pos\-sible 
if $d_1 + d_2 = m$ and $d_3 + d_4 + \cdots + d_\Delta = 0$.
Finally, if $d_1\! < w$, then $n 
\ge 2m - d_1 > 2m-w$.
\end{proof}

In the next section, for all $m$ in the range $w \le m \le 3w$,
we present an explicit construction of domination mappings whose
degree distribution satisfies the conditions of \Lref{lem:optimal_domination}.
By \Lref{lem:tight}, such mappings are optimal, and
therefore $\nu(m,w) = 2m - w$ when $w \le m \le 3w$ and $w \ge 3$.

Finally, 
we note that the case $m \le w$ is trivial. In this case,
\Lref{lem:sum_cond} reduces to $n \kern-1pt\ge m$. Thus~\mbox{$\nu(m,w) = m$}
and the optimal $(m,m,w)$-domination mapping is the identity
map from $\bits^m$ to itself.\vspace*{-1.50ex}

\newpage

\vspace{3ex}
\section{Constructions}
\label{sec:constructions}
\vspace{-0.56ex}

\looseness=-1
In this section, we present several explicit \emph{constructions} of
(optimal) domination mappings. These results are distinct from our
results in Sections \ref{sec:algorithm} and \ref{sec:proof}, which
are concerned solely with the \emph{existence} of such mappings.

\vspace{.90ex}
\subsection{Product construction}\vspace{-.72ex}
\label{sec:product}

We begin with a simple and effective recursive construction that
combines any two domination mappings~$\varphi_1$ and $\varphi_2$ 
to produce another domination mapping $\varphi$.
Notably, if the mappings $\varphi_1$ and~$\varphi_2$ 
attain the bound of \Lref{lem:tight}, then so does
the mapping $\varphi$ obtained from this construction.

Let 
$\varphi_1\!: \bits^{m_1} \to \cB(n_1,w_1)$ 
and
$\varphi_2\!: \bits^{m_2} \to \cB(n_2,w_2)$ 
be arbitrary domination mappings. Then their\kern1pt\
\emph{\bfit product}\kern1pt\
$\varphi = \varphi_1 \times \varphi_2$\, is a mapping from 
$\bits^{m_1+m_2}$ into $\cB(n_1\kern-1pt+n_2,w_1\kern-1pt+w_2)$ defined as 
follows:
\be{product-def}
\varphi(\xxx_1,\xxx_2)
\ = \
\bigl(\varphi_1(\xxx_1),\varphi_2(\xxx_2)\bigr)
\ee
where $\xxx_1\,{\in}\,\bits^{m_1}$, $\xxx_2\,{\in}\,\bits^{m_2}$,
and $(\cdot,\cdot)$ stands for string concatenation. 
That is, in order to find the~image of a word $\xxx \in \bits^{m_1+m_2}$
under $\varphi$, we first parse $\xxx$ as $(\xxx_1,\xxx_2)$, then
apply $\varphi_1$ and $\varphi_2$ to the two parts. 
\begin{theorem}
\label{thm:product}
If $\varphi_1$ is an $(m_1,n_1,w_1)$-domination mapping 
and $\varphi_2$ is an $(m_2,n_2,w_2)$-domination mapping,
then their product $\varphi = \varphi_1 \times \varphi_2$\kern1pt\ is 
an $(m_1\kern-1pt+ m_2, n_1\kern-1pt+n_2,w_1\kern-1pt+w_2)$-domination mapping.
\vspace{-0.54ex}
\end{theorem}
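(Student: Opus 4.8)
The plan is to verify the three defining requirements of a domination mapping for $\varphi = \varphi_1 \times \varphi_2$ directly from the definition: that $\varphi$ maps into $\cB(n_1+n_2, w_1+w_2)$, that $\varphi$ is injective, and that $\varphi$ is $G$-dominating for some domination graph $G$ on $[m_1+m_2] \cup [n_1+n_2]$ with no isolated right vertices. The construction of $G$ should be the ``disjoint union'' of the two underlying domination graphs $G_1$ and $G_2$: by \Lref{no-isolated} we may assume $\varphi_i$ is $G_i$-dominating with $G_i$ having no isolated vertices on either side, and we set $G$ to have left vertices $[m_1] \cup [m_2]$ (with $[m_2]$ relabeled as $\{m_1+1,\dots,m_1+m_2\}$), right vertices $[n_1] \cup [n_2]$ (similarly relabeled), and edge set the union of (relabeled) $E_1$ and $E_2$. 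Since neither $G_1$ nor $G_2$ has isolated right vertices, neither does $G$.

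First I would check the range condition: for $\xxx = (\xxx_1,\xxx_2)$, we have $\wt(\varphi(\xxx)) = \wt(\varphi_1(\xxx_1)) + \wt(\varphi_2(\xxx_2)) \le w_1 + w_2$, so $\varphi(\xxx) \in \cB(n_1+n_2, w_1+w_2)$, as the Hamming weight of a concatenation is the sum of the weights. Next, injectivity: if $\varphi(\xxx_1,\xxx_2) = \varphi(\xxx_1',\xxx_2')$, then comparing the first $n_1$ coordinates gives $\varphi_1(\xxx_1) = \varphi_1(\xxx_1')$, hence $\xxx_1 = \xxx_1'$ by injectivity of $\varphi_1$, and likewise $\xxx_2 = \xxx_2'$ from the last $n_2$ coordinates. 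Then I would verify the domination property: take any $(i,j) \in E$ and any $\xxx = (\xxx_1,\xxx_2) \in \bits^{m_1+m_2}$ with $x_i = 0$, and write $\yyy = \varphi(\xxx) = (\varphi_1(\xxx_1), \varphi_2(\xxx_2))$. By construction of $E$, either $(i,j)$ comes from $E_1$ (so $i \le m_1$ and $j \le n_1$) or from $E_2$ (so $i > m_1$ and $j > n_1$); in the first case $x_i$ is a coordinate of $\xxx_1$ and $y_j$ a coordinate of $\varphi_1(\xxx_1)$, and the $G_1$-domination of $\varphi_1$ gives $y_j = 0$; the second case is symmetric using $\varphi_2$ and $G_2$.

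There is no real obstacle here — the argument is entirely routine bookkeeping — but the one point that needs a sentence of care is the relabeling of indices so that the edges of $E_1$ and $E_2$ live on disjoint vertex sets inside $[m_1+m_2] \cup [n_1+n_2]$, and the observation that under this relabeling a coordinate index $i$ in $[m_1+m_2]$ belongs to exactly one of the two blocks and correspondingly selects a coordinate of exactly one of $\xxx_1,\xxx_2$. Once that is fixed, the verification of each of the three properties is a one-line check. I would also remark, to justify the sentence preceding the theorem, that if $\varphi_1$ and $\varphi_2$ attain the bound of \Lref{lem:tight} — i.e.\ $n_1 = 2m_1 - w_1$ and $n_2 = 2m_2 - w_2$ — then $n_1 + n_2 = 2(m_1+m_2) - (w_1+w_2)$, so $\varphi$ attains it too; this is immediate and need not be part of the formal proof of \Tref{thm:product} itself.
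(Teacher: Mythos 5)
Your proposal is correct and follows essentially the same route as the paper: the paper also notes that injectivity and the weight bound are immediate from the definition of the product, and establishes the domination property by taking the underlying graph to be the (index-shifted) disjoint union of $G_1$ and $G_2$, exactly as you do. The only difference is that the paper compresses the routine relabeling bookkeeping into a single sentence.
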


\begin{proof}
\looseness=-1
The parameters of $\varphi$ and the fact that $\varphi$ is injective
are obvious from \eq{product-def}. Thus it remains to show~that 
every position $j \in [n_1\kern-1pt+n_2]$ is dominated by some
position $i \in [m_1\kern-1pt+m_2]$. But this is easy. Let $G_1$
and $G_2$ be the domination graphs for $\varphi_1$ and $\varphi_2$,
respectively. If $j \le n_1$, we follow the corresponding edge of $G_1$
to find $i$. If $j > n_1$, we follow the edge of $G_2$ that corresponds
to $j - n_1$ to find $i^*$, then set $i = i^* + m_1$.
\end{proof}

Observe that the proof of \Tref{thm:product} relates the degree 
distribution $(d_1,d_2,\ldots,d_\Delta)$ of (the domination graph
for) the product 
$\varphi = \varphi_1 \times \varphi_2$
to the degree distributions $(d'_1,d'_2,\ldots,d'_{\Delta'})$
and $(d^*_1,d^*_2,\ldots,d^*_{\Delta^*})$ of the 
constituent mappings $\varphi_1$ and $\varphi_2$. 
Clearly $\Delta = \max\{\Delta',\Delta^*\}$ and $d_i = d'_i + d^*_i$
for all $i$. 

%

\begin{corollary}
\label{cor:product_optimal1}
For all $w\ge 3$, the product construction 
yields a $(3w,5w,w)$-domination mapping.
\vspace{-0.72ex}
\end{corollary}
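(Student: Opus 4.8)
The plan is to build the $(3w,5w,w)$-domination mapping by applying the product construction $w$ times to a single building block. First I would note that Corollary~\ref{cor:product_optimal1} is a claim about $m=3w$, which sits exactly at the boundary $m=3w$ where Lemma~\ref{lem:tight} gives $n\ge 2m-w=5w$; so what we really want is an \emph{optimal} mapping realizing $\nu(3w,w)=5w$, and the cleanest route is to exhibit an optimal $(3,5,1)$-domination mapping and take its $w$-fold product. Concretely, if $\varphi_0$ is a $(3,5,1)$-domination mapping, then by Theorem~\ref{thm:product}, applied $w-1$ times, $\varphi_0^{\times w} \deff \varphi_0 \times \varphi_0 \times \cdots \times \varphi_0$ ($w$ factors) is a $(3w,5w,w)$-domination mapping, which is exactly the statement.

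So the main task is to produce a $(3,5,1)$-domination mapping. By Lemma~\ref{lem:tight}, $n\ge 2\cdot 3 - 1 = 5$, and since $w=1$ here, the relevant construction should follow the template announced just after Lemma~\ref{lem:optimal_domination}: for $w\le m\le 3w$ one wants a mapping whose domination graph has degree distribution $d_1=w=1$, $d_2=m-w=2$, i.e. one left vertex of degree $1$ and two left vertices of degree $2$, matching Lemma~\ref{lem:optimal_domination} with $n=5$. I would therefore set up the domination graph $G$ on $[3]\cup[5]$ with left vertex $1$ dominating right position $1$, left vertex $2$ dominating right positions $2,3$, and left vertex $3$ dominating right positions $4,5$, and then write down an explicit injection $\varphi_0\colon\bits^3\to\cB(5,1)$ that is $G$-dominating. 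Since $|\cB(5,1)| = 1 + 5 = 6 < 8 = 2^3$, no such mapping into $\cB(5,1)$ can exist — so in fact the $w=1$ base case fails, and the building block must instead be taken at a higher weight.

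The fix, and the point I expect to be the real content, is to choose the building block to be an optimal $(3w_0, 5w_0, w_0)$-domination mapping for the \emph{smallest} $w_0$ for which the cardinality bound $2^{3w_0}\le |\cB(5w_0,w_0)|$ is satisfied — but that again forces $w_0$ to be fairly large, so this is not the intended route either. The intended route, I believe, is simply to invoke a construction given elsewhere in Section~\ref{sec:constructions} (the section containing this corollary): the text preceding Lemma~\ref{lem:optimal_domination} promises that ``for all $m$ in the range $w\le m\le 3w$, we present an explicit construction of domination mappings whose degree distribution satisfies the conditions of Lemma~\ref{lem:optimal_domination}.'' Taking $m=3w$ in that construction yields a $(3w,5w,w)$-domination mapping directly, and the role of the product construction is that this family is itself built by a product: an optimal $(w,w+?,?)$ piece times... — more precisely, one writes $3w = w + 2w$ and combines an optimal mapping on the first $w$ coordinates (degree-$1$ vertices, contributing $w$ to both $n$ and the weight budget is wrong) with a product of $w$ copies of an optimal $(2,3,1)$ building block. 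Indeed a $(2,3,1)$-domination mapping is exhibited explicitly right after \eqref{perfect-cases} (degree sequence $(1,2)$, which matches Lemma~\ref{lem:optimal_domination} with $m=2$, $w=1$, $n=3$), and the identity map is a trivial $(w,w,w)$-domination mapping; by Theorem~\ref{thm:product} their product — identity on $w$ coordinates times $w$ copies of the $(2,3,1)$ map — is a $(w+2w,\,w+3w,\,w+w\cdot 1)=(3w,5w,w)$-domination mapping, which is the corollary.

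Thus the proof I would write is just: take the $(2,3,1)$-domination mapping $\varphi_0$ from Section~\ref{sec:bounds} and the identity map $\mathrm{id}\colon\bits^w\to\bits^w$ (a $(w,w,w)$-domination mapping, valid since $m\le w$ makes the identity optimal), form $\mathrm{id}\times\varphi_0^{\times w}$, and apply Theorem~\ref{thm:product} repeatedly; the resulting parameters are $(w + 2w,\ w + 3w,\ w + w) = (3w,5w,w)$. The only step requiring care is the bookkeeping of parameters under iterated products — verifying that the weights and lengths add as claimed across $w+1$ factors — and checking that the hypothesis $w\ge 3$ is not actually needed for \emph{existence} (it is only needed, via Lemma~\ref{lem:tight} with $w\ge 3$, to conclude that this construction is \emph{optimal}, i.e. $\nu(3w,w)=5w$), so the bare existence statement in the corollary goes through for all $w\ge 1$ with the stated construction.
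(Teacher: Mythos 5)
There is a genuine gap: the parameter bookkeeping in your final construction is wrong, and the construction itself cannot work. The identity on $\bits^w$ is a $(w,w,w)$-domination mapping and the $w$-fold product of the $(2,3,1)$ mapping is a $(2w,3w,w)$-domination mapping; by \Tref{thm:product} the parameters \emph{add}, so their product is a $(3w,\,w+3w,\,w+w)=(3w,4w,2w)$-domination mapping, not a $(3w,5w,w)$ one (note $w+3w=4w$, not $5w$, and the radius becomes $2w$). This is not a repairable slip: the image genuinely contains words of weight $2w$ (the all-ones input maps to weight $w$ on the identity block plus weight $w$ from the $w$ copies of $\varphi_0$), and monotonicity (\Lref{monotonic}) only lets you increase $n$ or $w$, never decrease the radius. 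The structural obstruction is that to land at $m=3w$ with total radius $w$ via products, every factor must itself sit at its own $m_i=3w_i$ boundary --- which is exactly why your first attempt with a $(3,5,1)$ building block was the right shape; but, as you noted, $(3,5,1)$ does not exist, and neither does $(6,10,2)$ (since $2^6=64>56=|\cB(10,2)|$).

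The paper's proof takes a different and essentially unavoidable route: it uses three base mappings with parameters $(9,15,3)$, $(12,20,4)$, and $(15,25,5)$ found by computer search (these are the smallest admissible points on the $m=3w$ line), together with the numerical fact that every integer $w\ge 3$ is a nonnegative integer combination of $3$, $4$, and $5$; the product construction then assembles the $(3w,5w,w)$ mapping from these blocks. This also shows that your closing claim --- that $w\ge 3$ is needed only for optimality and that existence holds for all $w\ge 1$ --- is false: for $w=1$ and $w=2$ no $(3w,5w,w)$-domination mapping exists at all, by \Lref{lem:sum_cond}, which is precisely why the base cases start at $w=3$.
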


\begin{proof}
Using computer search, we found domination mappings 
$\varphi_3$, $\varphi_4$, and $\varphi_5$ with parameters
$(9,15,3)$, $(12,20,4)$, and $(15,25,5)$, respectively.
Since any $w \ge 3$ is an integer linear combination of
$3,4$, and $5$, a~$(3w,5w,w)$-domination mapping
can be constructed as the appropriate product of 
$\varphi_3$, $\varphi_4$, and $\varphi_5$.
\end{proof}

\begin{theorem}
$$
\nu(m,w) 
\, = \,
2m - w
\hspace{6.0ex}\text{for all~ $w \ge 3$~ and~ $w \le m \le 3w$}
$$
\end{theorem}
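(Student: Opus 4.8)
The plan is to prove the equality $\nu(m,w)=2m-w$ for $w\ge 3$ and $w\le m\le 3w$ by establishing the two inequalities separately. The lower bound $\nu(m,w)\ge 2m-w$ is immediate from \Lref{lem:tight}, which asserts that every $(m,n,w)$-domination mapping satisfies $n\ge 2m-w$. So the entire content of the theorem is the upper bound: for every $w\ge 3$ and every $m$ with $w\le m\le 3w$, there exists an $(m,2m-w,w)$-domination mapping.

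For the upper bound I would first dispose of the two endpoints and then interpolate. At $m=w$ the claim is $\nu(w,w)=w$, realized by the identity map $\bits^w\to\bits^w$ (as already observed in Section\,\ref{sec:bounds}); note $2m-w=w$ here, and the trivial degree distribution $d_1=w$ fits \Lref{lem:optimal_domination}. At $m=3w$ the claim is $\nu(3w,3w)=5w$, which is exactly \Cref{cor:product_optimal1}: the product of the computer-found mappings $\varphi_3,\varphi_4,\varphi_5$ with parameters $(9,15,3)$, $(12,20,4)$, $(15,25,5)$ gives a $(3w,5w,w)$-domination mapping. For intermediate $m$, the key tool is the product construction of \Tref{thm:product}: if I can build an $(a,2a-w_1,w_1)$-domination mapping and independently a $(b,2b-w_2,w_2)$-domination mapping, their product is an $(a+b,2(a+b)-(w_1+w_2),w_1+w_2)$-domination mapping, so the property ``attains the bound of \Lref{lem:tight}'' is preserved under taking products. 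Thus it suffices to exhibit a small stock of ``atom'' mappings attaining $n=2m-w$ from which every admissible pair $(m,w)$ with $w\le m\le 3w$ can be assembled as a sum.

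Concretely I would reduce to showing: there exist base $(m,2m-w,w)$-domination mappings for $w\in\{1,2,3,4,5\}$ and a handful of values of $m$ in each case (e.g. the perfect $(m,2^m-1,1)$-mappings specialize poorly, so instead use the small optimal mappings with degree distribution $d_1=w$, $d_2=m-w$ guaranteed by the construction promised in the sentence preceding this theorem, ``for all $m$ in the range $w\le m\le 3w$, we present an explicit construction of domination mappings whose degree distribution satisfies the conditions of \Lref{lem:optimal_domination}''). Indeed, once such an explicit family exists, the theorem is literally just the combination of that construction with \Lref{lem:tight}. If one instead wants to build everything from products of a finite seed set, one checks that the pairs $(m,w)$ with $w\le m\le 3w$ form a semigroup-like region: writing $k=m-w\ge 0$ and noting $k\le 2w$, any $(w,k)$ decomposes as a sum of pairs $(w_i,k_i)$ with $k_i\le 2w_i$ and $w_i\in\{1,2,3\}$ (since $\{1,2,3\}$ generates all integers $\ge 1$ additively and the constraint $k\le 2w$ is additive), and for each such small pair one supplies a base mapping by the explicit Section\,\ref{sec:constructions} construction or by the computer searches of \Cref{cor:product_optimal1}.

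The main obstacle is the supply of base cases: the theorem genuinely rests on having, for small $w$ (namely $w=1,2,3,4,5$ or whatever generating set one picks) and small $m$ in the window $w\le m\le 3w$, an honest construction of a domination mapping with $n=2m-w$, equivalently with degree distribution $d_1=w,\ d_2=m-w$. For $w=1,2$ these are handled by the constructions announced for Sections\,\ref{sec:w=1} and the $w=2$ odd-$m$ case; for $w\ge 3$ the existence of the $(9,15,3)$, $(12,20,4)$, $(15,25,5)$ mappings (and the intermediate $m$ between $w$ and $3w$) is asserted via computer search. So the ``proof'' is really: quote \Lref{lem:tight} for the lower bound; quote the explicit construction of Section\,\ref{sec:constructions} (degree distribution $d_1=w$, $d_2=m-w$) together with \Lref{lem:optimal_domination} and \Lref{lem:tight} for the upper bound; and close the argument. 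I would write it in exactly that compressed form, since all the real work has been outsourced to the named construction and to \Cref{cor:product_optimal1}.
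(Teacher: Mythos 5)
Your lower bound is exactly the paper's (\Lref{lem:tight}), and you correctly identify \Cref{cor:product_optimal1} as the source of the $(3w,5w,w)$ endpoint. The gap is in how you fill in the intermediate values $w\le m<3w$. Your primary route --- quoting ``the construction promised in the sentence preceding this theorem'' --- is circular: that sentence announces precisely the construction carried out in this theorem's proof, so it cannot be used as an input to it. Your fallback route, assembling each $(m,w)$ as a product of ``atom'' mappings attaining $n=2m-w$ with $w_i\in\{1,2,3\}$, fails because the required atoms do not all exist: an optimal mapping with $w_i=1$, $m_i=3$ would need $n=5$, but $\nu(3,1)=2^3-1=7$; and with $w_i=2$, $m_i=6$ it would need $n=10$, but $\nu(6,2)=11$ (Appendix A). Indeed, for $w=1,2$ the bound $2m-w$ is not attained throughout $w\le m\le 3w$ --- this is exactly why the theorem assumes $w\ge 3$ --- so the claim that the constraint $k\le 2w$ ``is additive'' does not translate into an admissible decomposition with existing base mappings. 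Even restricting to atoms with $w_i\in\{3,4,5\}$, the only mappings the paper supplies directly are the computer-found ones at $m_i=3w_i$, so products alone cannot reach, e.g., $(m,w)=(8,3)$ with $n=11$.

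The missing idea is shortening, i.e.\ \Lref{derived-maps}. The paper's proof takes the single $(3w,5w,w)$-domination mapping of \Cref{cor:product_optimal1}, notes via \Lref{lem:optimal_domination} that its degree distribution must be $d_1=w$, $d_2=2w$, and then, for each $i=1,\ldots,2w$, deletes $i$ left vertices of degree $2$ (together with their dominated right positions). Each deletion reduces $m$ by $1$ and $n$ by $2$ while keeping $w$, so one obtains a $(3w-i,\,5w-2i,\,w)$-domination mapping, i.e.\ $n=2m-w$ for every $m$ with $w\le m\le 3w$; combined with \Lref{lem:tight} this gives the stated equality. Your proposal never invokes \Lref{derived-maps} in the upper-bound direction, and without it (or some other genuine supply of intermediate base mappings) the interpolation step does not go through.
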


\begin{proof}
In view of \Lref{lem:tight}, it suffices to show that an 
$(m,2m-w,w)$-domination mapping exists for the specified ranges
of $m$ and $w$. We begin with the $(3w,5w,w)$ mapping constructed
in \Cref{cor:product_optimal1}. Note that the degree distribution
of this mapping is 
$(w,2w)$ by \Lref{lem:optimal_domination}.
Thus for all $i = 1,2,\ldots,2w$, we can invoke \Lref{derived-maps}
to remove $i$ vertices of degree $2$, thereby producing
a $(3w-i,5w-2i,w)$-domination mapping.\vspace*{-0.36ex}

\end{proof}

\vspace{-2.00ex}
\looseness=-1
We observe that optimal domination mappings that attain the
bound of \Lref{lem:tight} also exist beyond the $m = 3w$ threshold.
For example, there exists an optimal $(13,22,4)$-domination mapping with 
degree distribution $(4,9)$.
Taking this 
mapping as both
$\varphi_1$ and $\varphi_2$ in the product
construction, we obtain a $(26,44,8)$-domination mapping
with degree distribution $(8,18)$. This 
mapping is again optimal by \Lref{lem:optimal_domination}.

\vspace{1.00ex}
The product construction further implies the following immediate 
upper bound on $\nu(m,w)$.
\begin{lemma}
\label{lem:Prod_n_bound}
If $\nu(m_1,w_1) \leq n_1$ and $\nu(m_2,w_2) \leq n_2$, then $\nu(m_1+m_2,w_1+w_2) \leq n_1 + n_2$.
\end{lemma}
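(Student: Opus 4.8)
The statement $\nu(m_1+m_2, w_1+w_2) \le n_1 + n_2$ follows directly from the product construction (Theorem~\ref{thm:product}), so the plan is essentially to chain that theorem with the definition of $\nu$. First I would unwind the hypotheses: since $\nu(m_1,w_1) \le n_1$, the definition of $\nu(m_1,w_1)$ as the \emph{minimum} $n$ for which an $(m_1,n,w_1)$-domination mapping exists guarantees an $(m_1,\nu(m_1,w_1),w_1)$-domination mapping, and then \Lref{monotonic} (monotonicity in $n$) lifts this to an $(m_1,n_1,w_1)$-domination mapping $\varphi_1$. Symmetrically, obtain an $(m_2,n_2,w_2)$-domination mapping $\varphi_2$.

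Next I would apply \Tref{thm:product} to $\varphi_1$ and $\varphi_2$: their product $\varphi = \varphi_1 \times \varphi_2$ is an $(m_1+m_2,\, n_1+n_2,\, w_1+w_2)$-domination mapping. Finally, invoking the definition of $\nu$ once more, the existence of this mapping forces $\nu(m_1+m_2, w_1+w_2) \le n_1+n_2$, which is the claim.

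There is no real obstacle here; the only point requiring a word of care is the first step, where one must note that $\nu(m_i,w_i) \le n_i$ does not immediately hand over a mapping with second parameter exactly $n_i$ — it hands over one with second parameter $\nu(m_i,w_i)$, and one pads with zeros via \Lref{monotonic} to reach $n_i$. (Alternatively, one could observe that $\varphi = \varphi_1 \times \varphi_2$ built from the optimal constituents is an $(m_1+m_2, \nu(m_1,w_1)+\nu(m_2,w_2), w_1+w_2)$-domination mapping, so $\nu(m_1+m_2,w_1+w_2) \le \nu(m_1,w_1)+\nu(m_2,w_2) \le n_1+n_2$, folding both steps together.) Either way the argument is a two-line corollary of the product construction, so in the write-up I would present it as such rather than belaboring it.
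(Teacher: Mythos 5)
Your proof is correct and matches the paper's intent exactly: the paper states \Lref{lem:Prod_n_bound} as an immediate consequence of the product construction (\Tref{thm:product}), which is precisely your argument, and your remark about padding via \Lref{monotonic} (or folding both steps by bounding $\nu(m_1+m_2,w_1+w_2)$ by $\nu(m_1,w_1)+\nu(m_2,w_2)$) is the right level of care. Nothing further is needed.
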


\newpage
Using Lemma~\ref{lem:Prod_n_bound}, we can find an upper bound on $\nu(m,w)$ for all admissible parameters.
We can write $m=m_1+m_2$ and $w=w_1+w_2$ and use the
related upper bounds on $\nu(m_1,w_1)$ and $\nu(m_2,w_2)$ and apply Lemma~\ref{lem:Prod_n_bound}.
Hence, we can obtain recursively upper bounds on $\nu(m,w)$ 
for any triple $(m,n,w)$.

\subsection{Domination mappings into the Hamming ball of small radius}
\label{sec:w=1}

Perfect domination mappings for $w=1$ are easily constructed.
By \Lref{lem:sum_cond}, we have 
$\nu (m,1) \geq 2^m -1$ for $w=1$.
For a given $m \geq 1$, we will construct a
perfect $(m,2^m-1,1)$-domination mapping with degree sequence
$(\delta_1,\delta_2,\ldots,\delta_m)$, 
where $\delta_i=2^i$ for all $i$.
It should be noted that we can shorten this domination mapping to obtain
a perfect $(m',2^{m'}-1,1)$-domination mapping for any $m' < m$.
The $(m,2^m-1,1)$-domination mapping $\varphi$ is quite simple;
in fact, many such mappings exist.
Clearly, $\varphi(\zero) = \zero$.
Now, for an integer $j \in [2^m-1]$, let $\bbb(j)$ be the
binary word of length $m$ which forms the binary representation~of~$j$.
We then define $\varphi(\bbb(j)) = \yyy$, where $\yyy$ 
is the binary word of length $2^m-1$ and weight one, with
the single nonzero in its $j$-th position.
The proof of  correctness for
this construction can be given by induction on~$m$ 
or a related argument. This is summarized with the following theorem.

\begin{theorem}
\label{thm:w=1}
There exists a perfect $(m,2^m-1,1)$-domination mapping 
for all $m \geq 1$. Hence
$\nu(m,1)=2^m -1$ and $\mu(n,1) = \lceil \log_2 (n+1)\emph{} \rceil$.
\end{theorem}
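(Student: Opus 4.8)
The plan is to exhibit the claimed mapping explicitly and verify its three required features: injectivity, the weight bound $\wt(\varphi(\xxx)) \le 1$, and the existence of a domination graph $G$ with no isolated right vertices for which $\varphi$ is $G$-dominating. Since $|\cB(2^m-1,1)| = 1 + (2^m-1) = 2^m = |\bits^m|$, any injection $\varphi\!: \bits^m \to \cB(2^m-1,1)$ is automatically a bijection, so once we have a $G$-dominating injection the ``perfect'' claim is free, and the two numerical consequences $\nu(m,1) = 2^m-1$ and $\mu(n,1) = \ceil{\log_2(n+1)}$ follow immediately: the lower bound $\nu(m,1) \ge 2^m-1$ is \Lref{lem:sum_cond}, the upper bound is the construction, and $\mu(n,1) = \ceil{\log_2(n+1)}$ is obtained by inverting $\nu$ together with the monotonicity in $n$ (\Lref{monotonic}) and in $m$ (\Lref{derived-maps}).

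The map itself is the one described just before the theorem: $\varphi(\zero) = \zero$, and for $j \in [2^m-1]$ with binary representation $\bbb(j) \in \bits^m$, set $\varphi(\bbb(j)) = \eee_j$, the weight-one word of length $2^m-1$ supported on coordinate $j$. Injectivity is clear since $j \mapsto \bbb(j)$ and $j \mapsto \eee_j$ are both injective and $\zero$ is the unique preimage of $\zero$. The weight condition is immediate by construction. The only real content is specifying $G$ and checking the domination property. I would take the degree sequence $(\delta_1,\ldots,\delta_m)$ with $\delta_i = 2^{i-1}$ (so the total number of right vertices is $\sum_{i=1}^m 2^{i-1} = 2^m-1$, matching $n$), and partition $[2^m-1]$ into blocks $B_1, B_2, \ldots, B_m$ where $B_i$ has size $2^{i-1}$; concretely, take $B_i$ to be the set of integers $j \in [2^m-1]$ whose binary representation has its \emph{highest} set bit in position $i$, i.e. $2^{i-1} \le j \le 2^i - 1$. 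Declare left vertex $i$ adjacent to exactly the right vertices in $B_i$. No right vertex is isolated since the $B_i$ cover $[2^m-1]$.

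It remains to verify the domination condition: for every $\xxx \in \bits^m$ and every edge $(i,j)$ with $j \in B_i$, if $x_i = 0$ then $y_j = 0$ where $\yyy = \varphi(\xxx)$. If $\yyy = \zero$ there is nothing to check, so suppose $\yyy = \eee_k$ for some $k \in [2^m-1]$; then $\xxx = \bbb(k)$ and $y_j = 0$ unless $j = k$. So the only constraint to check is: if $j = k \in B_i$, then $x_i = 1$. But $k \in B_i$ means the highest set bit of $k$ is in position $i$, so indeed the $i$-th bit of $\bbb(k)$ is $1$, i.e. $x_i = 1$. Hence whenever $x_i = 0$ we cannot have $k \in B_i$, so $y_j = 0$ for all $j \in B_i$, as required. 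This shows $\varphi$ is $G$-dominating and completes the proof; the shortening remark (that restricting to $m' < m$ still yields a perfect mapping) is then an instance of \Lref{derived-maps}, removing the largest-degree left vertex $m$ together with its block $B_m$. The main ``obstacle'' is really just choosing the block structure so that the highest-bit index of $j$ equals its assigned left vertex — once that alignment is in place, the verification is a one-line bit argument rather than an induction.
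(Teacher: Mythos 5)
Your construction is exactly the paper's: the same mapping $\varphi(\zero)=\zero$, $\varphi(\bbb(j))=\eee_j$, and your direct verification via the highest-set-bit blocks $B_i$ is precisely the ``related argument'' that the paper leaves unstated, with the correct left degrees $2^{i-1}$ (the paper's stated $\delta_i=2^i$ cannot sum to $2^m-1$ and is evidently a typo). One small caveat: inverting $\nu(m,1)=2^m-1$ via the monotonicity lemmas yields $\mu(n,1)=\lfloor\log_2(n+1)\rfloor$ rather than $\lceil\log_2(n+1)\rceil$ (the two agree exactly when $n+1$ is a power of two, e.g.\ $\mu(4,1)=2$, not $3$), so this mismatch lies in the theorem's printed formula, not in your argument, which in fact proves the correct floor version.
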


A construction of domination mappings for $w=2$ is given in 
Appendix\,\ref{appendix-A}. This construction attains~the 
bound of \Lref{lem:sum_cond} when $m$ is
odd. In fact, using these results 
and Section\,\ref{sec:proof}, we determine $\nu(m,2)$~for~all~$m$.\vspace{2.70ex}

\section{Polynomial-Time Algorithm}
\label{sec:algorithm}

In this section, we present a polynomial-time algorithm that, given any $m$, $n$, and $w$, 
determines whether an $(m,n,w)$-domination mapping exists for a domination graph $G$ with an equitable degree distribution. 
To do so, we introduce a certain bipartite graph associated to $G$,  
which we call the \emph{compatibility graph}. 

\begin{definition}
\label{definition-compatibility}
Given a domination graph $G=([m]\cup[n],E)$, the {\bfit compatibility graph} defined by $G$ is the bipartite graph whose vertex set is $\{0,1\}^m\cup \cB(n,w)$. There is an edge from $\xxx\in \{0,1\}^m$ to $\yyy\in \cB(n,w)$ 
if and only if for $(i,j) \in E$, we have that $x_i = 0$ implies $y_j = 0$.
\end{definition}

Fix a domination graph $G=([m]\cup[n],E)$.
For brevity, we write $\cS\deff \{0,1\}^m$ and $\cR\deff \cB(n,w)$.
Also, for $\xxx\in \cS$ and $\yyy\in \cR$, we say that $\xxx$ {\em dominates} $\yyy$ if and only if 
for $(i,j) \in E$, we have that $x_i = 0$ implies $y_j = 0$. 
Hence, the compatibility graph defined by $G$ has vertex set $\cS\cup\cR$ and 
$\xxx\in \cS$ is adjacent to $\yyy\in \cR$ if and only if $\xxx$ dominates $\yyy$.
The next theorem states that the existence of a $G$-domination mapping is equivalent to a certain graph theoretic property of the compatibility graph defined by $G$.

\begin{theorem}
\label{thm:sub_dominate}
Let $H$ be the compatibility graph defined by  domination graph $G=([m]\cup [n],E)$.
There exists a $G$-domination mapping if and only if there exists a subgraph of $H$ such the degree of each vertex in $\cS$ is exactly one and the degree of each vertex in $\cR$ is at most one.
\end{theorem}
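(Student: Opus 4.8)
The plan is to prove the theorem by establishing a direct bijection-style correspondence between $G$-domination mappings and the subgraphs of $H$ described in the statement. A $G$-domination mapping $\varphi\colon \bits^m \to \cB(n,w)$ is, by \Dref{domination-def}, precisely an injective function from $\cS$ to $\cR$ with the property that each $\xxx \in \cS$ dominates its image $\varphi(\xxx)$. The key observation is that the graph of such a function --- the set of pairs $\{(\xxx,\varphi(\xxx)) : \xxx \in \cS\}$ --- is exactly a subgraph of $H$ in which every vertex of $\cS$ has degree one (this encodes that $\varphi$ is a well-defined total function whose edges lie in $H$, i.e. each $\xxx$ dominates $\varphi(\xxx)$), and every vertex of $\cR$ has degree at most one (this encodes injectivity). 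So the two notions are literally the same object viewed two ways.

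Concretely, for the forward direction I would take a $G$-domination mapping $\varphi$ and define the edge set $M = \{\{\xxx,\varphi(\xxx)\} : \xxx \in \cS\}$. Since $\varphi$ is $G$-dominating, each such pair is an edge of $H$ by \Dref{definition-compatibility}, so $M$ is a subgraph of $H$. Every $\xxx\in\cS$ lies in exactly one edge of $M$, namely $\{\xxx,\varphi(\xxx)\}$, so its degree is one; and if some $\yyy\in\cR$ had degree two, there would be distinct $\xxx,\xxx'$ with $\varphi(\xxx)=\yyy=\varphi(\xxx')$, contradicting injectivity, so every $\yyy\in\cR$ has degree at most one. For the converse, given such a subgraph $M$, the degree-one condition on $\cS$ lets me define $\varphi(\xxx)$ to be the unique neighbour of $\xxx$ in $M$; this is total and well-defined on $\cS$. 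Being an edge of $H$ means $\xxx$ dominates $\varphi(\xxx)$, which is exactly the domination condition of \Dref{domination-def}; and the degree-at-most-one condition on $\cR$ forces $\varphi$ to be injective. Hence $\varphi$ is a $G$-domination mapping.

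I do not expect a genuine obstacle here: the theorem is essentially a restatement, and the only care needed is to unpack the definitions cleanly and to note that ``domination mapping'' already bundles injectivity together with the edgewise domination condition, so nothing about the range $\cB(n,w)$ needs separate attention (it is built into the vertex set $\cR$ of $H$). The one point worth stating explicitly is why the degree-one (rather than at-most-one) requirement on $\cS$ is correct: a domination mapping must be defined on \emph{all} of $\bits^m$, so every vertex of $\cS$ must be matched, whereas the range may contain unused words, whence the asymmetric degree conditions. This asymmetry is exactly what makes the subsequent matching-theoretic arguments (Hall's theorem, the K\"onig--Egev\'ary theorem) apply to the compatibility graph, so it is worth flagging for the reader.
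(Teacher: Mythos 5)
Your proposal is correct and follows essentially the same argument as the paper: both directions are handled by identifying the edge set $\{(\xxx,\varphi(\xxx)) : \xxx \in \cS\}$ with the required subgraph of $H$, using injectivity for the degree condition on $\cR$ and the domination relation for membership in $H$. Your extra remark explaining the asymmetric degree conditions is a nice clarification but does not change the substance.
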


\begin{proof}
Let $\varphi:\cS\to \cR$ be a $G$-dominating mapping. We define the subgraph $H'=(\cS\cup\cR,E')$ where
$E'=\{(\xxx,\varphi(\xxx)): \xxx\in \cS\}$. Since $\xxx$ dominates $\varphi(\xxx)$, $H'$ is a subgraph of $H$.
Clearly, the degree of every vertex in $\cS$ is exactly one. Since $\varphi$ is an injection, the degree of every vertex in $\cR$ is at most one.

Conversely, suppose that there exists such a subgraph $H'=(\cS\cup\cR,E')$.
For each $\xxx\in \cS$, define $\varphi(\xxx)$ to be the unique vertex in $\cR$ adjacent to $\xxx$.
Then it is readily verified that $\varphi$ is a $G$-domination mapping.
\end{proof}

%
%
%
%
%
%
%
%

Therefore, following Theorem~\ref{thm:sub_dominate}, our task is reduced to determining the existence of a perfect matching in the bipartite compatibility graph defined by $G$. 
To this end, we recall the famous K\"onig-Egev\'ary theorem~\cite{Konig} for general graphs.
Let $\cG=(V,E)$ be a graph and we define the following quantities via certain integer optimization problems.
\begin{align}
M(\cG)
&\deff \max \left\{ \sum_{e\in E} X_e : 
X_e\in \{0,1\} \mbox{ for all }e\in E, \sum_{e \mbox{ incident to } v } X_e\le 1 \mbox{ for all }v\in V \right\},\label{matching}\\
C(\cG)
&\deff \min \left\{ \sum_{v\in V} Y_v : 
Y_v\in \{0,1\} \mbox{ for all }v\in V, \sum_{v \mbox{ incident to  } e } Y_v\ge 1 \mbox{ for all }e\in E \right\}.\label{cover}
\end{align}
Then $M(\cG)$ and $C(\cG)$ correspond to the sizes of the {\em maximum matching} and {\em minimum vertex cover}, respectively. Via weak duality, we have that $M(G)\le C(G)$. 
Next, we consider the {\em relaxed} or {\em fractional} versions of the optimization problems.
\begin{align}
M^*(\cG)
&\deff \max \left\{ \sum_{e\in E} X_e : 
0\le X_e\le 1 \mbox{ for all }e\in E, \sum_{e \mbox{ incident to } v } X_e\le 1 \mbox{ for all }v\in V \right\},\label{matching-relaxed}\\
C^*(\cG)
&\deff \min \left\{ \sum_{v\in V} Y_v : 
0\le Y_v\le 1 \mbox{ for all }v\in V, \sum_{v \mbox{ incident to } e } Y_v\ge1 \mbox{ for all }e\in E \right\}.\label{cover-relaxed}
\end{align}
We refer to $M^*(\cG)$ and $C^*(\cG)$ as the {\em maximum fractional matching} and {\em minimum fractional vertex cover}, respectively. Then we have the following inequality,
$M(\cG)\le M^*(\cG)\le C^*(\cG)\le C(\cG)$.

For bipartite graphs, K\"onig-Egev\'ary theorem~\cite{Konig} states that $M(G)=C(G)$. 
In other words, the maximum matching and the maximum fractional matching in a bipartite graph have the same size. 
Specifically,
\[M(\cG)= M^*(\cG)= C^*(\cG)= C(\cG).\]

\subsection{Maximum matching in compatibility graphs}

Recall that the vertex set in our compatibility graph is $V=\cS\cup \cR$ while 
the edge set is $E=\{(\uuu,\vvv) \in \cS \times \cR: \uuu \mbox{ dominates } \vvv\}$.
Then the size of maximum matching is given by the following linear program
\begin{equation}\label{comp:lp}
\max \left\{\sum_{e\in E}X_e : \bA\bX\le \vone,\bX\ge \vzero \right\},
\end{equation}
where $\bA$ is a matrix whose rows are indexed by $V$ and columns are indexed by $E$.
Here, $\bA(v,e)=1$ if $v$ is incident to $e$, and $\bA(v,e)=0$, otherwise.

\begin{example}\label{exa:A}
Set $m=2$, $n=4$, and $w=1$.
Then $\cR=\{00,01,10,11\}$ and $\cS=\{0000,0001,0010,0100,1000\}$.
Here, $V=\cS\cup \cR$, and $E=\{e_i: i\in [12]\}$, where 
\begin{align*}
e_1 &\deff (00,0000),\\
e_2 &\deff (01,0000),&
e_3 &\deff (01,0001),&
e_4 &\deff (01,0010),\\
e_5 &\deff (10,0000),&
e_6 &\deff (10,0100),&
e_7 &\deff (10,1000),\\
e_8 &\deff (11,0000),&
e_9 &\deff (11,0001),&
e_{10} &\deff (11,0010),&
e_{11} &\deff (11,0100),&
e_{12} &\deff (11,1000).
\end{align*}
Then the matrix $\bA$ is given by 
\[
\bA=\left(
\begin{array}{c ccc ccc ccccc}
1& 0&0&0& 0&0&0& 0&0&0&0&0\\
0& 1&1&1& 0&0&0& 0&0&0&0&0\\
0& 0&0&0& 1&1&1& 0&0&0&0&0\\
0& 0&0&0& 0&0&0& 1&1&1&1&1\\
1& 1&0&0& 1&0&0& 1&0&0&0&0\\
0& 0&1&0& 0&0&0& 0&1&0&0&0\\
0& 0&0&1& 0&0&0& 0&0&1&0&0\\
0& 0&0&0& 0&1&0& 0&0&0&1&0\\
0& 0&0&0& 0&0&1& 0&0&0&0&1
\end{array}
\right)\, .
\]
Set
\[\bX=\left(1,0,\frac12,\frac12, 0,\frac12,\frac12, 0, \frac14,\frac14,\frac14,\frac14\right).\]
We check that $\bX$ is indeed a feasible vector whose objective value is four. 
Since the size of a matching in the compatibility graph is at most four, 
we find that size of maximum matching in this graph is four.
\end{example}

Unfortunately, the number of variables in the program \eqref{comp:lp} is 
given by $|E|=\Theta(2^{2m-1})$ when $m\ge 2w$.
Indeed, for $\uuu\in \{0,1\}^m$, when $\wt(\uuu)\ge w$, $\uuu$ dominates all words in $\cB(n,w)$. 
Since $|\cR|=|\cB(n,w)|\ge |\{0,1\}^m|=2^m$,
we have that $|E|\ge (\sum_{j=w}^m \binom{m}{j})|\cR|\ge 2^{m-1}\cdot 2^m=2^{2m-1}$.
In other words, to determine the existence of a mapping by solving this linear programme 
requires time {\em exponential} in $m$.
In what follows, we reduce the running time to {\em polynomial} in $m$ and $w$.

In do so, we notice that many entries in the optimal solution of Example~\ref{exa:A} are identical.
This hints that we may reduce the number of variables in the program and 
we do so by exploiting certain symmetries of the linear program.

\subsection{Symmetries of the linear program}

Let $M$ and $N$ be integers and let $A$ be an $M\times N$ matrix.
Consider a linear program of the form
\[\max \left\{\sum_{i\in [N]}x_i : A\xxx\le \vone,\xxx\ge \vzero \right\}.\]

Let $\pi:[N]\to [N]$ be a permutation on the set of $N$ variables. 
For any vector $\xxx\in \mathbb{R}^N$, let $\xxx^\pi$ denote the vector whose $i$th component is given by $x_{\pi(i)}$.
Let $P_\pi$ denote the binary matrix that represents the permutation $\pi$ and 
hence, by definition, we have that $P_\pi \xxx=\xxx^\pi$.

A permutation on the $N$ variables can be regarded as a permutation on the $N$ columns of $A$.
We also consider a permutation $\pi_{\rm row}$ on the $M$ rows of $A$ and
similarly let $P_{\pi_{\rm row}}$ denote the binary matrix that represents the permutation $\pi_{\rm row}$.

\begin{definition}
A permutation $\pi: [N]\to [N]$ is {\bfit $A$-preserving}
if $P_{\pi_{\rm row}}A P_\pi=A$ for some permutation $\pi_{\rm row}: [M] \to [M]$.
\end{definition}

This definition of $A$-preserving permutations can be found in linear programming literature 
(see Margot \cite{Margot:2003}, and B{\H o}di and Herr \cite{Bodi:2009}),
where the authors exploit symmetries to reduce the dimension of their linear programs.
In particular, B{\H o}di and Herr demonstrated Proposition \ref{prop:reduction} in a more general setting.
Independently, Fazelli {\em et al.} obtained Proposition \ref{prop:reduction} in the specialized setting of finding a fractional transversal in hypergraphs. For completeness, we rederive Proposition \ref{prop:reduction} in 
Appendix~\ref{app:reduction}.

Let $G_A$ denote a subgroup of the group of all $A$-preserving permutations and let $G_A$ act on $[N]$.
Suppose that the collection of orbits under this action is $\cO=\{O_1, O_2,\ldots, O_n\}$.
A vector $\xxx$ is defined to be {\em $\cO$-regular} if for all $k\le N$, $x_i=x_j$ for all $i,j\in O_k$.

\begin{proposition}\label{prop:reduction}
Suppose that $A\xxx\le \vone$ and $\sum_{i=1}^N x_i=\lambda$.
Then there exists an $\cO$-regular vector $\xxx^*$ such that 
$A\xxx^\pi\le \vone$ and $\sum_{i=1}^N x_i^*=\lambda$.
\end{proposition}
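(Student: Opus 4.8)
\textbf{Proof plan for Proposition \ref{prop:reduction}.}
The plan is to produce $\xxx^*$ by averaging $\xxx$ over the group $G_A$. First I would set
\[
\xxx^* ~\deff~ \frac{1}{|G_A|}\sum_{\pi\in G_A} \xxx^{\pi},
\]
i.e.\ the $i$th coordinate of $\xxx^*$ is the average of $x_{\pi(i)}$ as $\pi$ ranges over $G_A$. Three things then need to be checked: that $\xxx^*$ is $\cO$-regular, that $\sum_i x^*_i = \lambda$, and that $A\xxx^* \le \vone$. (I note that the proposition as stated writes $A\xxx^\pi \le \vone$ and $\sum x^*_i = \lambda$; the intended reading is $A\xxx^* \le \vone$, with $\xxx^*$ the regularized vector, and this is what I would prove.)

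For $\cO$-regularity: if $i,j$ lie in the same orbit $O_k$, pick $\sigma\in G_A$ with $\sigma(i)=j$. Then $x^*_j = \frac{1}{|G_A|}\sum_{\pi} x_{\pi(j)} = \frac{1}{|G_A|}\sum_{\pi} x_{(\pi\sigma)(i)}$, and as $\pi$ runs over $G_A$ so does $\pi\sigma$, so this sum equals $x^*_i$. For the objective value: $\sum_{i\in[N]} x^*_i = \frac{1}{|G_A|}\sum_{\pi}\sum_{i} x_{\pi(i)} = \frac{1}{|G_A|}\sum_{\pi}\sum_i x_i = \lambda$, since each $\pi$ is a bijection of $[N]$. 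For feasibility I would use convexity: since $\xxx^*$ is the average of the vectors $\xxx^\pi$, it suffices to show each $\xxx^\pi$ is feasible, i.e.\ $A\xxx^\pi \le \vone$. This is where the $A$-preserving hypothesis enters. By definition there is a row permutation $\pi_{\rm row}$ with $P_{\pi_{\rm row}} A P_\pi = A$, equivalently $A P_\pi = P_{\pi_{\rm row}}^{-1} A$. Hence $A\xxx^\pi = A P_\pi \xxx = P_{\pi_{\rm row}}^{-1} A\xxx \le P_{\pi_{\rm row}}^{-1}\vone = \vone$, where the inequality uses $A\xxx\le\vone$ together with the fact that a permutation matrix has nonnegative entries (so it preserves the componentwise order) and fixes the all-ones vector. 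Averaging over $\pi\in G_A$ preserves the inequality, giving $A\xxx^* \le \vone$.

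The only genuinely delicate point is the feasibility step, and specifically keeping the bookkeeping of the two permutation matrices straight: one must be careful that ``$\pi$ permutes the columns of $A$'' matches the convention $P_\pi\xxx = \xxx^\pi$ used in the proof, so that $A\xxx^\pi$ really does equal $AP_\pi\xxx$ and not $AP_\pi^{-1}\xxx$. Once the convention is pinned down, the argument is just the observation that $A$-preservation lets one transfer the constraint violation (if any) from a column permutation to a mere relabeling of the rows, which cannot affect the all-ones right-hand side. Everything else — $\cO$-regularity and the objective value — is a routine symmetrization/averaging computation, and no sharper tool than averaging over a finite group is needed.
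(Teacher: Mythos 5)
Your proposal is correct and follows essentially the same route as the paper: average $\xxx$ over $G_A$, verify feasibility of each $\xxx^\pi$ via the identity $AP_\pi = P_{\pi_{\rm row}}^{-1}A$ (the paper isolates this as Lemma~\ref{lem:Apreserving}), and check $\cO$-regularity and the objective value by the same group-translation and bijection arguments.
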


Therefore, applying Proposition~\ref{prop:reduction}, we establish the following equality.

\[\max \left\{\sum_{i\in [N]}x_i : A\xxx\le \vone,\xxx\ge \vzero \right\}=
\max \left\{\sum_{i\in [N]}x_i : A\xxx\le \vone,\xxx\ge \vzero, \xxx \mbox{ is $\cO$-regular}\right\}.\]

In other words, we reduce the number of variables from $N$ to $n$.
We may then rewrite the linear program as follows:
\[ \max \left\{  \sum_{i\in [n]} |O_i|x^*_i : A^*\xxx^*\le \vone, \xxx^*\ge \vzero   \right\}, \]
where $A^*$ is an $M\times n$ matrix defined by
\[A^*(i,k)= \sum_{j\in O_k} A(i,j) \mbox{ for all $i\in[M], \, k\in[n]$}.\]

\subsection{Reducing the dimension of the linear program defined by \eqref{comp:lp}}

To exploit the symmetries of our linear program, we consider a domination graph with an equitable degree distribution.
Specifically, for given values of $m$ and $n$, we set $\delta=\trunc{n/m}$, and
\begin{align*}
m_1&= n\bmod{m}, & m_2 &= m-m_1, \\ 
n_1&= m_1(\delta+1), & n_2 &= m_2\delta.  
\end{align*}
We partition $[m]$ into $I_1\deff [m_1]$ and $I_2\deff [m_1+1,m]$ 
(here, $[i,j]$ denotes the set $\{i,i+1,\ldots, j\}$)
and assign the vertices in $I_1$ and $I_2$ the degrees $\delta+1$ and $\delta$, respectively.
Hence, in this domination graph, there are $m_1$ vertices of degree $\delta+1$ and $m_2$ vertices of degree $\delta$.

We also partition $[n]$ into $m$ groups:
\[ J_i\deff \begin{cases}
[(i-1)(\delta+1)+1,i(\delta+1)], & \mbox{if $i\in [m_1]$},\\
[n_1+(i-m_1-1)\delta+1,n_1+(i-m_1)\delta], & \mbox{otherwise}.
\end{cases}\]
In other words, we partition $[n_1]$ into $m_1$ groups of size $\delta+1$ and 
$[n_1+1,n]$ into $m_2$ groups of size $\delta$.

Next, we define a group of $\bA$-preserving permutations. 
Here, let $\bbS_X$ denote the set of permutations on the set $X$
and we first produce permutations in $\bbS_{\cS}$ and $\bbS_{\cR}$.
Consider a permutation $\gamma\in \bbS_{I_1}\times \bbS_{I_2}$.
Since $I_1$ and $I_2$ partition $[m]$, the permutation $\gamma$ belongs to $\bbS_{[m]}$
and we define $\pi^{\cS}_\gamma\in \bbS_{\cS}$ such that $\pi^{\cS}_\gamma(\uuu)=\uuu^\gamma$ for $\uuu\in \cS$.
We then consider the subset
\[\bbB=\{\beta\in\bbS_{\cR}: \beta(\vvv)|_{J_i}\ne \vzero \mbox{ if and only if } \vvv|_{J_i}\ne \vzero \mbox{ for all } i\in [m],\, \vvv\in \cS\}.\] 
It can be verified that $\bbB$ is a subgroup of $\bbS_{\cR}$.
Given $\gamma\in \bbS_{I_1}\times \bbS_{I_2}$ and $\beta\in\bbB$, we define 
$\pi^{{\cR}}_{\gamma,\beta}\in \bbS_{\cR}$ to be the permutation such that $\pi^{{\cR}}_{\gamma,\beta}(\vvv)$ 
is the word obtained rearranging the $m$ subwords in $\beta(\vvv)$ in accordance to $\gamma$.
Finally, we obtain a permutation $\pi_{\gamma,\beta}$ on ${\cS}\times {\cR}$ by simply setting 
$\pi_{\gamma,\beta}(\uuu,\vvv)=\left(\pi^{{\cS}}_\gamma(\uuu), \pi^{{\cR}}_{\gamma,\beta}(\vvv)\right)$.
Set $\bbG=\{\pi_{\gamma,\beta}: \gamma\in \bbS_{I_1}\times \bbS_{I_2}, \beta\in \bbB\}$.
Abusing notation, we simply write $\bbG=(\bbS_{I_1}\times \bbS_{I_2})\times \bbB$.

\begin{example}\label{exa:A1}\hfill
\begin{enumerate}[(i)]
\item (Example \ref{exa:A} continued.) Consider $m=2$, $n=4$, and $w=1$. 
Then $\delta=2$, $m_1=n_1=0$, $m_2=2$ and $n_2=4$.

Label the words in $\cR$ as follow: $\vvv_0=0000$, $\vvv_1=0001$, $\vvv_2=0010$, $\vvv_3=0100$, and
$\vvv_4=1000$. So, $\bbB=\{(\vvv_0,\vvv_1,\vvv_2,\vvv_3,\vvv_4),(\vvv_0,\vvv_2,\vvv_1,\vvv_3,\vvv_4),(\vvv_0,\vvv_1,\vvv_2,\vvv_4,\vvv_3),(\vvv_0,\vvv_2,\vvv_1,\vvv_4,\vvv_3)\}$.
Since $m_1=0$, $\bbG=\bbS_{[2]}\times \bbB$.

Let $\gamma=(2,1)$ and $\beta=(\vvv_0,\vvv_2,\vvv_1,\vvv_3,\vvv_4)$. 
Then $\pi^{\cS}_\gamma(01)=10$, $\pi^{\cR}_{\gamma,\beta}(0001)=1000$ and 
so, $\pi_{\gamma,\beta}(01,0001)=(10,1000)$.
Consider the orbit of $(01,0001)$ which is $\{\pi(01,0001): \pi\in \bbG\}$. 
The orbit is given by \[\{(01,0001), (01,0010),(10,0100),(10,1000)\}.\]

\item Consider $m=3$, $n=4$, and $w=2$. 
Then $\delta=1$, $m_1=1$, $m_2=2$, $n_1=2$ and $n_2=2$.

The orbit of $(101,0101)$ is given by $\{(101,0101), (101,1001),(110,0110),(110,1010)\}$,
while the orbit of $(100,0100)$ is given by $\{(100,0100), (100,1000)\}$. 

\end{enumerate}
\end{example}

Recall that the columns of $\bA$ are indexed by $E\subseteq \cS \times \cR$,
while $\bbG$ is a subgroup of $\bbS_{\cS\times \cR}$.
In the next lemma, we show that $\bbG$ may be regarded as a subgroup of $\bbS_E$.
We do this by showing that the image of $E$ under the permutation $\pi_{\gamma,\beta}$ remains as $E$.

\begin{lemma}
Let $\pi_{\gamma,\beta}\in \bbG$, $\uuu\in \cS$ and $\vvv\in \cR$. Then $\uuu$ dominates $\vvv$
if and only if $\pi^{\cS}_{\gamma}(\uuu)$ dominates $\pi^{\cR}_{\gamma,\beta}(\vvv)$. 
\end{lemma}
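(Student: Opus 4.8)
The plan is to unwind the definitions of $\pi^{\cS}_\gamma$ and $\pi^{\cR}_{\gamma,\beta}$ and show that the domination relation is invariant under the simultaneous action. First I would set up notation: write $\uuu' = \pi^{\cS}_\gamma(\uuu) = \uuu^\gamma$, so that $u'_k = u_{\gamma(k)}$ for $k \in [m]$, and write $\vvv^* = \pi^{\cR}_{\gamma,\beta}(\vvv)$. By the construction of $\pi^{\cR}_{\gamma,\beta}$, the word $\vvv^*$ is obtained by first applying $\beta$ to $\vvv$ and then permuting the $m$ blocks $J_1,\ldots,J_m$ according to $\gamma$; since $\beta \in \bbB$ preserves, for each block $J_i$, the property of being zero or nonzero on that block, the key fact I will extract is: for every $i \in [m]$, the restriction $\vvv^*|_{J_i}$ is zero if and only if $\vvv|_{J_{\gamma(i)}}$ is zero. (More precisely, $\vvv^*|_{J_i}$ is a permuted copy of $\beta(\vvv)|_{J_{\gamma(i)}}$, which is zero iff $\vvv|_{J_{\gamma(i)}}$ is zero.)

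Next I would recall the structure of the equitable domination graph $G$ used here: each left vertex $i \in [m]$ has degree $\delta$ or $\delta+1$, and the edges incident to $i$ go exactly to the positions in $J_i$; thus by \Dref{definition-compatibility}, $\uuu$ dominates $\vvv$ if and only if for every $i \in [m]$, $u_i = 0$ implies $\vvv|_{J_i} = \vzero$. Equivalently, $\uuu$ dominates $\vvv$ iff $\{i : u_i = 0\} \subseteq \{i : \vvv|_{J_i} = \vzero\}$, i.e.\ iff the support-on-blocks of $\vvv$ contains every position where $\uuu$ is $0$ — contrapositively, $\vvv|_{J_i} \ne \vzero \implies u_i \ne 0$ for all $i$.

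Now I would carry out the main computation. Fix $i \in [m]$. Using the facts assembled above, $u'_i = 0 \iff u_{\gamma(i)} = 0$, and $\vvv^*|_{J_i} = \vzero \iff \vvv|_{J_{\gamma(i)}} = \vzero$. Hence the implication ``$u'_i = 0 \implies \vvv^*|_{J_i} = \vzero$'' is logically identical to the implication ``$u_{\gamma(i)} = 0 \implies \vvv|_{J_{\gamma(i)}} = \vzero$''. Since $\gamma$ is a bijection of $[m]$, as $i$ ranges over $[m]$ so does $\gamma(i)$, so the full conjunction over all $i$ for $(\uuu',\vvv^*)$ is equivalent to the full conjunction over all $i$ for $(\uuu,\vvv)$. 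Therefore $\uuu$ dominates $\vvv$ if and only if $\pi^{\cS}_\gamma(\uuu)$ dominates $\pi^{\cR}_{\gamma,\beta}(\vvv)$, which is exactly the claim; this simultaneously shows $\pi_{\gamma,\beta}$ maps $E$ into $E$, and being a bijection, onto $E$, so $\bbG \le \bbS_E$.

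The only subtle point — and the one I would be most careful about — is the precise bookkeeping in the definition of $\pi^{\cR}_{\gamma,\beta}$: one must confirm that ``rearranging the $m$ subwords of $\beta(\vvv)$ according to $\gamma$'' genuinely sends the block that ends up in position $J_i$ to be (a permuted copy of) $\beta(\vvv)|_{J_{\gamma(i)}}$ rather than $\beta(\vvv)|_{J_{\gamma^{-1}(i)}}$; the direction of the index matters for matching it against $u'_i = u_{\gamma(i)}$, though since $\gamma$ is a bijection the final equivalence of conjunctions holds either way. I would also note that $I_1, I_2$ are $\gamma$-invariant and the block sizes within $I_1$ (all $\delta+1$) and within $I_2$ (all $\delta$) are constant, so the block rearrangement is well-defined as a permutation of coordinates of $\cR$ — this is exactly why the equitable structure is needed.
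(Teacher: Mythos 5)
Your proof is correct and is essentially the paper's own argument: the paper also reduces domination to the blockwise condition (via the block support $\supp_\delta(\vvv)=\{i: \vvv|_{J_i}\ne\vzero\}$), uses that $\beta\in\bbB$ preserves block supports, and that $\gamma$ permutes the $\cS$-coordinates and the $\cR$-blocks identically, so the inclusion/implication is preserved (the paper does one direction and invokes the group structure, while you prove the equivalence directly — an immaterial difference). One small caveat: your aside that ``the final equivalence of conjunctions holds either way'' is not actually true — if the blocks were rearranged by $\gamma^{-1}$ while the coordinates of $\uuu$ move by $\gamma$, the image condition would assert blockwise domination of $\vvv$ by $\uuu^{\gamma^2}$, which is not equivalent in general — but this does not affect your main computation, which correctly uses the consistent convention intended by (and needed for) the paper's definition of $\pi^{\cR}_{\gamma,\beta}$.
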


\begin{proof}
Since $\bbG$ is a group, it suffices to prove in one direction.
Let $\supp_{\delta}(\vvv)\deff \{i\in [m]: \vvv|_{J_i}\ne \vzero\}$. 
Since $\uuu$ dominates $\vvv$, we have that $\supp(\uuu)\supseteq \supp_\delta(\vvv)$.

Since $\beta\in \bbB$, we have that 
$\supp_\delta(\vvv)=\supp_\delta(\beta(\vvv))$. 
From the definition of $\pi^{\cS}_{\gamma}$ and $\pi^{\cR}_{\gamma,\beta}$,
we have that $\supp(\pi^{\cS}_{\gamma}(\uuu))=\gamma(\supp(\uuu))$ and 
$\supp_\delta(\pi^{\cR}_{\gamma,\beta}(\vvv))=\gamma(\supp_\delta(\beta(\vvv)))=\gamma(\supp_\delta(\vvv))$.
Therefore, $\supp(\pi^{\cS}_{\gamma}(\uuu))\supseteq \supp_\delta(\pi^{\cR}_{\gamma,\beta}(\vvv))$ and 
so, $\pi^{\cS}_{\gamma}(\uuu)$ dominates $\pi^{\cR}_{\gamma,\beta}(\vvv)$.
\end{proof}

Therefore, since $E=\{(\uuu,\vvv)\in {\cS}\times {\cR}: \mbox{$\uuu$ dominates $\vvv$}\}$, 
we have that $\bbG$ is a subgroup of $\bbS_E$.

\begin{lemma}
Every permutation in $\bbG$ is a $\bA$-preserving permutation.
\end{lemma}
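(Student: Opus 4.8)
The plan is to verify the definition of $\bA$-preserving directly: for each $\pi_{\gamma,\beta} \in \bbG$ (acting as a permutation of the columns $E$), exhibit a companion permutation of the rows $V = \cS \cup \cR$ such that simultaneously permuting rows and columns of $\bA$ yields $\bA$ back. The natural candidate for the row permutation is the one induced by $\pi_{\gamma,\beta}$ on the vertex set $V$ itself: namely $\pi^{\cS}_\gamma$ on the $\cS$-rows and $\pi^{\cR}_{\gamma,\beta}$ on the $\cR$-rows. Since $\bbG$ is a group (and in particular $\pi^{\cS}_\gamma$ permutes $\cS$ and $\pi^{\cR}_{\gamma,\beta}$ permutes $\cR$), this is a genuine permutation $\pi_{\rm row}$ of $V$.

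First I would recall that $\bA(v,e) = 1$ precisely when $v$ is an endpoint of the edge $e$. So the claim $P_{\pi_{\rm row}} \bA P_{\pi_{\gamma,\beta}} = \bA$ unwinds to the assertion that, for every vertex $v \in V$ and every edge $e \in E$, the vertex $v$ is incident to $e$ if and only if $\pi_{\rm row}(v)$ is incident to $\pi_{\gamma,\beta}(e)$. Writing $e = (\uuu,\vvv)$, its image is $\pi_{\gamma,\beta}(e) = \bigl(\pi^{\cS}_\gamma(\uuu), \pi^{\cR}_{\gamma,\beta}(\vvv)\bigr)$, which by the previous lemma is again an edge of $H$. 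Now split into the two cases $v \in \cS$ and $v \in \cR$. If $v = \xxx \in \cS$, then $v$ is incident to $e = (\uuu,\vvv)$ iff $\xxx = \uuu$, and $\pi_{\rm row}(v) = \pi^{\cS}_\gamma(\xxx)$ is incident to $\pi_{\gamma,\beta}(e)$ iff $\pi^{\cS}_\gamma(\xxx) = \pi^{\cS}_\gamma(\uuu)$; since $\pi^{\cS}_\gamma$ is injective, these two conditions are equivalent. The case $v = \yyy \in \cR$ is identical, using that $\pi^{\cR}_{\gamma,\beta}$ is injective.

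Thus incidence is preserved coordinatewise, which is exactly the entrywise identity $(P_{\pi_{\rm row}} \bA P_{\pi_{\gamma,\beta}})(v,e) = \bA(v,e)$ for all $v,e$, establishing that $\pi_{\gamma,\beta}$ is $\bA$-preserving with witness row permutation $\pi_{\rm row}$. The one point that genuinely needs the preceding lemma is the verification that $\pi_{\gamma,\beta}$ really does permute the column index set $E$ (not merely $\cS \times \cR$): without that, $P_{\pi_{\gamma,\beta}}$ would not be well-defined as a permutation matrix on $E$. Since that lemma is already in hand, the main obstacle is purely bookkeeping — keeping straight which map acts on which side — and there is no substantive difficulty; the argument is a direct consequence of the fact that $\pi_{\gamma,\beta}$ restricted to each part $\cS$, $\cR$ is a bijection and sends edges to edges.
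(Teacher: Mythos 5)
Your proof is correct and follows essentially the same route as the paper: take the row permutation to be the vertex permutation induced by $\pi_{\gamma,\beta}$ on $\cS\cup\cR$ (the paper uses its inverse, which is only a matter of the permutation-matrix convention and is immaterial since $\bbG$ is a group), and verify the entrywise incidence identity, with the preceding lemma guaranteeing that $\pi_{\gamma,\beta}$ genuinely permutes the column index set $E$.
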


\begin{proof}
Let $\pi=\pi_{\gamma,\beta}\in \bbG$. 
To show that $\pi$ is $\bA$-preserving, it suffices to provide a permutation $\pi_{\rm row}$ on the rows of $\bA$
such that $P_{\pi_{\rm row}}\bA P_{\pi}=\bA$, or 
\begin{equation}\label{eq:rowandcol}
\bA P_{\pi}=P_{\pi_{\rm row}}^{-1}\bA=P_{\pi_{\rm row}^{-1}}\bA.
\end{equation}

Let $(\uuu,\vvv)\in E$ and $\zzz\in {\cS}\cup {\cR}$. 
Let the entry of $\bA$ at row $\zzz$ and column $(\uuu,\vvv)$, 
or the $(\zzz,(\uuu,\vvv))$th entry of $\bA$, be written as $\bA(\zzz,(\uuu,\vvv))$.
Then the $(\zzz,(\uuu,\vvv))$th entry of $\bA P_{\pi}$ is given by 
$\bA(\zzz,\pi^{-1}(\uuu,\vvv))$.

We next consider the permutations on the rows. 
Note that $\pi^{\cS}_\gamma \in\bbS_{{\cS}}$ and $\pi^{\cR}_{\gamma,\beta}\in \bbS_{\cR}$.
So, let $\bbG$ act on ${\cS}\cup {\cR}$ by setting $\pi(\zzz)=\pi^{\cS}_\gamma(\zzz)$ when $\zzz\in {\cS}$ and
$\pi(\zzz)=\pi^{\cR}_{\gamma,\beta}(\zzz)$ when $\zzz\in {\cR}$.
Set $\pi_{\rm row}=\pi^{-1}$, the inverse of $\pi$ in $\bbG$.
Then the $(\zzz,(\uuu,\vvv))$th entry of $P_{\pi_{\rm row}^{-1}}\bA$ is given by 
$\bA(\pi_{\rm row}^{-1}(\zzz),(\uuu,\vvv))=\bA(\pi(\zzz),(\uuu,\vvv))$.

Finally, to establish \eqref{eq:rowandcol} we show that 
$A(\zzz,\pi^{-1}(\uuu,\vvv))=A(\pi(\zzz),(\uuu,\vvv))$. 
This then follows from the fact that
\[
A(\zzz,(\uuu,\vvv))=\begin{cases}
1, &\mbox{if $\zzz=\uuu$ or $\zzz=\vvv$},\\
0, &\mbox{otherwise}. \end{cases}
\] 
\end{proof}

We next study the orbits of ${\cS}\times {\cR}$ under this group action of $\bbG$.
To this end, we define the $(m_1,m_2)$-weights and the $(n_1,n_2;\delta)$-weights of words in 
${\cS}$ and ${\cR}$, respectively. 
For $\uuu\in {\cS}$ and $\vvv\in {\cR}$, define 
\begin{align*}
\wts(\uuu) &=(\sigma_1,\sigma_2), \mbox{ where } \sigma_i=\wt(\uuu|_{I_i})\mbox{ for }i\in[2],\\
\wtr(\vvv) &=(\rho_1,\rho_2), \mbox{ where } \rho_i=|\{j\in I_j: \vvv|_{J_j}\ne \vzero\}|\mbox{ for }i\in[2].
\end{align*}

Using these weights, we then characterise the orbits of ${\cS}\times {\cR}$ under this group action of $\bbG$.

\begin{lemma}\label{lem:wt-orbit}
Let $(\uuu,\vvv), (\uuu',\vvv')\in E$. 
$(\uuu,\vvv)$ and $(\uuu',\vvv')$ belong to the same orbit if and only if 
$\wts(\uuu)=\wts(\uuu')$ and $\wtr(\vvv)=\wtr(\vvv')$.
\end{lemma}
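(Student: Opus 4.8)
The plan is to prove the two implications separately. The ``only if'' direction is a direct check that the action of $\bbG$ preserves both weight functions, so it is enough to verify this for a single element $\pi_{\gamma,\beta}$ (and then invoke that $\bbG$ is a group). For the first coordinate, $\pi^{\cS}_\gamma(\uuu)=\uuu^\gamma$ and $\gamma\in\bbS_{I_1}\times\bbS_{I_2}$ permutes $I_1$ and $I_2$ within themselves, so $\wt(\uuu^\gamma|_{I_i})=\wt(\uuu|_{I_i})$ for $i\in[2]$; hence $\wts(\uuu^\gamma)=\wts(\uuu)$. For the second coordinate, the proof of the preceding lemma already records that $\supp_\delta(\pi^{\cR}_{\gamma,\beta}(\vvv))=\gamma(\supp_\delta(\vvv))$, and since $\gamma$ fixes $I_1$ and $I_2$ setwise we get $|\gamma(\supp_\delta(\vvv))\cap I_i|=|\supp_\delta(\vvv)\cap I_i|$, i.e. $\wtr(\pi^{\cR}_{\gamma,\beta}(\vvv))=\wtr(\vvv)$. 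Thus every vertex of $E$ in a common orbit has the same pair of weights.

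For the ``if'' direction, suppose $(\uuu,\vvv),(\uuu',\vvv')\in E$ with $\wts(\uuu)=\wts(\uuu')=(\sigma_1,\sigma_2)$ and $\wtr(\vvv)=\wtr(\vvv')=(\rho_1,\rho_2)$; I will produce $\pi_{\gamma,\beta}\in\bbG$ carrying the first edge to the second. The structural fact that makes this possible is that membership in $E$ forces $\supp_\delta(\vvv)\subseteq\supp(\uuu)$ and $\supp_\delta(\vvv')\subseteq\supp(\uuu')$; intersecting with $I_i$, the hypotheses give $|\supp_\delta(\vvv)\cap I_i|=|\supp_\delta(\vvv')\cap I_i|=\rho_i$ and $|\supp(\uuu)\cap I_i|=|\supp(\uuu')\cap I_i|=\sigma_i$, with $\rho_i\le\sigma_i$, the smaller set nested in the larger in each of the two (primed and unprimed) configurations. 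I would therefore define $\gamma=(\gamma_1,\gamma_2)\in\bbS_{I_1}\times\bbS_{I_2}$ by choosing, for each $i\in[2]$, a bijection $\gamma_i$ of $I_i$ that first maps $\supp_\delta(\vvv)\cap I_i$ onto $\supp_\delta(\vvv')\cap I_i$, then maps $(\supp(\uuu)\setminus\supp_\delta(\vvv))\cap I_i$ onto $(\supp(\uuu')\setminus\supp_\delta(\vvv'))\cap I_i$, and finally maps $I_i\setminus\supp(\uuu)$ onto $I_i\setminus\supp(\uuu')$; each of these three restrictions is between sets of equal cardinality ($\rho_i$, $\sigma_i-\rho_i$, and $m_i-\sigma_i$ respectively). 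By construction $\gamma(\supp(\uuu))=\supp(\uuu')$, hence $\uuu^\gamma=\uuu'$, and simultaneously $\gamma(\supp_\delta(\vvv))=\supp_\delta(\vvv')$.

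It remains to choose $\beta\in\bbB$ so that $\pi^{\cR}_{\gamma,\beta}(\vvv)=\vvv'$. Since the ``rearrange the $m$ subwords according to $\gamma$'' operation is a bijection of $\cR$ (the blocks $J_k$ for $k\in I_1$ all have size $\delta+1$ and those for $k\in I_2$ all have size $\delta$, so $\gamma$ only permutes blocks of equal length), requiring $\pi^{\cR}_{\gamma,\beta}(\vvv)=\vvv'$ is the same as requiring $\beta(\vvv)$ to be the word $\vvv^*$ obtained from $\vvv'$ by the inverse rearrangement of $\gamma$. That word satisfies $\wt(\vvv^*)=\wt(\vvv')\le w$, so $\vvv^*\in\cR$, and $\supp_\delta(\vvv^*)=\gamma^{-1}(\supp_\delta(\vvv'))=\supp_\delta(\vvv)$ by the previous paragraph. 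Hence $\vvv$ and $\vvv^*$ lie in a common fibre of the map $\supp_\delta(\cdot)$ on $\cR$; since $\bbB$ consists of exactly those permutations of $\cR$ that fix each such fibre setwise, there is $\beta\in\bbB$ with $\beta(\vvv)=\vvv^*$, and then $\pi_{\gamma,\beta}(\uuu,\vvv)=(\uuu^\gamma,\pi^{\cR}_{\gamma,\beta}(\vvv))=(\uuu',\vvv')$, so the two edges share an orbit.

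I expect the only real obstacle to be the simultaneous construction of $\gamma$ in the second paragraph: $\gamma$ must realize $\uuu\mapsto\uuu'$ while also routing the nonzero blocks of $\vvv$ to those of $\vvv'$ (the remaining, intra-block, adjustment of the subword contents being absorbed by $\beta$), and this is feasible precisely because edge membership supplies the nesting $\supp_\delta(\vvv)\subseteq\supp(\uuu)$ together with the matching cardinalities coming from $\wts(\uuu)=\wts(\uuu')$ and $\wtr(\vvv)=\wtr(\vvv')$. Everything else is routine bookkeeping.
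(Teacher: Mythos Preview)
Your proof is correct and follows essentially the same approach as the paper. The ``only if'' direction is identical. For the ``if'' direction, the paper proceeds via a canonical representative: it writes down explicit words $\uuu^*,\vvv^*$ of the prescribed weights and shows every $(\uuu,\vvv)$ with those weights lies in the orbit of $(\uuu^*,\vvv^*)$, then invokes the group property. You instead construct $\pi_{\gamma,\beta}$ mapping $(\uuu,\vvv)$ directly to $(\uuu',\vvv')$, building $\gamma$ in three pieces using the nesting $\supp_\delta(\vvv)\subseteq\supp(\uuu)$ forced by edge membership. This direct route is arguably cleaner, since it makes explicit the structural reason the simultaneous construction of $\gamma$ succeeds, whereas the paper's lexicographic-minimum description is somewhat terse. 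Both arguments then finish identically by choosing $\beta\in\bbB$ within the appropriate $\supp_\delta$-fibre.
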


\begin{proof}
If $(\uuu,\vvv)$ and $(\uuu',\vvv')$ belong to the same orbit, then $\pi_{\gamma,\beta}(\uuu,\vvv)=(\uuu',\vvv')$
for some $\pi_{\gamma,\beta}\in\bbG$. 
In other words, $\pi^{\cS}_{\gamma}(\uuu)=\uuu'$ and $\pi^{\cR}_{\gamma,\beta}(\vvv)=\vvv'$. 
Since $\gamma\in \bbS_{I_1}\times \bbS_{I_2}$, we have $\wt(\uuu|_{I_i})=\wt(\uuu'|_{I_i})$ for $i\in [2]$, and so, $\wts(\uuu)=\wts(\uuu')$.
Similarly, since $\beta\in \bbB$, we have $|\{j\in I_j: \vvv|_{J_j}\ne \vzero\}|=|\{j\in I_j: \vvv'|_{J_j}\ne \vzero\}|$ and 
hence, $\wtr(\vvv)=\wtr(\vvv')$.

Conversely, suppose that $\wts(\uuu)=\wts(\uuu')=(\sigma_1,\sigma_2)$ and $\wtr(\vvv)=\wtr(\vvv')=(\rho_1,\rho_2)$.
Consider the words 
\begin{align*}
\uuu^*&=\overbrace{00\cdots 0\underbrace{11\cdots 1}_{\sigma_1}}^{m_1}\overbrace{00\cdots 0\underbrace{11\cdots 1}_{\sigma_2}}^{m_2}\, ,\\
\vvv^*&=
\underbrace{\overbrace{0\cdots 0}^{\delta+1}\cdots\overbrace{0\cdots 0}^{\delta+1}}_{m_1-\rho_1}
\underbrace{\overbrace{0\cdots 01}^{\delta+1}\cdots\overbrace{0\cdots 01}^{\delta+1}}_{\rho_1}
\underbrace{\overbrace{0\cdots 0}^{\delta}\cdots\overbrace{0\cdots 0}^{\delta}}_{m_2-\rho_2}
\underbrace{\overbrace{0\cdots 01}^{\delta}\cdots\overbrace{0\cdots 01}^{\delta}}_{\rho_2}.
\end{align*}
Then $\wts(\uuu^*)=(\sigma_1,\sigma_2)$ and $\wtr(\vvv^*)=(\rho_1,\rho_2)$.
We find a permutation in $\bbG$ that maps $(\uuu^*,\vvv^*)$ to $(\uuu,\vvv)$.
Let $\gamma^{-1}$ be a permutation in $\gamma\in \bbS_{I_1}\times \bbS_{I_1}$
that rearranges the coordinates the $m$ coordinates of $\uuu$ and the $m$ subwords of $\vvv$
such that $(\uuu^{\gamma^{-1}},\hat{\vvv})$ is the lexicographical smallest amongst all permutations.
Let $\beta\in \bbB$ be a permutation that maps $\vvv^*$ to $\hat{\vvv}$.
Then $\pi_{\gamma,\beta}(\uuu^*,\vvv^*)=(\uuu,\vvv)$.
Since $\bbG$ is a subgroup of $\bbS_{E}$, 
we can then find a permutation that maps $(\uuu,\vvv)$ to $(\uuu',\vvv')$.
\end{proof}

Following Lemma \ref{lem:wt-orbit}, we can then index each orbit with the quadruple 
$(\sigma_1,\sigma_2,\rho_1,\rho_2)$, where
 $(\sigma_1,\sigma_2)=\wts(\uuu)$ and $(\rho_1,\rho_2)=\wtr(\vvv)$
 for some $(\uuu,\vvv)$ in the orbit. In particular, the index set is given by
 \[\Omega\deff \{(\sigma_1,\sigma_2,\rho_1,\rho_2): 
 \mbox{ $0\le \sigma_i\le m_i$ for $i\in[2]$, $0\le \rho_1\le \min\{\sigma_1,w\}$, 
 $0\le \rho_2\le \min\{\sigma_2,w-\rho_1\}$}\}.\]
 
Hence, the number of variables is reduced to $O(m^2w^2)$.
Besides reducing the number of variables, the group action also identifies certain constraints.
In particular, the $2^m+\sum_{j=0}^w \binom{n}{j}$ constraints are reduced to  $O(m^2+w^2)$ constraints.
Then by the equivalence of the linear programs, we are able to determine the existence of the 
desired mapping in time polynomial in $m$ and $w$.

Next, we compute the size of the orbits and then state the reduced linear program.
Let $0\le \rho_1,\rho_2\le w$ and $0\le \rho_1+\rho_2\le w$ and define the quantity 
\[C_{\rho_1\rho_2}\deff 
\sum_{\substack{w^{(1)}_{1}+\cdots+w^{(1)}_{\rho_1}+w^{(2)}_{1}+\cdots+w^{(2)}_{\rho_2}\le w\\w^{(i)}_j\ge 1}}
\prod_{j=1}^{\rho_1}\binom{\delta+1}{w^{(1)}_j}\prod_{j=1}^{\rho_2}\binom{\delta}{w^{(2)}_j}.\]
Here, $C_{\rho_1\rho_2}$ computes the number of words $\vvv$ in ${\cR}$ such that
$\{j\in I_1: \vvv|_{J_j}\ne \vzero\}=L_1$ and $\{j\in I_2: \vvv|_{J_j}\ne \vzero\}=L_2$
for some $\rho_1$-subset $L_1$ of $I_1$ and $\rho_2$-subset $L_2$ of $I_2$.
 
\begin{lemma}\label{lem:orbitsize}
Fix $(\sigma_1,\sigma_2,\rho_1,\rho_2)\in\Omega$.
\begin{enumerate}[(i)]
\item The number of pairs $(\uuu,\vvv)$ in $E$ with $\wts(\uuu)=(\sigma_1,\sigma_2)$ and  
$\wtr(\vvv)=(\rho_1,\rho_2)$
is $\binom{m_1}{\sigma_1}\binom{m_2}{\sigma_2}\binom{\sigma_1}{\rho_1}\binom{\sigma_2}{\rho_2}C_{\rho_1\rho_2}$.
\item Fix $\uuu\in {\cS}$ with $\wts(\uuu)=(\sigma_1,\sigma_2)$. 
The number of words $\vvv\in {\cR}$ that are dominated by $\uuu$ with $\wtr(\vvv)=(\rho_1,\rho_2)$
is $\binom{\sigma_1}{\rho_1}\binom{\sigma_2}{\rho_2}C_{\rho_1\rho_2}$.
\item Fix $\vvv\in {\cR}$ with $\wtr(\vvv)=(\sigma_1,\sigma_2)$. 
The number of words $\uuu\in {\cS}$ that dominates $\vvv$ with $\wts(\uuu)=(\sigma_1,\sigma_2)$
is $\binom{m_1-\rho_1}{\sigma_1-\rho_1}\binom{m_2-\sigma_2}{\sigma_2-\rho_2}$.
\end{enumerate}
\end{lemma}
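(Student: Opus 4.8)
The plan is to verify each of the three counting formulas directly, using the combinatorial meaning of the domination graph $G$ with equitable degree distribution. Throughout, recall that $\cS=\{0,1\}^m$ is partitioned by $I_1=[m_1]$ (degree $\delta+1$ vertices) and $I_2=[m_1+1,m]$ (degree $\delta$ vertices), while $\cR=\cB(n,w)$ is split into $m$ blocks $J_1,\dots,J_m$, with $J_i$ of size $\delta+1$ for $i\in[m_1]$ and size $\delta$ for $i\in[m_1+1,m]$; a pair $(\uuu,\vvv)\in E$ means $\supp(\uuu)\supseteq\supp_\delta(\vvv)$, where $\supp_\delta(\vvv)=\{i\in[m]:\vvv|_{J_i}\ne\vzero\}$.

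For part (ii), I would fix $\uuu$ with $\wts(\uuu)=(\sigma_1,\sigma_2)$ and count the $\vvv$ dominated by $\uuu$ with $\wtr(\vvv)=(\rho_1,\rho_2)$. Since $\uuu$ dominates $\vvv$ exactly when $\supp_\delta(\vvv)\subseteq\supp(\uuu)$, and since $\supp(\uuu)$ has $\sigma_1$ indices inside $I_1$ and $\sigma_2$ inside $I_2$, the set $\{j\in I_1:\vvv|_{J_j}\ne\vzero\}$ must be one of the $\binom{\sigma_1}{\rho_1}$ subsets of size $\rho_1$ inside $\supp(\uuu)\cap I_1$, and similarly $\binom{\sigma_2}{\rho_2}$ choices inside $I_2$. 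Once the $\rho_1+\rho_2$ ``active'' blocks are chosen, the word $\vvv$ is obtained by placing a nonzero pattern of weight $w^{(1)}_j\ge1$ in each active block $J_j$ with $j\in I_1$ (giving $\binom{\delta+1}{w^{(1)}_j}$ choices) and weight $w^{(2)}_j\ge1$ in each active block with $j\in I_2$ ($\binom{\delta}{w^{(2)}_j}$ choices), subject to the total weight constraint $\sum w^{(1)}_j+\sum w^{(2)}_j\le w$; summing the product over all such weight compositions is exactly the definition of $C_{\rho_1\rho_2}$, and crucially this sum does not depend on \emph{which} blocks were chosen, only on the sizes $\delta+1$ versus $\delta$. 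This gives $\binom{\sigma_1}{\rho_1}\binom{\sigma_2}{\rho_2}C_{\rho_1\rho_2}$.

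Part (i) then follows by summing part (ii) over all $\uuu$ with $\wts(\uuu)=(\sigma_1,\sigma_2)$: there are $\binom{m_1}{\sigma_1}\binom{m_2}{\sigma_2}$ such $\uuu$, and each contributes $\binom{\sigma_1}{\rho_1}\binom{\sigma_2}{\rho_2}C_{\rho_1\rho_2}$ dominated words of the right weight, so the number of \emph{pairs} is the product. (One should note that distinct $\uuu$ may dominate the same $\vvv$, but here we are counting pairs, so no inclusion–exclusion is needed.) For part (iii), I would dualize: fix $\vvv$ with $\wtr(\vvv)=(\rho_1,\rho_2)$, so $\supp_\delta(\vvv)$ is a fixed set of $\rho_1$ indices in $I_1$ and $\rho_2$ in $I_2$; a word $\uuu$ with $\wts(\uuu)=(\sigma_1,\sigma_2)$ dominates $\vvv$ iff $\supp(\uuu)\supseteq\supp_\delta(\vvv)$, i.e. the $\sigma_1$-subset $\supp(\uuu)\cap I_1$ must contain the fixed $\rho_1$-set, leaving $\binom{m_1-\rho_1}{\sigma_1-\rho_1}$ choices, and likewise $\binom{m_2-\rho_2}{\sigma_2-\rho_2}$ choices in $I_2$. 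Thus the count is $\binom{m_1-\rho_1}{\sigma_1-\rho_1}\binom{m_2-\rho_2}{\sigma_2-\rho_2}$.

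The only real subtlety — the step I would be most careful about — is the claim in part (ii) that the contribution of the active blocks factors cleanly into $C_{\rho_1\rho_2}$ independently of which blocks are chosen; this relies on the equitability assumption, which forces every block in $I_1$ to have the same size $\delta+1$ and every block in $I_2$ size $\delta$, so the per-block factor depends only on the $I_1$/$I_2$ label. Everything else is a routine matter of recognizing that domination is the containment relation $\supp(\uuu)\supseteq\supp_\delta(\vvv)$ and applying elementary subset-counting. I would also remark that the statement of (iii) in the lemma as written has a typographical slip ($\wtr(\vvv)=(\sigma_1,\sigma_2)$ and $\binom{m_2-\sigma_2}{\sigma_2-\rho_2}$ should read $\wtr(\vvv)=(\rho_1,\rho_2)$ and $\binom{m_2-\rho_2}{\sigma_2-\rho_2}$), which the proof makes clear.
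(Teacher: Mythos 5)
Your proposal is correct and follows essentially the same argument as the paper: identify domination with the containment $\supp(\uuu)\supseteq\supp_\delta(\vvv)$, choose the active blocks inside the support ($\binom{\sigma_1}{\rho_1}\binom{\sigma_2}{\rho_2}$ ways), fill them via $C_{\rho_1\rho_2}$, and count supersets for part (iii); the paper merely presents (i) first and derives (ii) from it, whereas you do the reverse, which is an immaterial reordering. Your observation about the typographical slips in the statement of (iii) (and the stray $s_1,s_2$ in the paper's own computation) is also accurate.
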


\begin{proof}
For $(\uuu,\vvv)\in E$, we set $K_i$ to be the support of $\uuu|_{I_i}$ and 
$L_i=\{j\in I_i: \vvv|_{J_j}\ne \vzero\}$ for $i\in [2]$.
Note that if $\wts(\uuu)=(\sigma_1,\sigma_2)$ and $\wtr(\vvv)=(\rho_1,\rho_2)$,
then $|K_i|=\sigma_i$ and $|L_i|=\rho_i$ for $i\in[2]$.
\begin{enumerate}[(i)]
\item There are $\binom{m_1}{\sigma_1}\binom{m_2}{\sigma_2}$ to choose words $\uuu$ with
$\wts(\uuu)=(\sigma_1,\sigma_2)$. 
Since $\uuu$ dominates $\vvv$, we have $\binom{\sigma_1}{\rho_1}\binom{\sigma_2}{\rho_2}$ choices  
for $L_1$ and $L_2$. For fixed $L_1$ and $L_2$, there are $C_{\rho_1\rho_2}$ choices for $\vvv$.
Therefore, the total number of pairs is $\binom{s_1}{\sigma_1}\binom{s_2}{\sigma_2}\binom{\sigma_1}{\rho_1}\binom{\sigma_2}{\rho_2}C_{\rho_1\rho_2}$.
\item When $K_1$ and $K_2$ are fixed, the previous argument demonstrates that there are 
$\binom{\sigma_1}{\rho_1}\binom{\sigma_2}{\rho_2}C_{\rho_1\rho_2}$ choices for $\vvv$.
\item When $L_1$ and $L_2$ are fixed, there are $\binom{m_1-\rho_1}{\sigma_1-\rho_1}$ and $\binom{m_2-\rho_2}{\sigma_2-\rho_2}$ choices for $K_1$ and $K_2$, respectively. 
Therefore, the desired number is $\binom{s_1-\rho_1}{\sigma_1-\rho_1}\binom{s_2-\sigma_2}{\sigma_2-\rho_2}$.
\end{enumerate}
\end{proof}
 
Finally, we state the reduced linear program.

\begin{equation}\label{comp:reduced-lp}
 \max \sum_{(\sigma_1,\sigma_2,\rho_1,\rho_2)\in\Omega}  \binom{m_1}{\sigma_1}\binom{m_2}{\sigma_2}\binom{\sigma_1}{\rho_1}\binom{\sigma_2}{\rho_2}C_{\rho_1\rho_2}X_{\sigma_1\sigma_2\rho_1\rho_2}
\end{equation}
subject to the following constraints.
\begin{enumerate}[(I)]
\item ${\cS}$-side constraints:
\[\sum_{\rho_1=0}^{\min\{\sigma_1,w\}}\sum_{\rho_2=0}^{\min\{\sigma_2,w-\rho_1\}} 
\binom{\sigma_1}{\rho_1}\binom{\sigma_2}{\rho_2}C_{\rho_1\rho_2} X_{\sigma_1\sigma_2\rho_1\rho_2}\le 1 \mbox{ for all } 0\le \sigma_1\le m_1,\, 0\le\sigma_2\le m_2.\]
\item ${\cR}$-side constraints:
\[\sum_{\sigma_1=\rho_1}^{m_1}\sum_{\sigma_2=\rho_2}^{m_2} 
\binom{m_1-\rho_1}{\sigma_1-\rho_1}\binom{m_2-\rho_2}{\sigma_2-\rho_2} X_{\sigma_1\sigma_2\rho_1\rho_2}\le 1 \mbox{ for all } 0\le \rho_1\le m_1,\, 0\le \rho_2\le m_2,\, 0\le \rho_1+\rho_2 \le w.\]
\item Variable constraints:
\[ 0\le X_{\sigma_1\sigma_2\rho_1\rho_2}\le 1 \mbox{ for all } (\sigma_1,\sigma_2,\rho_1,\rho_2)\in\Omega. \]
\end{enumerate}

There exists a mapping if and only if the objective value achieves $2^m$. Furthermore, we know that the objective value attains $2^m$ if and only if the $\cS$-side constraints are active, or met with equality.

\begin{example}\label{exa:A2}\hfill
\begin{enumerate}[(i)]
\item (Example \ref{exa:A} continued.) Consider $m=2$, $n=4$, and $w=1$. 
Then $\delta=2$, $m_1=n_1=0$, $m_2=2$, $n_2=4$.

Hence, $E$ is partitioned into the orbits
\[ \cO=\{ O_{(0,0)}=\{e_1\},  O_{(1,0)}=\{e_2,e_5\},  O_{(1,1)}=\{e_3,e_4,e_6,e_7\},  
O_{(2,0)}=\{e_8\}, 
O_{(2,1)}=\{e_9,e_{10},e_{11},e_{12}\} \},\]
with
\[\Omega=\{(0,0),(1,0),(1,1),(2,0),(2,1)\}.\]

Then $A^*$ is given by the $5\times 5$ matrix
\[ A^*=\left(
\begin{array}{c cc cc}
1& 0&0& 0&0\\
0& 1&2& 0&0\\
0& 0&0& 1&4\\
1& 2&0& 1&0\\
0& 0&1& 0&1
\end{array}
\right)\, .
\]

\item Consider $m=3$, $n=4$, and $w=2$. 
Then $\delta=1$, $m_1=1$, $m_2=2$, $n_1=2$ and $n_2=2$.
Here,
{\small
\begin{align*}
\Omega &=\{(0,0,0,0),(0,1,0,1),(0,1,0,1),(0,2,0,0),(0,2,0,1),(0,2,0,2),(1,0,0,0),(1,0,1,0),\\
&\hspace{7mm}(1,1,0,0),(1,1,0,1),(1,1,1,0),(1,1,1,1),(1,2,0,0),(1,2,0,1),(1,2,0,2),(1,2,1,0),(1,2,1,1)\}.
\end{align*}
}

Then $A^*$ is given by the $11\times 17$ matrix
\[ A^*=\left(
\begin{array}{c cc ccc cc cccc ccccc}
1& 0&0& 0&0&0& 0&0& 0&0&0&0& 0&0&0&0&0\\
0& 1&1& 0&0&0& 0&0& 0&0&0&0& 0&0&0&0&0\\
0& 0&0& 1&2&1& 0&0& 0&0&0&0& 0&0&0&0&0\\
0& 0&0& 0&0&0& 1&3& 0&0&0&0& 0&0&0&0&0\\
0& 0&0& 0&0&0& 0&0& 1&1&3&2& 0&0&0&0&0\\
0& 0&0& 0&0&0& 0&0& 0&0&0&0& 1&2&1&3&4\\

1& 2&0& 1&0&0& 1&0& 2&0&0&0& 1&0&0&0&0\\
0& 0&1& 0&1&0& 0&0& 0&1&0&0& 0&1&0&0&0\\
0& 0&0& 0&0&1& 0&0& 0&0&0&0& 0&0&1&0&0\\
0& 0&0& 0&0&0& 0&1& 0&0&2&0& 0&0&0&1&0\\
0& 0&0& 0&0&0& 0&0& 0&0&0&1& 0&0&0&0&1
\end{array}
\right)\, .
\]
\end{enumerate}
\end{example}

We summarize the results in this section with the following theorem.

\begin{theorem}\label{theorem-poly-time}
Let $G=([m]\cup[n], E)$ be an equitable domination graph.
The linear program defined by \eqref{comp:reduced-lp} is equivalent to \eqref{comp:lp} and
has $O(m^2w^2)$ variables and $O(m^2+w^2)$ constraints.
Therefore, we can determine the existence of an $G$-domination mapping in time polynomial in $m$ and $w$.
\end{theorem}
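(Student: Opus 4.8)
The plan is to verify the three claims of Theorem~\ref{theorem-poly-time} in order: (a) that \eqref{comp:reduced-lp} is equivalent to \eqref{comp:lp}, (b) that the reduced program has the stated dimensions, and (c) that this yields a polynomial-time decision procedure. Claim (a) is essentially already assembled from the preceding development, so the main work is to connect the pieces cleanly. First I would invoke Theorem~\ref{thm:sub_dominate} to reduce the existence question to the existence of a matching of size $2^m$ in the compatibility graph $H$, and then the K\"onig--Egev\'ary theorem to identify $M(H)$ with the value of the fractional relaxation \eqref{comp:lp}. Next I would apply Proposition~\ref{prop:reduction} with the group $\bbG = (\bbS_{I_1}\times\bbS_{I_2})\times\bbB$, which the two preceding lemmas have shown acts on $E$ by $\bA$-preserving permutations; by Lemma~\ref{lem:wt-orbit} the orbits are indexed by the set $\Omega$, so the $\cO$-regular reformulation has one variable $X_{\sigma_1\sigma_2\rho_1\rho_2}$ per quadruple in $\Omega$. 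Finally, using the orbit-size computations of Lemma~\ref{lem:orbitsize}, I would identify the objective coefficients and the two families of constraints with exactly those written in \eqref{comp:reduced-lp}(I)--(III), so that the reduced program is the image of \eqref{comp:lp} under the symmetry reduction and hence has the same optimal value; the existence of a $G$-domination mapping is then equivalent to this value equalling $2^m$, equivalently to the $\cS$-side constraints being tight.

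For claim (b) I would bound $|\Omega|$: the constraints $0\le\sigma_i\le m_i$ and $0\le\rho_1+\rho_2\le w$ give at most $(m_1+1)(m_2+1)(w+1)^2 = O(m^2w^2)$ quadruples, hence $O(m^2w^2)$ variables. The $\cS$-side constraints are indexed by pairs $(\sigma_1,\sigma_2)$ with $0\le\sigma_i\le m_i$, giving $O(m^2)$ of them; the $\cR$-side constraints are indexed by pairs $(\rho_1,\rho_2)$ with $\rho_1+\rho_2\le w$, giving $O(w^2)$ of them; together with the box constraints this is $O(m^2+w^2)$ constraints (after absorbing the variable bounds). I would also note that the coefficients $C_{\rho_1\rho_2}$ and the binomials appearing in \eqref{comp:reduced-lp} are each computable in time polynomial in $m$ and $w$ — $C_{\rho_1\rho_2}$ being a bounded-depth sum of at most $w^{\rho_1+\rho_2}\le w^w$ \dots\ no, better: it is a sum over compositions of an integer $\le w$ into $\rho_1+\rho_2\le w$ positive parts, of which there are at most $2^w$; to keep everything polynomial I would instead observe that $C_{\rho_1\rho_2}$ satisfies a simple recurrence (convolving the generating polynomials $\sum_{k\ge1}\binom{\delta+1}{k}z^k$ and $\sum_{k\ge1}\binom{\delta}{k}z^k$ and reading off coefficients up to degree $w$), so all coefficients are obtainable in $\mathrm{poly}(m,w)$ arithmetic operations on $\mathrm{poly}(m,w)$-bit integers.

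Claim (c) then follows because a linear program with a polynomial number of variables and constraints, all of polynomial bit-size, is solvable in polynomial time (e.g.\ by the ellipsoid method or interior-point methods); we solve \eqref{comp:reduced-lp}, compare its optimum to $2^m$, and output ``exists'' iff equality holds. I would close by reiterating the logical chain: optimum $=2^m$ $\iff$ $M^*(H)=2^m$ $\iff$ (K\"onig--Egev\'ary) $M(H)=2^m$ $\iff$ (Theorem~\ref{thm:sub_dominate}) a $G$-domination mapping exists.

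The step I expect to be the main obstacle is the bookkeeping in claim (a): matching the objective coefficients and both constraint families of \eqref{comp:reduced-lp} against the orbit sizes and incidence counts from Lemma~\ref{lem:orbitsize} requires care about which orbit a column belongs to and how many times a given row (a vertex $\xxx\in\cS$ or $\yyy\in\cR$) meets that orbit. Concretely, the $\cS$-side coefficient of $X_{\sigma_1\sigma_2\rho_1\rho_2}$ must be the number of $\vvv$ with $\wtr(\vvv)=(\rho_1,\rho_2)$ dominated by a \emph{fixed} $\uuu$ of weight $(\sigma_1,\sigma_2)$ — this is Lemma~\ref{lem:orbitsize}(ii) — while the $\cR$-side coefficient must be Lemma~\ref{lem:orbitsize}(iii), and one has to check these are constant across the rows of each fixed weight class (which is exactly the point of the symmetry, but it deserves an explicit sentence). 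I would also need to double-check the reduction formula $A^*(i,k)=\sum_{j\in O_k}A(i,j)$ from Proposition~\ref{prop:reduction} is being applied with the rows themselves grouped into their own orbits under the induced action of $\bbG$ on $\cS\cup\cR$, so that each group of identical rows collapses to a single constraint; this is why the constraint count drops to $O(m^2+w^2)$ rather than merely $O(2^m)$.
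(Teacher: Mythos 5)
Your proposal is correct and follows essentially the same route as the paper: Theorem~\ref{thm:sub_dominate} plus the K\"onig--Egev\'ary theorem reduce existence to the value of \eqref{comp:lp}, Proposition~\ref{prop:reduction} with the group $\bbG$ and Lemmas~\ref{lem:wt-orbit} and~\ref{lem:orbitsize} yield exactly the reduced program \eqref{comp:reduced-lp} with $|\Omega|=O(m^2w^2)$ variables and $O(m^2+w^2)$ collapsed constraints, and polynomial-time LP solving finishes the argument (the paper states this theorem as a summary of the section without a separate written proof). Your added remark on computing the coefficients $C_{\rho_1\rho_2}$ by convolving truncated generating polynomials is a harmless refinement the paper leaves implicit.
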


\section{Existence Proof}
\label{sec:proof}

%

In this section, we prove Theorem~\ref{theorem-main}. 
In particular, we show that for sufficiently large $m$,
an $(m,n,w)$-domination mapping exists whenever $|\cB(n,w)|\ge 2^m$.
From Theorem~\ref{thm:sub_dominate}, this is equivalent to finding a perfect matching in 
the associated compatibility graph.
To this end, we invoke the celebrated Hall's marriage theorem~\cite{Hall35}.

\begin{theorem}
\label{thm:Hall}
Consider a bipartite graph with $V_1$ and $V_2$ as its left and right vertices, 
and $|V_1| \leq |V_2|$.
There is a matching of size $|V_1|$ (i.e. a perfect matching) if and only if for each subset $X$ of $V_1$, the number
of vertices in the neighborhood of $X$ has at least $|X|$ vertices.
\end{theorem}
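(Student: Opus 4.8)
The plan is to dispatch the easy necessity direction first and then devote the bulk of the argument to sufficiency, which carries the substantive content. For necessity, suppose a matching $M$ of size $|V_1|$ exists. Every $X \subseteq V_1$ is then saturated by $M$, and the $|X|$ matching partners are distinct vertices of $V_2$ that all lie in the neighborhood $N(X)$; hence $|N(X)| \ge |X|$. This direction requires no further work.

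For sufficiency I would argue by contradiction, using the augmenting-path characterization of maximum matchings. Let $M$ be a matching of maximum size and suppose, for contradiction, that $M$ does not saturate $V_1$; fix an unsaturated vertex $u \in V_1$. I would then consider all vertices reachable from $u$ along $M$-alternating paths --- paths that start with a non-matching edge at $u$ and thereafter alternate between matching and non-matching edges --- and split them according to parity into $A \subseteq V_1$ (reached by even-length paths, including $u$ itself via the empty path) and $B \subseteq V_2$ (reached by odd-length paths). The goal is to exhibit $X = A$ as a violation of Hall's condition.

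The key steps, in order, are: (i) every vertex of $B$ is saturated by $M$, since otherwise an alternating path from the unsaturated $u$ to an unsaturated vertex of $B$ would be an augmenting path, contradicting the maximality of $M$; (ii) $M$ induces a bijection between $B$ and $A \setminus \{u\}$, because following the matching edge from any $b \in B$ reaches a vertex of $A \setminus \{u\}$ (append the matching edge to the alternating path ending at $b$, yielding an even-length path), while conversely every $a \in A \setminus \{u\}$ is reached by a path of length at least two whose final matching edge exhibits its $B$-partner; this gives $|B| = |A| - 1$; and (iii) the neighborhood identity $N(A) = B$. Combining these yields $|N(A)| = |B| = |A| - 1 < |A|$, contradicting $|N(A)| \ge |A|$. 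Hence no maximum matching can leave a vertex of $V_1$ unsaturated, so $M$ has size $|V_1|$.

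I expect step (iii) --- proving $N(A) = B$ --- to be the main obstacle, as it demands careful bookkeeping of edge parity along alternating paths together with special handling of the root $u$. The inclusion $B \subseteq N(A)$ is immediate, since each $b \in B$ has an $A$-predecessor on its alternating path. For $N(A) \subseteq B$, I would fix $a \in A$ and a neighbor $b'$, then distinguish whether the edge $a b'$ lies in $M$: a non-matching edge extends the even-length alternating path at $a$ to reach $b'$, placing $b' \in B$; a matching edge forces $b'$ to be the matching partner of $a$, which (since $a \ne u$, as $u$ is unsaturated) is precisely the $B$-predecessor of $a$, again in $B$. An alternative route is the classical induction on $|V_1|$, splitting into the case where every proper nonempty subset has strict slack $|N(X)| \ge |X| + 1$ versus the case of a tight subset with $|N(X_0)| = |X_0|$; I would nonetheless favor the augmenting-path proof as more transparent and self-contained.
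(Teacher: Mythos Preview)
Your proof is correct: the necessity direction is immediate, and for sufficiency the augmenting-path argument via Berge's lemma is a standard and complete route. Your handling of step~(iii), distinguishing whether the edge $ab'$ lies in $M$ and noting that $u$ is unsaturated so the matching-edge case only arises for $a \ne u$, is exactly the right case analysis.

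As for comparison with the paper: there is nothing to compare. The paper does not prove \Tref{thm:Hall} at all; it merely states Hall's marriage theorem as a classical result and cites Hall's original 1935 paper~\cite{Hall35}, then proceeds to \emph{apply} it to the compatibility graph. So your proposal supplies strictly more than the paper does on this point. If anything, one could note that the paper's broader strategy in Section~\ref{sec:proof} is essentially to verify Hall's condition (the non-existence of ``bad sets'' $X$ with $|N(X)| < |X|$), which is precisely the sufficiency direction you have established.
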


In what follows, we study the suffciency condition of Theorem~\ref{thm:Hall} for our compatibility graph.
In particular, a set of vertices $X\subset \cS$  is called a {\em bad set}
if its neighbourhood $N(X)$ in $\cR$ has size less than $|X|$. In other words, $|X|>|N(X)|$.
We examine properties of bad sets and find compatibility graphs that contain no bad sets. 
Therefore, in such compatibility graphs, the condition of Theorem~\ref{thm:Hall} is satisfied and 
hence the corresponding $(m,n,w)$-domination mapping exists.


For our exposition, we define certain quantities and properties for sets of vertices in $\cS$.
First, we define the notion of descendant-closed.

\begin{definition} Let $\uuu,\vvv\in \cS$. Recall that $\vvv\prec \uuu$ if $\supp(\vvv)\subseteq \supp(\uuu)$ and 
call $\vvv$ a {descendant} of $\uuu$. 
A set $X$ is {\bfit descendant-closed} or {\bfit d-closed} if for all $\uuu\in X$, $\vvv\prec \uuu$ implies that  $\vvv\in X$.
\end{definition}

Given any bad set, we are always able to construct a (possibly different) d-closed bad set with the same cardinality. 

\begin{lemma}[Closure Lemma] 
If $X$ is a bad set, then there exists a d-closed bad set $X'$ for which $|X|=|X'|$.
\end{lemma}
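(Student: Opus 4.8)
The plan is to show that any bad set can be ``pushed down'' toward the set of low-weight words without increasing its size or decreasing its badness, via a compression (down-shifting) argument. First I would set up a compression operator: for a set $X\subseteq\cS$ and an index $i\in[m]$, define $D_i(X)$ by replacing each $\uuu\in X$ with $\supp(\uuu)\ni i$ by the word $\uuu-\eee_i$ whenever that word is not already in $X$, and leaving $\uuu$ unchanged otherwise. The first claim is the easy one: $|D_i(X)|=|X|$, since $D_i$ acts injectively on $X$ (this is the standard fact about coordinate-wise down-compression). The second, more important claim is that $D_i$ does not enlarge the neighbourhood in $\cR$, i.e. $|N(D_i(X))|\le|N(X)|$; granting this, $D_i(X)$ remains a bad set, and iterating $D_i$ over all $i\in[m]$ until nothing changes yields a set $X'$ that is fixed by every $D_i$, which is precisely the condition of being d-closed. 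Monotonicity of this stabilization (the quantity $\sum_{\uuu\in X}\wt(\uuu)$ strictly decreases at every nontrivial step) guarantees termination.

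The crux is therefore the inequality $|N(D_i(X))|\le|N(X)|$. Here I would exploit the structure of the compatibility graph together with a matching coordinate-compression on the $\cR$-side. Recall that $\uuu$ dominates $\vvv$ iff $\supp(\uuu)\supseteq\supp_\delta(\vvv)$, where $\supp_\delta(\vvv)=\{k\in[m]:\vvv|_{J_k}\ne\vzero\}$ records which of the $m$ coordinate-blocks $J_1,\dots,J_m$ of the domination graph are ``active.'' So the neighbourhood of $X$ depends on $X$ only through the down-set (in the Boolean lattice $2^{[m]}$) generated by $\{\supp(\uuu):\uuu\in X\}$: a word $\vvv\in\cR$ lies in $N(X)$ iff $\supp_\delta(\vvv)$ is contained in some $\supp(\uuu)$ with $\uuu\in X$. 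Define, for the block index $i$, the corresponding down-compression $D_i^{\cR}$ on $\cR$ that zeroes out the block $J_i$ of $\vvv$ (when $J_i$ is active and the resulting word is not already present). The plan is to check that $D_i^{\cR}$ maps $N(X)$ into $N(D_i(X))$: if $\vvv\in N(X)$ via $\uuu\in X$ and we must compress block $i$, then either $\uuu-\eee_i\in D_i(X)$ still dominates the compressed $\vvv$, or $\uuu$ itself stays in $D_i(X)$ and dominates it. Since $D_i^{\cR}$ is injective on $N(X)$, this gives $|N(D_i(X))|\ge|N(D_i^{\cR}(N(X)))|=|N(X)|$ — wait, the direction I actually need is the reverse, so I would instead argue directly that every element of $N(D_i(X))$ is the $D_i^{\cR}$-image of a distinct element of $N(X)$, i.e. exhibit an injection $N(D_i(X))\hookrightarrow N(X)$, yielding $|N(D_i(X))|\le|N(X)|$.

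I expect the bookkeeping in that last injection to be the main obstacle: one must carefully handle the ``collision'' cases — when $\uuu-\eee_i$ is already in $X$, when the compressed $\vvv$ is already a neighbour for a different reason, and when $i\notin\supp(\uuu)$ for the witnessing $\uuu$ — and verify that the assignment $N(D_i(X))\to N(X)$ one writes down is genuinely well-defined and injective in all of them. A clean way to organize this is to phrase everything purely in terms of the down-set $\mathcal{U}(X)\deff\{T\subseteq[m]:T\subseteq\supp(\uuu)\text{ for some }\uuu\in X\}$ of active-block patterns: show $D_i$ on $X$ induces exactly the coordinate-$i$ compression on $\mathcal{U}(X)$, that $|N(X)|=\sum_{T\in\mathcal{U}(X)}r(T)$ where $r(T)$ is the number of $\vvv\in\cR$ with $\supp_\delta(\vvv)=T$ (a quantity depending only on $|T\cap I_1|$, $|T\cap I_2|$ through the $C_{\rho_1\rho_2}$ counts of Lemma~\ref{lem:orbitsize}), and finally invoke the classical fact that set-compression does not increase the size of a down-set, weighted by any $r$ that is itself monotone under compression. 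Then $|N(D_i(X))|\le|N(X)|$ is immediate, and the lemma follows by iterating to a d-closed fixed point.
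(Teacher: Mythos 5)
Your compression route is correct, but it is genuinely different from (and heavier than) the paper's argument, and the step you single out as the crux is actually immediate. The paper performs a single-element exchange: if $X$ is not d-closed, pick $\uuu\in X$ and $\vvv\prec\uuu$ with $\vvv\notin X$, and set $X^{(1)}=X\cup\{\vvv\}\setminus\{\uuu\}$; since $\vvv\prec\uuu$ implies $N(\vvv)\subseteq N(\uuu)$ (any word dominated by $\vvv$ is dominated by $\uuu$, directly from the definition of domination, for an arbitrary domination graph), we get $|N(X^{(1)})|\le|N(X)|<|X|=|X^{(1)}|$, and iterating terminates because the total weight strictly decreases. Your coordinate-wise down-shift $D_i$ accomplishes the same thing in parallel, and your three structural claims are all sound: $D_i$ is injective on $X$, a set fixed by every $D_i$ is d-closed, and the total weight strictly decreases at each nontrivial step. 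But the inequality $|N(D_i(X))|\le|N(X)|$, which you plan to prove via a matching compression $D_i^{\cR}$ on the $\cR$-side with collision bookkeeping and a weighted down-set compression lemma, needs none of that: every element of $D_i(X)$ is either an element of $X$ or of the form $\uuu-\eee_i\prec\uuu$ with $\uuu\in X$, so by the same monotonicity $N(\www)\subseteq N(\uuu)$ for $\www\preceq\uuu$ one has $N(D_i(X))\subseteq N(X)$ outright, and the injection you were looking for is just the inclusion map (note also that your reduction to $\supp_\delta$ and the $C_{\rho_1\rho_2}$ counts presumes the equitable block structure, which the lemma does not require). So your approach buys a canonical, simultaneous compression in the spirit of classical shifting arguments, at the cost of extra machinery; the paper's one-swap-at-a-time argument gets the same conclusion from the single containment $N(\vvv)\subseteq N(\uuu)$ in a few lines.
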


\begin{proof}
Suppose $X$ is not d-closed. Then there exists $\uuu\in X$ and $\vvv\in {\cal S}$ such that $\vvv\prec\uuu$ but $\vvv\notin X$.
Set $X^{(1)}= X\cup \{\vvv\}\setminus \{\uuu\}$. 
Then $|X^{(1)}|=|X|$ and $|N(X^{(1)})|\le |N(X)|<|X|=|X^{(1)}|$.
Hence, $X^{(1)}$ is a bad set with the same size as $X$. 
If $X^{(1)}$ is not d-closed, then we similarly construct $X^{(2)}$ such that $|X^{(2)}|=|X|$ and $X^{(2)}$ is bad.
Since this process is finite, we eventually obtain a d-closed set $X^{(m)}$ such that $|X^{(m)}|=|X|$ and $X^{(m)}$ is bad.
\end{proof}

Next, we define the notion of balanced sets.

\begin{definition} Let $i\in [m]$. We say that $X$ is {\bfit $i$-balanced} if the following holds.
If $\aaa\bbb\in X$ with $|\aaa|=i$, then $\aaa'\bbb\in X$ for all $\aaa'\in \{0,1\}^i$.
\end{definition}

Observe that if $X$ is $i$-balanced, then $X$ is $j$-balanced for all $j\le i$.
As a convention, we say that all sets $X$ are $0$-balanced.
Finally, we look at certain collection of bad sets that are both d-closed and balanced.

\begin{definition} Let $0\le i\le s$. Let $\cX_i$ be the collection of all bad sets
that are d-closed and $i$-balanced.
\end{definition}

Our proof on the non-existence of bad sets follows an induction strategy.

\begin{enumerate}[(I)]
\item Assume that there exists a bad set $X_0$.
\item {\bf Base case}. Using the closure lemma, we may modify $X_0$ so that $X_0'$ is both d-closed and 0-balanced. 
This implies that $\cX_0$ is nonempty.
\item {\bf Induction step}. Let $1\le i\le m$. By induction hypothesis, we have that $\cX_{i-1}$ is nonempty.
Pick $X\in \cX_{i-1}$ with the smallest cardinality. We then construct a set $X'$ such that $X'\in \cX_{i}$.
This implies that $\cX_{i}$ is nonempty.
\item Therefore, $\cX_m$ is nonempty and $X\in \cX_m$ implies that $X=\cS$. 
This contradicts the fact that $|\cS|\le |\cR|=|N(\cS)|$.
\end{enumerate}

In what follows, we focus on the induction step. 
We fix $X\in \cX_{i-1}$ and augment $X$ so that the resulting $X'$ is in $\cX_i$. 
To ensure that $X'$ is bad, we need to compute the increase in the size of the neighbourhood.
Formally, we have the following definition.

\begin{definition} Let $U, V\subseteq \cS$. Set $\Xi_U(V)\deff |N(U\cup V)|-|N(U)|$.
In other words, $\Xi_U(V)$ is the number of additional neighbours that $V$ adds when adjoined to $U$.
\end{definition}

When $X$ is the smallest set in $\cX_{i-1}$, we have the following property of $\Xi_{X\setminus Y}(Y)$ for certain subset $Y\subseteq X$.

\begin{lemma}[Removal Lemma]
Let $X$ be the smallest set in $\cX_i$.
Let $Y\subseteq X$ such that $X\setminus Y$ is both d-closed and $i$-balanced.
Then $|Y|>\Xi_{X\setminus Y}(Y)$.
\end{lemma}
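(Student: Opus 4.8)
The plan is to prove the Removal Lemma by contradiction, exploiting the minimality of $X$ within $\cX_i$. Suppose $Y \subseteq X$ is such that $X \setminus Y$ is both d-closed and $i$-balanced, yet $|Y| \le \Xi_{X\setminus Y}(Y)$. I would like to derive a contradiction by producing a \emph{smaller} set in $\cX_i$ (or in $\cX_{i-1}$, depending on how the induction is set up — here the statement says $X$ is smallest in $\cX_i$, so I aim for a smaller member of $\cX_i$). The natural candidate is $X' \deff X \setminus Y$ itself. By hypothesis, $X'$ is d-closed and $i$-balanced, so the only thing standing between $X'$ and membership in $\cX_i$ is whether $X'$ is \emph{bad}, i.e.\ whether $|X'| > |N(X')|$.

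First I would record the neighbourhood bookkeeping. By the definition of $\Xi$, we have $|N(X)| = |N((X\setminus Y) \cup Y)| = |N(X\setminus Y)| + \Xi_{X\setminus Y}(Y)$, hence
\[
|N(X\setminus Y)| = |N(X)| - \Xi_{X\setminus Y}(Y).
\]
Also $|X \setminus Y| = |X| - |Y|$. Now use that $X$ is bad: $|N(X)| < |X|$, i.e.\ $|N(X)| \le |X| - 1$. Combining,
\[
|N(X\setminus Y)| = |N(X)| - \Xi_{X\setminus Y}(Y) \le |X| - 1 - \Xi_{X\setminus Y}(Y) \le |X| - 1 - |Y| = |X\setminus Y| - 1,
\]
where the middle inequality is exactly the assumed $|Y| \le \Xi_{X\setminus Y}(Y)$. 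So $X\setminus Y$ is bad. Together with the hypotheses that $X\setminus Y$ is d-closed and $i$-balanced, this says $X\setminus Y \in \cX_i$. But $|X\setminus Y| = |X| - |Y| < |X|$ as soon as $Y \ne \emptyset$, contradicting the minimality of $X$ in $\cX_i$. The degenerate case $Y = \emptyset$ makes the claimed inequality $|Y| > \Xi_{X\setminus Y}(Y)$ read $0 > 0$, which is false, so we must handle it: but if $Y = \emptyset$ then $\Xi_{X\setminus Y}(Y) = \Xi_X(\emptyset) = 0$ too, so the asserted conclusion $|Y| > \Xi_{X\setminus Y}(Y)$ would be $0 > 0$, which fails — meaning the lemma as literally stated needs $Y \ne \emptyset$ (or is simply vacuous/irrelevant there); I would state the lemma for nonempty $Y$, or note that the $Y=\emptyset$ case is not used.

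The main obstacle, as far as I can see, is not the inequality chain above — that is essentially a one-line manipulation — but rather making sure the \emph{hypotheses} of the lemma are precisely the ones needed, and that "smallest in $\cX_i$" is the right minimality notion (the surrounding induction talks about picking the smallest $X \in \cX_{i-1}$, so there may be an off-by-one between $\cX_i$ and $\cX_{i-1}$ in the statement that I would reconcile before writing the final proof). The only genuinely substantive point is the monotonicity fact $|N(U)| \le |N(U \cup V)|$, giving $\Xi_{X\setminus Y}(Y) \ge 0$, which is immediate since neighbourhoods only grow under union; this guarantees the step $|N(X)| - \Xi \le |N(X)|$ is consistent but is not actually needed for the contradiction. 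So overall this is a short argument: set $X' = X\setminus Y$, verify it lies in $\cX_i$ using the neighbourhood identity plus badness of $X$ plus the negated conclusion, and contradict minimality.
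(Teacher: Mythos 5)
Your proposal is correct and is essentially the paper's own argument in contrapositive form: the paper directly invokes minimality to conclude $X\setminus Y$ is not bad, hence $|X\setminus Y|\le|N(X\setminus Y)|$, and then computes $|Y|=|X|-|X\setminus Y|>|N(X)|-|N(X\setminus Y)|=\Xi_{X\setminus Y}(Y)$, which uses exactly the same ingredients (badness of $X$, minimality, and the neighbourhood identity) as your contradiction. Your side remarks — that $Y=\emptyset$ must be excluded (the paper's step $|X\setminus Y|<|X|$ also tacitly assumes this) and that the surrounding induction actually applies the lemma with $\cX_{i-1}$ — are accurate but do not change the substance.
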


\begin{proof}
Since $X$ is a bad set, ww have that  $|X|>|N(X)|$.
By the minimality of $X$, we have that $X\setminus Y$ is not a bad set, because $X\setminus Y$ is both d-closed and $i$-balanced and $|X\setminus Y|<|X|$.
Therefore, $|X\setminus Y|\le |N(X\setminus Y)|$. Hence,
$|Y|=|X|-|X\setminus Y|> |N(X)|- |N(X\setminus Y)|=\Xi_{X\setminus Y}(Y)$.
\end{proof}


Next, we describe how we augment $X$. Define
\begin{align*}
Y & =\{\uuu\in X : u_{i}=0, \, \uuu+\eee_{i}\notin X\}, \mbox{ and }\\
Z &=Y+\eee_{i}.
\end{align*}

We have the following lemmas on $X\cup Z$ and $X\setminus Y$.

\begin{lemma}\label{lem:XcupZ}
If $X$ is d-closed and $(i-1)$-balanced, then $X\cup Z$ is both d-closed and $i$-balanced.
\end{lemma}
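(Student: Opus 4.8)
The goal is to show that $X \cup Z$ is both d-closed and $i$-balanced, given that $X$ is d-closed and $(i-1)$-balanced. Recall $Y = \{\uuu \in X : u_i = 0,\ \uuu + \eee_i \notin X\}$ and $Z = Y + \eee_i$, so each element of $Z$ has a $1$ in position $i$ and its ``$i$-flip'' lies in $Y \subseteq X$. I will verify the two properties separately.

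\textbf{$i$-balancedness of $X\cup Z$.} Suppose $\aaa\bbb \in X \cup Z$ with $|\aaa| = i$; I must show $\aaa'\bbb \in X \cup Z$ for every $\aaa' \in \{0,1\}^i$. Write $\aaa = \aaa_0 a_i$ where $\aaa_0 \in \{0,1\}^{i-1}$ and $a_i$ is the $i$-th bit. First I handle $\aaa$ with $a_i = 0$: then $\aaa\bbb \in X$ (no element of $Z$ has $a_i=0$), and since $X$ is $(i-1)$-balanced, $\aaa_0'\,0\,\bbb \in X$ for all $\aaa_0' \in \{0,1\}^{i-1}$; it remains to get the words with $a_i = 1$. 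Here I split on whether $\aaa_0'\,0\,\bbb + \eee_i = \aaa_0'\,1\,\bbb$ lies in $X$: if yes we are done, and if no then $\aaa_0'\,0\,\bbb \in Y$ by definition of $Y$, hence $\aaa_0'\,1\,\bbb \in Z$. Either way $\aaa_0'\,1\,\bbb \in X \cup Z$, completing this case. Next, if the original $\aaa\bbb$ has $a_i = 1$: either $\aaa\bbb \in X$, and then since $X$ is $(i-1)$-balanced the word $\aaa_0\,0\,\bbb \in X$ — reducing to the case just handled — or $\aaa\bbb \in Z$, in which case $\aaa\bbb - \eee_i = \aaa_0\,0\,\bbb \in Y \subseteq X$, again reducing to the handled case. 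So $X \cup Z$ is $i$-balanced.

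\textbf{d-closedness of $X\cup Z$.} Let $\uuu \in X \cup Z$ and $\vvv \prec \uuu$; I must show $\vvv \in X \cup Z$. If $\uuu \in X$, then $\vvv \in X$ by d-closedness of $X$. So suppose $\uuu \in Z$, meaning $u_i = 1$ and $\uuu - \eee_i \in Y \subseteq X$. If $v_i = 0$, then $\vvv \prec \uuu - \eee_i$ (since removing coordinate $i$ from the support of $\uuu$ still contains $\supp(\vvv)$), so $\vvv \in X$ by d-closedness of $X$. If $v_i = 1$, consider $\vvv - \eee_i \prec \uuu - \eee_i$, so $\vvv - \eee_i \in X$ by d-closedness of $X$; now either $\vvv = (\vvv - \eee_i) + \eee_i \in X$, and we are done, or $\vvv - \eee_i \in Y$ and hence $\vvv \in Z$. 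Either way $\vvv \in X \cup Z$.

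\textbf{Main obstacle.} The only subtle point is the $i$-balancedness argument: one must be careful that $(i-1)$-balancedness of $X$ only guarantees freedom in the first $i-1$ coordinates, so the $i$-th bit must be recovered separately via the $Y$/$Z$ dichotomy — and this is precisely what the construction of $Z$ is designed to supply. I expect the write-up to be a routine case analysis once this split (on the value of bit $i$, and on membership of the $i$-flip in $X$) is set up cleanly; no quantitative estimates are needed here, as those enter only in the later lemma bounding $\Xi_{X\setminus Y}(Y)$.
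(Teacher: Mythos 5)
Your proof is correct and follows essentially the same route as the paper: the same case analysis on $v_i$ (via $\vvv-\eee_i\prec\uuu-\eee_i$ and the ``either already in $X$ or its flip lies in $Y$'' dichotomy) for d-closedness, and the same combination of $(i-1)$-balancedness on the first $i-1$ coordinates with the $Y/Z$ dichotomy on bit $i$ for $i$-balancedness. One cosmetic slip: in the case $a_i=1$ with $\aaa\bbb\in X$, the step $\aaa_0 0\,\bbb\in X$ follows from d-closedness of $X$ (it is a descendant obtained by switching off bit $i$), not from $(i-1)$-balancedness, which only lets you alter the first $i-1$ coordinates; with that justification corrected the argument is complete.
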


\begin{proof}
First, we show that $X\cup Z$ is d-closed.
Let $\uuu\in X\cup Z$ and $\vvv\prec \uuu$. 
If $\uuu\in X$, then $\vvv\in X\subseteq X\cup Z$ since $X$ is d-closed.
Otherwise, $\uuu\in Z$ and hence, $\uuu=\uuu'+\eee_i$ with $\uuu'\in X$.
If $v_i=0$, then $\vvv\prec \uuu'$ and so, $\vvv\in X$ since $X$ is d-closed.
Hence, $\vvv\in X\cup Z$.
If $v_i=1$, then $\vvv-\eee_i \prec \uuu'$ and so, $\vvv-\eee_i\in X$.
Hence, $\vvv=(\vvv-\eee_i)+\eee_i\in Z\subseteq X\cup Z$.
Thus, $X\cup Z$ is d-closed.

Next, we show that $X\cup Z$ is $i$-balanced. 
Let $\aaa\bbb\in X\cup Z$ with $|\aaa|=i$ and let $\ccc\in \{0,1\}^i$.
Let $\aaa'$ and $\ccc'$ be the first $i-1$ symbols of $\aaa$ and $\ccc$, respectively.
So, $\aaa=\aaa'a_i$ and $\ccc=\ccc'c_i$.
Since $X\cup Z$ is d-closed, it suffices to consider the case, $c_i=1$.

In other words, we want to show that $\ccc'1\bbb \in X\cup Z$.
We distinguish between two cases:
\begin{enumerate}[(a)]
\item $\aaa'1\bbb\in X$. 
Since $X$ is $(i-1)$-balanced, we have  $\ccc'1\bbb\in X\subseteq X\cup Z$.
\item $\aaa'1\bbb\notin X$. This implies that $\aaa'0\bbb\in X$ by the definition of $Y$ and $Z$. 
Again, since $X$ is $(i-1)$-balanced,
we have $\ccc'0\bbb\in X$ and so, $\ccc'1\bbb=\ccc'0\bbb+\eee_i\in X\cup Z$.
\end{enumerate}
Thus, $X\cup Z$ is $i$-balanced. 
\end{proof}

\begin{lemma}\label{lem:XminusY}
If $X$ is d-closed and $(i-1)$-balanced,  then $X\setminus Y$ is both d-closed and $(i-1)$-balanced.
\end{lemma}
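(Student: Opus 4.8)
The plan is to mirror the structure of the proof of Lemma~\ref{lem:XcupZ}, but now tracking the weaker balance property that survives in $X\setminus Y$. Since $X\setminus Y$ is obtained from $X$ only by deleting words $\uuu\in X$ with $u_i=0$ and $\uuu+\eee_i\notin X$, the deletions leave untouched all words with $u_i=1$, and among words with $u_i=0$ they delete exactly those whose ``$i$-partner'' is absent. So I expect $X\setminus Y$ to remain $(i-1)$-balanced but generally not $i$-balanced.

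First I would verify d-closedness of $X\setminus Y$. Take $\uuu\in X\setminus Y$ and $\vvv\prec\uuu$; since $X$ is d-closed, $\vvv\in X$, so it suffices to show $\vvv\notin Y$. Suppose for contradiction $\vvv\in Y$, i.e.\ $v_i=0$ and $\vvv+\eee_i\notin X$. If $u_i=1$, then $\vvv+\eee_i\prec\uuu\in X$, contradicting d-closedness of $X$; if $u_i=0$, then since $\uuu\notin Y$ and $u_i=0$ we must have $\uuu+\eee_i\in X$, and then $\vvv+\eee_i\prec\uuu+\eee_i\in X$ forces $\vvv+\eee_i\in X$ by d-closedness, again a contradiction. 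Hence $\vvv\in X\setminus Y$, so $X\setminus Y$ is d-closed.

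Next I would check $(i-1)$-balance. Write a word of $X\setminus Y$ as $\aaa\bbb$ with $|\aaa|=i-1$, let $\aaa'\in\{0,1\}^{i-1}$ be arbitrary, and show $\aaa'\bbb\in X\setminus Y$. First, $\aaa'\bbb\in X$ since $X$ is $(i-1)$-balanced. Then I claim $\aaa'\bbb\notin Y$: membership in $Y$ depends only on coordinate $i$ and the $i$-th partner, and coordinate $i$ lies inside the suffix $\bbb$, which is unchanged when passing from $\aaa\bbb$ to $\aaa'\bbb$. More precisely, if $\aaa'\bbb\in Y$ then the $i$-th bit of $\bbb$ is $0$ and flipping it yields a word not in $X$; but applying $(i-1)$-balance of $X$ in reverse shows $\aaa\bbb$ would then also be in $Y$ (its $i$-th bit is the same, and if its flip were in $X$, then by $(i-1)$-balance so would be the flip of $\aaa'\bbb$), contradicting $\aaa\bbb\in X\setminus Y$. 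Hence $\aaa'\bbb\in X\setminus Y$, so $X\setminus Y$ is $(i-1)$-balanced.

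The main obstacle I anticipate is bookkeeping the interaction between the coordinate index $i$ used for balance and the coordinate $i$ used in the definition of $Y$ and $Z$: one must be careful that ``$(i-1)$-balanced'' refers to the first $i-1$ coordinates while coordinate $i$ is precisely the pivot coordinate, so that flipping coordinate $i$ commutes with permuting the first $i-1$ coordinates. Once that is set up cleanly, both verifications are short case analyses essentially identical in spirit to those in Lemma~\ref{lem:XcupZ}.
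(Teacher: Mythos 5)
Your proposal is correct and takes essentially the same route as the paper: d-closedness via the case split on whether $u_i=1$ or $\uuu+\eee_i\in X$ (you phrase it by contradiction, the paper unifies the cases by passing to $\uuu'$), and $(i-1)$-balance via the observation that membership in $Y$ is determined by the suffix starting at coordinate $i$, so it commutes with replacing the length-$(i-1)$ prefix. The only blemish is a wording slip --- ``the $i$-th bit of $\bbb$'' should be the $i$-th bit of the whole word $\aaa'\bbb$, i.e.\ the first bit of $\bbb$ (the paper's $b_1$) --- which does not affect the argument.
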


\begin{proof}First, we show that $X\setminus Y$ is d-closed.
Let $\uuu\in X\setminus Y$ and $\vvv\prec \uuu$.
Since $\uuu\notin Y$, either $u_i=1$ or $\uuu+\eee_i\in X$.
Hence, set $\uuu'=\uuu$ if $u_i=1$ and $\uuu'=\uuu+\eee_i$ otherwise.
If $v_i=0$, then $\vvv+\eee_i \prec \uuu'\in X$, and so, $\vvv+\eee_i\in X$. 
Hence, $\vvv\notin Y$. If $v_i=1$, then $\vvv\notin Y$ by definition.
In both cases, $\vvv\notin Y$. Since $X$ is d-closed, $\vvv\in X$ and therefore, $\vvv\in X\setminus Y$.
Thus, $X\setminus Y$ is d-closed.

Next, we show that $X\setminus Y$ is $(i-1)$-balanced.
Let $\aaa\bbb\in X\setminus Y$ with $|\aaa|=i-1$ and let $\ccc\in \{0,1\}^{i-1}$.
Since $X$ is $(i-1)$-balanced, we have that $\ccc\bbb\in X$.
When $b_1=0$, we have by definition of $Y$ that $\aaa\bbb+\eee_i\in X$ since $\aaa\bbb\notin Y$.
Since $X$ is $(i-1)$-balanced and  $\aaa\bbb+\eee_i\in X$, 
we have that $\ccc\bbb+\eee_i \in X$ and so, $\ccc\bbb\notin Y$.
When $b_1=1$, we have that $\ccc\bbb\notin Y$ by definition.
Therefore, since $\ccc\bbb\in X$, it follows that $\ccc\bbb\in X\setminus Y$.
Thus, $X\setminus Y$ is $(i-1)$-balanced.
\end{proof}

Finally, we have the following lemma that reduces the induction step to 
demonstrating inequality \eqref{eq:target1}.

\begin{lemma} \label{lem:target1} If $X$ is an d-closed, $(i-1)$-balanced and bad set, and 
\begin{equation}\label{eq:target1}
\Xi_X(Z)\le \Xi_{X\setminus Y}(Y),
\end{equation}
then $X\cup Z$ is a d-closed, $i$-balanced and bad set. In other words, $X\cup Z\in \cX_i$.
\end{lemma}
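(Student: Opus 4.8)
The plan is to lean entirely on the three lemmas just proved. By Lemma~\ref{lem:XcupZ} we already know that $X\cup Z$ is d-closed and $i$-balanced, so the only thing left to check is that $X\cup Z$ is a bad set, i.e.\ that $|N(X\cup Z)|<|X\cup Z|$. If $Y=\emptyset$ then $Z=\emptyset$ and $X\cup Z=X$, which is bad by hypothesis, so there is nothing further to prove; hence I would assume $Y\neq\emptyset$ from now on.

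First I would record two elementary identities. Every element of $Z=Y+\eee_i$ has the form $\uuu+\eee_i$ with $\uuu\in Y\subseteq X$ and $\uuu+\eee_i\notin X$ (this is precisely the defining condition of $Y$), so $Z\cap X=\emptyset$; and $\uuu\mapsto\uuu+\eee_i$ is injective, so $|Z|=|Y|$. Consequently $|X\cup Z|=|X|+|Y|$. On the other hand, the very definition of $\Xi_X$ gives $|N(X\cup Z)|=|N(X)|+\Xi_X(Z)$. Next I would bound each summand. Since $X$ is bad, $|N(X)|\le |X|-1$. For the other summand, recall that in the induction step $X$ is the \emph{smallest} set in $\cX_{i-1}$; by Lemma~\ref{lem:XminusY} the set $X\setminus Y$ is d-closed and $(i-1)$-balanced with $|X\setminus Y|<|X|$, so the Removal Lemma, applied one level down at index $i-1$, gives $\Xi_{X\setminus Y}(Y)\le |Y|-1$. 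Combining this with the hypothesis~\eqref{eq:target1} yields $\Xi_X(Z)\le |Y|-1$. Adding the two bounds,
\[
|N(X\cup Z)| \;=\; |N(X)|+\Xi_X(Z)\;\le\;(|X|-1)+(|Y|-1)\;=\;|X\cup Z|-2\;<\;|X\cup Z|,
\]
so $X\cup Z$ is bad, and together with Lemma~\ref{lem:XcupZ} this shows $X\cup Z\in\cX_i$.

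Almost all of the real content sits in the earlier lemmas, so this step is short; the only places that need care are the bookkeeping of the balance index --- the Removal Lemma is phrased for an $i$-balanced set whereas Lemma~\ref{lem:XminusY} only delivers an $(i-1)$-balanced one, so it has to be invoked at the previous index --- and the tacit use of the minimality of $X$ in $\cX_{i-1}$, which is exactly what licenses the bound $\Xi_X(Z)\le|Y|-1$ via the Removal Lemma. The genuine obstacle in the whole induction is of course establishing inequality~\eqref{eq:target1} itself, but here that is taken as a hypothesis.
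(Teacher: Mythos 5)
Your proof is correct and follows essentially the same route as the paper's: reduce to badness via Lemma~\ref{lem:XcupZ}, use Lemma~\ref{lem:XminusY} together with the Removal Lemma (at index $i-1$, with the tacit minimality of $X$ in $\cX_{i-1}$) to bound $\Xi_{X\setminus Y}(Y)$, and combine with $|Z|=|Y|$, $Z\cap X=\emptyset$ and the hypothesis~\eqref{eq:target1}. The only differences are cosmetic bookkeeping (the separate $Y=\emptyset$ case and the explicit integer slack), which if anything make the argument slightly more careful than the paper's.
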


\begin{proof}
By Lemma \ref{lem:XcupZ}, it remains to show that $X\cup Z$ is a bad set.

Since $X\setminus Y$ is both d-closed and $(i-1)$-balanced by Lemma \ref{lem:XminusY},
we apply the removal lemma to have that $|Y|>\Xi_{X\setminus Y}(Y)$.
Futhermore, we have that $|X|+|Z|=|X\cup Z|$ and $|Z|=|Y|$.
Combining these inequalities and equalities, we have that 
\begin{align*}
|X\cup Z| & = |X|+|Z|\\
&=|X|+|Y|\\
&>|N(X)|+ \Xi_{X\setminus Y}(Y)\\
&\ge |N(X)|+ \Xi_{X}(Z)=|N(X\cup Z)|. 
\end{align*}
Thus, $X\cup Z$ is a bad set.
\end{proof}

In view of Lemma \ref{lem:target1}, our goal is to show that \eqref{eq:target1} holds. To this end, let us first introduce more notation.

\begin{definition} For any vector $\vvv\in S$, we let $\Psi(\vvv)$ denote the {\bfit additional neighbourhood} of $\vvv$, i.e.  
\[\Psi(\vvv)\deff \big|\{\uuu\in N(\vvv): \uuu \notin N(\vvv') \mbox{ for all proper descendant $\vvv'\prec \vvv$}\}\big|.\]
For a set $V\subseteq S$, define $\Psi(V)=\sum_{v\in V}\Psi(v)$.
\end{definition}

The definition of $\Psi$ is useful for computing the size of certain neighbourhoods. 

\begin{lemma}\label{lem:Psi-closed}
For any d-closed set, we have that $|N(V)|=\Psi(V)$.
\end{lemma}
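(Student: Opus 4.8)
The statement to prove is Lemma~\ref{lem:Psi-closed}: for any d-closed set $V\subseteq\cS$, we have $|N(V)|=\Psi(V)$. The idea is that $\Psi$ was defined precisely to count each neighbour in $\cR$ exactly once, by attributing it to the ``smallest'' word in $V$ that reaches it. First I would recall the two definitions side by side: $N(V)=\bigcup_{\vvv\in V}N(\vvv)$, while $\Psi(V)=\sum_{\vvv\in V}\Psi(\vvv)$ where $\Psi(\vvv)$ counts those $\uuu\in N(\vvv)$ that lie in $N(\vvv')$ for no \emph{proper} descendant $\vvv'\prec\vvv$. So the natural strategy is to exhibit, for each $\uuu\in N(V)$, a \emph{unique} $\vvv\in V$ that is ``charged'' for $\uuu$ in the sum $\Psi(V)$, and conversely to check that every term counted in $\Psi(V)$ corresponds to some $\uuu\in N(V)$.

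\textbf{Key steps.} The core observation is that for a fixed $\uuu\in\cR$, the set $D_\uuu\deff\{\vvv\in\cS:\uuu\in N(\vvv)\}$ is \emph{ancestor-closed} (upward-closed) under $\prec$: if $\vvv$ dominates $\uuu$ and $\vvv\prec\vvv''$, then $\vvv''$ also dominates $\uuu$, since domination only requires that $x_i=0$ force $y_j=0$ along edges $(i,j)\in E$, and enlarging the support of the source word only removes constraints. Hence $D_\uuu$, restricted to $V$, has a well-defined structure: I claim $V\cap D_\uuu$ has a \emph{unique minimal element} whenever it is nonempty. To see this, note $V\cap D_\uuu$ is closed downward-into-$D_\uuu$ in the sense that if $\vvv\in V\cap D_\uuu$ and $\vvv'\prec\vvv$ with $\vvv'\in D_\uuu$, then $\vvv'\in V$ (because $V$ is d-closed) hence $\vvv'\in V\cap D_\uuu$. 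So $V\cap D_\uuu$ is exactly $\{\vvv\in V: \supp(\vvv)\supseteq\supp(\vvv_0)\}$-like... actually the cleanest route: take any minimal $\vvv_0$ in $V\cap D_\uuu$; then $\vvv_0$ is charged for $\uuu$ in $\Psi(\vvv_0)$ by definition (no proper descendant of $\vvv_0$ dominates $\uuu$ and also lies in... wait, $\Psi$ doesn't require the descendant to be in $V$). Here I must be careful: $\Psi(\vvv)$ asks whether $\uuu\in N(\vvv')$ for some proper descendant $\vvv'\prec\vvv$, \emph{regardless of membership in $V$}. So the correct minimal element to charge is: the minimal element $\vvv_0$ of $D_\uuu$ \emph{in the whole poset} $\cS$ — and I must show that if $V\cap D_\uuu\neq\emptyset$ then this global-minimal $\vvv_0$ of $D_\uuu$ actually lies in $V$. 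This is exactly where d-closedness is used: pick any $\vvv\in V\cap D_\uuu$; since $D_\uuu$ is ancestor-closed, $\vvv_0\prec\vvv$ (the minimum is below everything in $D_\uuu$); since $V$ is d-closed and $\vvv\in V$, we get $\vvv_0\in V$. Thus $\uuu$ contributes exactly $1$ to $\Psi(\vvv_0)$ and $0$ to every other $\Psi(\vvv)$, $\vvv\in V$ (for $\vvv\neq\vvv_0$ in $V\cap D_\uuu$, $\vvv$ has the proper descendant $\vvv_0$ in $D_\uuu$, so $\uuu$ is not counted in $\Psi(\vvv)$; for $\vvv\notin D_\uuu$, $\uuu\notin N(\vvv)$ trivially). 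Conversely every $\uuu$ counted in some $\Psi(\vvv)$, $\vvv\in V$, lies in $N(\vvv)\subseteq N(V)$. Hence the charging map $\uuu\mapsto\vvv_0$ is a well-defined surjection $N(V)\to$ (terms of $\Psi(V)$) that is also injective on contributions, giving $|N(V)|=\Psi(V)$.

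\textbf{Main obstacle.} I don't expect a genuine difficulty here — the lemma is essentially bookkeeping — but the one subtle point that must not be glossed over is the fact that $D_\uuu=\{\vvv:\vvv\text{ dominates }\uuu\}$ is upward-closed under $\prec$ and therefore has a unique poset-minimum, \emph{and} that this minimum falls inside $V$ by d-closedness. If one is sloppy and charges $\uuu$ to an arbitrary minimal element of $V\cap D_\uuu$ rather than the global minimum of $D_\uuu$, the count can go wrong because $\Psi$ peeks at descendants outside $V$. So the plan is: (1) prove $D_\uuu$ is ancestor-closed from the definition of domination; (2) deduce $D_\uuu$ has a least element $\min D_\uuu$ (it is $\{\vvv:\supp(\vvv)\supseteq S_\uuu\}$ for the appropriate ``forced support'' $S_\uuu$); (3) show $\uuu\in N(V)\iff\min D_\uuu\in V$, using d-closedness; (4) check $\uuu$ is counted with multiplicity exactly $1$ in $\Psi(V)$, namely in $\Psi(\min D_\uuu)$; (5) conclude by summing over $\uuu\in N(V)$.
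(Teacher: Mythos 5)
Your proof is correct, but it follows a genuinely different route from the paper's. The paper proves the identity by a telescoping argument: it lists $V=\{\vvv_1,\ldots,\vvv_m\}$ in order of nondecreasing weight, writes $|N(V)|=\sum_{j}\Xi_{V_{j-1}}(\{\vvv_j\})$, and notes that each increment equals $\Psi(\vvv_j)$ because every proper descendant of $\vvv_j$ has strictly smaller weight and, by d-closedness, already lies in $V_{j-1}$. You instead fix a neighbour $\uuu\in N(V)$ and charge it to the least element of its dominator set $D_\uuu=\{\vvv\in\cS:\uuu\in N(\vvv)\}$, showing that this up-set has a unique minimum (the indicator vector of the forced support $S_\uuu$), that this minimum lies in $V$ by d-closedness, and that $\uuu$ is counted exactly once in $\Psi(V)$, namely by $\Psi(\min D_\uuu)$. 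The paper's version is shorter and reuses the $\Xi$-machinery it needs again for Lemma~\ref{lem:Psi-addVtoU}; your charging argument is slightly more self-contained and in fact makes explicit the converse direction (that a word counted by $\Psi(\vvv_j)$ cannot already lie in $N(V_{j-1})$), which the paper's one-line justification leaves implicit and which ultimately rests on exactly the least-dominator fact you isolate. One small caveat: upward-closedness of $D_\uuu$ alone does not yield a unique minimal element, so the step ``the minimum is below everything in $D_\uuu$'' must be justified, as you do in your plan, by the forced-support description of the domination relation (i.e.\ $D_\uuu=\{\vvv:\supp(\vvv)\supseteq S_\uuu\}$), not by up-closedness per se; with that justification in place the argument is complete.
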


\begin{proof}
Suppose $V=\{\vvv_1,\vvv_2,\ldots, \vvv_m\}$, where $\wt(\vvv_j)\le \wt(\vvv_{j+1})$ for $j=1,2,\ldots, m-1$.
That is, we have ordered the vectors in $V$ by weight.
Let $V_j\deff \{\vvv_1,\vvv_2,\ldots, \vvv_j\}$ for $j=1,2,\ldots, m$ with the convention that
$V_0=\varnothing$. Then
\[ |N(V)|=\sum_{j=1}^{m}\Xi_{V_{j-1}}(V_{j})=\sum_{j=1}^m\Psi(\vvv_j)=\Psi(V). \]
$\Xi_{V_{j-1}}(V_{j})=\Psi(\vvv_{j})$ follows from the fact that any proper descendant of $\vvv_{j}$
is contained in $V_{j-1}$.
\end{proof}

\begin{lemma}\label{lem:Psi-addVtoU}
Suppose a subset $U\cup V$ of $\cS$ is d-closed.
Then \[\Xi_U(V)=\Psi(V),\]
provided that the sets $U$ and $V$ are disjoint, and 
$U$ does not contain an ancestor of any vector in $V$.
Hence, if $U\cup V$ and $U$ are d-closed and
$U$ and $V$ are disjoint, then $\Xi_U(V)=\Psi(V)$.
\end{lemma}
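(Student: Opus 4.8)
The plan is to prove the first assertion by tracking how the neighbourhood of a $d$-closed set grows, one vector at a time, ordered by weight; the second assertion then follows by one application of the Closure Lemma together with the first assertion. First I would set up the weight ordering: since $U\cup V$ is $d$-closed and finite, enumerate $V=\{\vvv_1,\vvv_2,\ldots,\vvv_t\}$ so that $\wt(\vvv_1)\le\wt(\vvv_2)\le\cdots\le\wt(\vvv_t)$, and set $U_j\deff U\cup\{\vvv_1,\ldots,\vvv_j\}$ with $U_0=U$. Then telescoping gives
\[
\Xi_U(V)=|N(U\cup V)|-|N(U)|=\sum_{j=1}^{t}\bigl(|N(U_j)|-|N(U_{j-1})|\bigr)=\sum_{j=1}^{t}\Xi_{U_{j-1}}(\{\vvv_j\}),
\]
so it suffices to show $\Xi_{U_{j-1}}(\{\vvv_j\})=\Psi(\vvv_j)$ for each $j$, since summing this over $j$ yields $\Xi_U(V)=\sum_j\Psi(\vvv_j)=\Psi(V)$ by definition.

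The key step is therefore the single-vector identity $\Xi_{U_{j-1}}(\{\vvv_j\})=\Psi(\vvv_j)$, and the heart of the matter is to argue that $U_{j-1}$ contains \emph{exactly} the proper descendants of $\vvv_j$, as far as the neighbourhood computation is concerned. By definition $\Xi_{U_{j-1}}(\{\vvv_j\})$ counts those $\uuu\in N(\vvv_j)$ that lie outside $N(U_{j-1})$, while $\Psi(\vvv_j)$ counts those $\uuu\in N(\vvv_j)$ that lie outside $N(\vvv')$ for every proper descendant $\vvv'\prec\vvv_j$. So the claim reduces to showing that, for $\uuu\in N(\vvv_j)$, we have $\uuu\in N(U_{j-1})$ if and only if $\uuu\in N(\vvv')$ for some proper descendant $\vvv'\prec\vvv_j$. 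The ``if'' direction needs every proper descendant of $\vvv_j$ to be in $U_{j-1}$: a proper descendant $\vvv'$ has $\wt(\vvv')<\wt(\vvv_j)$, hence by the weight ordering $\vvv'$ appears before $\vvv_j$ in the enumeration of $V$ if $\vvv'\in V$, and if $\vvv'\notin V$ then $d$-closedness of $U\cup V$ forces $\vvv'\in U\subseteq U_{j-1}$; either way $\vvv'\in U_{j-1}$. The ``only if'' direction is the place where the two extra hypotheses on $U$ enter: if $\uuu\in N(U_{j-1})$ then $\uuu\in N(\www)$ for some $\www\in U_{j-1}=U\cup\{\vvv_1,\ldots,\vvj_{j-1}\}$; if $\www\in\{\vvv_1,\ldots,\vvv_{j-1}\}$ then $\www$ has weight $\le\wt(\vvv_j)$ and one wants it to be a descendant of $\vvv_j$ — I would need to recall from the compatibility-graph structure that $N(\www)\cap N(\vvv_j)\ne\varnothing$ with $\wt(\www)\le\wt(\vvv_j)$ forces $\www\prec\vvv_j$ (the earlier lemma characterizing $\bbG$-orbits / domination should give this), and if $\www\in U$ the assumption that $U$ contains no ancestor of any vector in $V$ rules out $\www\succeq\vvv_j$, while a $\www\in U$ with $N(\www)\cap N(\vvv_j)\ne\varnothing$ again must be a descendant, contradicting... — this case analysis is the main obstacle and must be done carefully, since it is exactly where disjointness and the no-ancestor condition are load-bearing.

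Granting the single-vector identity, the first conclusion $\Xi_U(V)=\Psi(V)$ follows by the telescoping sum above. For the final sentence, suppose only that $U\cup V$ and $U$ are $d$-closed and $U\cap V=\varnothing$; I claim the no-ancestor hypothesis is automatic. Indeed, if some $\www\in U$ were an ancestor of some $\vvv\in V$, i.e.\ $\vvv\prec\www$, then $d$-closedness of $U$ would force $\vvv\in U$, contradicting $U\cap V=\varnothing$. Hence the hypotheses of the first part hold and $\Xi_U(V)=\Psi(V)$, completing the proof. I expect the telescoping and the reduction to be routine; the only delicate point is the descendant characterization in the ``only if'' direction, for which I would cite the orbit/domination structure established earlier in Section~\ref{sec:algorithm} rather than re-deriving it.
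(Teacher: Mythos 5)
Your skeleton is the paper's: order $V$ by weight, telescope $\Xi_U(V)=\sum_j \Xi_{U_{j-1}}(\{\vvv_j\})$, reduce to the single-vector identity $\Xi_{U_{j-1}}(\{\vvv_j\})=\Psi(\vvv_j)$, and your observation that under the hypotheses of the last sentence the no-ancestor condition is automatic (an ancestor in $U$ would force $\vvv\in U$ by d-closedness of $U$, contradicting disjointness) is correct. The ``if'' direction of the single-vector identity is also fine. But the ``only if'' direction --- the step you yourself flag as the main obstacle --- is left unfinished, and the auxiliary claim you propose to lean on is false: it is \emph{not} true that $N(\www)\cap N(\vvv_j)\ne\varnothing$ together with $\wt(\www)\le\wt(\vvv_j)$ forces $\www\prec\vvv_j$. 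For example, $\zero\in\cR$ is a common neighbour of \emph{every} pair of words in $\cS$, comparable or not; the orbit lemma of Section~\ref{sec:algorithm} says nothing about comparability of two dominators of a common word. Moreover, comparability of $\www$ with $\vvv_j$ is not even the statement you need.

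The missing ingredient is that, for a fixed $\uuu\in\cR$, the set of its dominators in $\cS$ is an up-set closed under coordinatewise meet: $\uuu\in N(\xxx)$ if and only if $\supp(\xxx)$ contains every left vertex whose associated positions meet $\supp(\uuu)$. Hence if $\uuu\in N(\vvv_j)\cap N(\www)$ with $\www\in U_{j-1}$, then $\uuu\in N(\vvv_j\wedge\www)$, and $\vvv_j\wedge\www$ is a descendant of $\vvv_j$; it is a \emph{proper} descendant unless $\vvv_j\preceq\www$, i.e.\ unless $U_{j-1}$ contains $\vvv_j$ or a proper ancestor of it --- which is exactly what disjointness, the weight ordering (proper ancestors have strictly larger weight, so cannot precede $\vvv_j$ in the enumeration), and the no-ancestor hypothesis exclude. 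With this, for $\uuu\in N(\vvv_j)$ one gets $\uuu\in N(U_{j-1})$ if and only if $\uuu$ is dominated by a proper descendant of $\vvv_j$, which is the single-vector identity, and the rest of your argument goes through. This is precisely the content of the paper's terse justification that all proper descendants of $\vvv_j$ already lie in $V_{j-1}$ while neither $\vvv_j$ nor any of its ancestors do; as written, your proposal does not supply that step.
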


\begin{proof}
Similar to Lemma \ref{lem:Psi-closed}, we adjoin the vectors in $V$ to $U$ in the order of their weight
and define $V_0=U$ by convention. Then 
\[ \Xi_U(V)=\sum_{j=1}^{m}\Xi_{V_{j-1}}(V_{j})=\sum_{j=1}^m\Psi(\vvv_j)=\Psi(V). \]
as before. To see that $\Xi_{V_{j-1}}(V_{j})=\Psi(\vvv_{j})$, we note the following.
First, when $\vvv_j$ is adjoined to $V_{j-1}$, all of its descendants of $\vvv_j$ are already in $V_{j-1}$
since $U\cup V$ is d-closed.
Moreover, $\vvv_j$ is not in $V_{j-1}$ and none of its ancestors are in $V_{j-1}$.
\end{proof}

Lemma~\ref{lem:Psi-addVtoU} then allows us to compute the $\Xi_{ X\setminus  Y}( Y)$ and $\Xi_ X(Z)$
by simply evaluating $\Psi$.

\begin{corollary}\label{cor:Xi-YZ}
\[\Xi_{ X\setminus  Y}( Y) = \Psi( Y),\mbox{  and }
\Xi_{ X}(Z) = \Psi(Z).
\]
\end{corollary}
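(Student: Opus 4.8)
The plan is to deduce both identities from Lemma~\ref{lem:Psi-addVtoU} in its ``hence'' form: if $U\cup V$ and $U$ are both d-closed and $U$ and $V$ are disjoint, then $\Xi_U(V)=\Psi(V)$. Thus the entire task reduces to checking the three hypotheses --- disjointness, d-closedness of $U$, and d-closedness of $U\cup V$ --- in each of the two instantiations $(U,V)=(X\setminus Y,Y)$ and $(U,V)=(X,Z)$.

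For the first identity I would take $U=X\setminus Y$ and $V=Y$. Then $U\cup V=X$, which is d-closed by the standing hypothesis on $X$; $U=X\setminus Y$ is d-closed by Lemma~\ref{lem:XminusY}; and $U\cap V=\varnothing$ holds trivially since $Y\subseteq X$. Lemma~\ref{lem:Psi-addVtoU} then yields $\Xi_{X\setminus Y}(Y)=\Psi(Y)$.

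For the second identity I would take $U=X$ and $V=Z$. Here $U$ is d-closed by hypothesis; $U\cup V=X\cup Z$ is d-closed by Lemma~\ref{lem:XcupZ}, which applies because $X$ is d-closed and $(i-1)$-balanced; so it remains only to verify $X\cap Z=\varnothing$. This is immediate from the definitions of $Y$ and $Z$: every element of $Z$ has the form $\uuu+\eee_i$ with $\uuu\in Y$, and by the very definition of $Y$ such a vector satisfies $\uuu+\eee_i\notin X$. Hence $Z$ is disjoint from $X$, and Lemma~\ref{lem:Psi-addVtoU} gives $\Xi_X(Z)=\Psi(Z)$.

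I do not foresee a genuine obstacle: the corollary is a bookkeeping consequence of the three immediately preceding lemmas, and the only step requiring an argument beyond citation is the disjointness $X\cap Z=\varnothing$, which falls straight out of how $Y$ was defined. (Should one prefer to invoke the first form of Lemma~\ref{lem:Psi-addVtoU} directly, one would additionally observe that a d-closed set disjoint from $V$ cannot contain an ancestor of any vector of $V$, since d-closedness would then force that vector into the set --- but this is precisely the observation already built into the ``hence'' statement.)
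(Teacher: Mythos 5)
Your proof is correct and follows exactly the route the paper intends: the corollary is the direct application of Lemma~\ref{lem:Psi-addVtoU} (in its ``hence'' form) to the pairs $(X\setminus Y,\,Y)$ and $(X,\,Z)$, with d-closedness supplied by Lemmas~\ref{lem:XminusY} and \ref{lem:XcupZ} and disjointness of $X$ and $Z$ read off from the definition of $Y$. Nothing further is needed.
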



Combining Lemma \ref{lem:target1} and Corollary \ref{cor:Xi-YZ}, to complete the induction step, it is sufficient to show that
\begin{equation}\label{eq:target2}
\Psi(Z)\le \Psi( Y).
\end{equation}

\subsection{Equitable domination graphs}

We complete the proof of Theorem~\ref{theorem-main}. 
To this end, we consider an equitable domination graph.
If we set $\delta=\trunc{n/m}$, then the left vertices in the domination graph have degrees $\delta$ and $\delta+1$.

We have the following characterization of $\Psi(\vvv)$ in terms of its weight.

\begin{lemma}\label{lem-Psi-equitable}
For $\vvv\in\{0,1\}^m$, let the weight of $\vvv$ be $\ell$.
Then for sufficiently large $m$ (which implies sufficiently large $\delta$),
there exist constants $c_1,c_2,\ldots,c_w$ such that
\[
\Psi(\vvv)=
\begin{cases}
c_\ell \delta^w+O(\delta^{w-1}), & \mbox{if $\ell\le w$},\\
0, &\mbox{otherwise.}
\end{cases}
\]
Furthermore, when $\wt(\vvv)=w$, we have that $\Psi(\vvv)=\delta^j(\delta+1)^{w-j}\ge\delta^w$ for some $0\le j\le w$ .
\end{lemma}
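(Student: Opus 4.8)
The plan is to compute $\Psi(\vvv)$ directly from its definition. Recall that in the compatibility graph, $\uuu\in N(\vvv)$ precisely when $\uuu$ dominates $\vvv$, i.e. $\supp(\uuu)\supseteq\supp_\delta(\vvv)$, where $\supp_\delta(\vvv)=\{i\in[m]:\vvv|_{J_i}\ne\zero\}$ and $J_i$ is the block of $\delta$ or $\delta+1$ coordinates of $\cR$ assigned to left-vertex $i$. So $\uuu\in N(\vvv)$ depends on $\vvv$ only through the set $S(\vvv)\deff\supp_\delta(\vvv)\subseteq[m]$, and $N(\vvv)=\{\uuu\in\cS:\supp(\uuu)\supseteq S(\vvv)\}$; in fact $|N(\vvv)|=2^{m-|S(\vvv)|}$. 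The key observation is that $\Psi(\vvv)$ counts those $\uuu\in N(\vvv)$ that lie in no $N(\vvv')$ for a proper descendant $\vvv'\prec\vvv$. First I would note that $\Psi(\vvv)=0$ whenever $\wt(\vvv)>w$: every $\vvv$ with at least $w+1$ ones has a descendant $\vvv'$ of weight exactly $w$ with $S(\vvv')=S(\vvv)$ (since dropping a one from a coordinate block that is already nonzero from another one changes nothing), hence $N(\vvv')=N(\vvv)$ and no neighbour of $\vvv$ is ``new''. Wait — more carefully: if $\wt(\vvv)\ge w+1$, any word $\uuu$ dominating $\vvv$ has $\wt(\uuu)\ge|S(\vvv)|$, but we need $\uuu\in\cB(n,w)$'s preimage side — actually $\uuu\in\cS=\{0,1\}^m$ with no weight restriction, so this needs the descendant argument: pick any $\vvv'\prec\vvv$ of weight $w$ obtained by deleting ones; if we can keep $S(\vvv')=S(\vvv)$ we are done, and we can iff $|S(\vvv)|\le w$, which holds since $|S(\vvv)|\le\wt(\vvv)$... but $\wt(\vvv)\ge w+1$. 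Here one uses that $\Psi(\vvv)$ subtracts the union over \emph{all} proper descendants; when $\ell=\wt(\vvv)>w$ there is in particular a weight-$w$ descendant $\vvv'$ with $S(\vvv')\subseteq S(\vvv)$ and $|S(\vvv')|$ as large as possible, and a short combinatorial argument shows every fresh neighbour of $\vvv$ relative to weight-$(\ell-1)$ descendants would force $|S(\vvv)|>w$, impossible for a word that must still be dominated within the constraints. I would fill this in cleanly via inclusion–exclusion on the blocks.

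Next, for $\ell\le w$, write $\vvv$ with ones in positions $p_1,\ldots,p_\ell$ (at most one per relevant block in the ``generic'' case, more in degenerate cases). The neighbours of $\vvv$ are the $\uuu$ with $\supp(\uuu)\supseteq S(\vvv)$, and a neighbour is counted in $\Psi(\vvv)$ iff it fails to dominate every proper descendant, i.e. iff for \emph{every} way of deleting a one from $\vvv$ the resulting $S$-set is strictly smaller and $\uuu$ is required to contain the original larger set. The clean way: $\Psi(\vvv)$ equals the number of $\uuu\in\cS$ that dominate $\vvv$ but dominate no $\vvv'\prec\vvv$ with $\vvv'\ne\vvv$; since domination depends only on $S(\cdot)$, and $S(\vvv')\subsetneq S(\vvv)$ is forced exactly when the deleted one was the unique one in its block, we get that $\Psi(\vvv)$ is governed by how many of the $\ell$ ones sit in blocks by themselves. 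The number of choices for the actual coordinate positions of those ones within their blocks is a product of block sizes, each either $\delta$ or $\delta+1$; summing over the possible support patterns $S(\vvv)\subseteq[m]$ of size $\le\ell$ and the within-block placements gives a polynomial in $\delta$ whose top-degree term is $\delta^\ell$ times a combinatorial count, but after also accounting for which descendants must be avoided the degree in $\delta$ becomes exactly $w$ — this is where the claimed normalization $c_\ell\delta^w+O(\delta^{w-1})$ comes from. Concretely I expect $\Psi(\vvv)$, for $\wt(\vvv)=\ell$, to be a sum over the $\binom{m-|S(\vvv)|}{\text{anything}}$-type choices times products $\prod(\text{block size})$ over the $w-\ell$ extra ``degrees of freedom'' needed to reach weight-$w$-level support; extracting the leading term in $\delta$ yields the constants $c_\ell$, independent of $m$ once $m$ is large.

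Finally, the $\wt(\vvv)=w$ case is exact and easiest: here every proper descendant $\vvv'$ has $\wt(\vvv')\le w-1<w$, and one checks that the neighbours of $\vvv$ not shared with any $\vvv'$ are exactly those $\uuu$ with $\supp(\uuu)=S(\vvv)$ extended arbitrarily outside — no, exactly $\uuu$ that are ``tight'' in each block, giving $\Psi(\vvv)=\prod_{i\in S(\vvv)}(\text{size of }J_i)=\delta^j(\delta+1)^{w-j}$ where $j$ is the number of size-$\delta$ blocks among those hit. Since each factor is $\ge\delta$ and there are exactly $w$ of them (as $\wt(\vvv)=w$ forces $|S(\vvv)|=w$ in the relevant regime — a one in an already-occupied block would drop the weight needed, handled separately), this is $\ge\delta^w$, proving the ``furthermore'' clause and identifying $c_w=1$. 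The main obstacle I anticipate is the bookkeeping in the $\ell<w$ intermediate case: correctly identifying, via inclusion–exclusion over which blocks are singly versus multiply occupied, exactly which neighbours survive the subtraction of all proper descendants, and verifying that the $\delta$-degree of the surviving count is $w$ (not $\ell$) uniformly — i.e. that the ``missing'' $w-\ell$ factors of $\delta$ reappear through the freedom in choosing the support $S(\vvv)$ and the within-block positions. Everything else is routine binomial-coefficient estimation absorbed into the $O(\delta^{w-1})$ error term.
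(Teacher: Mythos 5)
Your proposal goes off the rails at the very first step, where you identify the neighbourhood: you take $N(\vvv)=\{\uuu\in\cS:\supp(\uuu)\supseteq S(\vvv)\}$ with $|N(\vvv)|=2^{m-|S(\vvv)|}$, i.e.\ you treat $\vvv$ as a word on the $\cR$-side and its neighbours as the length-$m$ words dominating it. But in the lemma $\vvv\in\{0,1\}^m=\cS$, and $N(\vvv)$ is its neighbourhood in the compatibility graph, namely the set of words $\uuu\in\cR=\cB(n,w)$ that $\vvv$ dominates; the descendants $\vvv'\prec\vvv$ are length-$m$ words obtained by deleting ones of $\vvv$. With the correct orientation, $\uuu\in N(\vvv)$ survives into $\Psi(\vvv)$ exactly when every block $J_i$ with $i\in\supp(\vvv)$ of $\uuu$ is nonzero (otherwise the descendant $\vvv-\eee_i$ also dominates $\uuu$), and all other blocks are zero. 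This gives immediately: $\Psi(\vvv)=0$ for $\ell>w$ (each of the $\ell$ blocks needs weight $\ge 1$, but $\wt(\uuu)\le w$), and for $\ell\le w$,
\[
\Psi(\vvv)\;=\;\sum_{\substack{i_1+\cdots+i_\ell\le w\\ i_1,\ldots,i_\ell\ge 1}}\ \prod_{t=1}^{\ell}\binom{|J_{t}|}{i_t},
\]
where each $|J_t|\in\{\delta,\delta+1\}$; the terms with $i_1+\cdots+i_\ell=w$ give $c_\ell\delta^w$, the rest are $O(\delta^{w-1})$, and for $\ell=w$ the only composition is all ones, giving $\delta^j(\delta+1)^{w-j}\ge\delta^w$. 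This is the paper's (four-line) proof.

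Because of the side-swap, your central computation never actually happens. Your count $2^{m-|S(\vvv)|}$ has no $\delta$ in it at all, so it cannot produce $c_\ell\delta^w$; to rescue this you later smuggle in block sizes and ``$\binom{m-|S(\vvv)|}{\cdot}$-type choices,'' but $\vvv$ is a fixed word whose support is fixed --- there is no sum over supports --- and any genuinely $m$-dependent factor would contradict the claim that the $c_\ell$ are constants. The key case $\ell<w$ is only asserted (``this is where the claimed normalization comes from,'' ``I expect''), the $\ell>w$ case is explicitly left as a hole to be ``filled in via inclusion--exclusion,'' and the discussion of ``two ones in the same block'' of $\vvv$ shows the confusion persisting (a length-$m$ word has one coordinate per block, so $|\supp(\vvv)|=\wt(\vvv)$ trivially). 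You do land on the correct exact formula $\prod_{i\in\supp(\vvv)}|J_i|=\delta^j(\delta+1)^{w-j}$ for $\wt(\vvv)=w$, but by a hedged argument rather than the observation that weight $\le w$ forces exactly one $1$ per block. The fix is simply to re-read Definition of the compatibility graph and of $\Psi$, after which the whole lemma is the single displayed sum above.
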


\begin{proof}
It suffices to consider the case where $\ell\le w$.
Let $j$ be the number of vertices in $\supp(\vvv)$ with degree $\delta$. Then the number of vertices in $\supp(\vvv)$ with degree $\delta+1$ is $\ell-j$ and we have that
\begin{align*}
\Psi(\vvv)
& = \sum_{\substack{i_1+i_2+\cdots +i_\ell\le w \\i_1,i_2,\ldots, i_\ell>0}} \binom{\delta}{i_1}\binom{\delta}{i_2}\cdots\binom{\delta}{i_j}\binom{\delta+1}{i_{j+1}}\binom{\delta+1}{i_{j+2}}\binom{\delta+1}{i_{\ell}}\\
& = \sum_{\substack{i_1+i_2+\cdots +i_\ell= w \\i_1,i_2,\ldots, i_\ell>0}}
\left(\lambda_{i_1,i_2,\ldots,i_\ell}\delta^w+O(\delta^{w-1}) \right) + O(\delta^{w-1})
= c_\ell \delta^w+O(\delta^{w-1}),
\end{align*}
where $\displaystyle c_\ell=\sum_{\substack{i_1+i_2+\cdots +i_\ell= w \\i_1,i_2,\ldots, i_\ell>0}} \lambda_{i_1,i_2,\ldots,i_\ell}$.
When $\ell=w$, the only index for the summand is $i_1=i_2=\cdots=i_\ell=1$ and hence, $\Psi(\vvv)=\delta^j(\delta+1)^{w-j}\ge \delta^w$. Thus, the proof is completed.
\end{proof}
%



In addition, we introduce the notion of maximal-support words.

\begin{definition} Given $V\subseteq \cS$, we say that $\vvv\in V$ is {\bfit maximal-support in $V$} 
(or a {\bfit maximal-support word of $V$}) if
$V$ does not contain a proper ancestor of $\vvv$.
\end{definition}

Recall that $X$ is the smallest set in $\cX_{i-1}$. We have the following technical lemmas.

\begin{lemma} Any maximal-support word in $ X$ is of the form $\vone \bbb$, 
where $\vone$ is the all-ones vector of length $i-1$.
\end{lemma}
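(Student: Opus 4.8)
The plan is to exploit only one property of $X$, namely that it is $(i-1)$-balanced (being an element of $\cX_{i-1}$); the minimality of $X$, its d-closedness, and its badness play no role in this particular lemma. Throughout, I write an arbitrary word of length $m$ as $\aaa\bbb$, where $\aaa$ denotes its first $i-1$ coordinates and $\bbb$ its last $m-i+1$ coordinates.

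First I would fix a maximal-support word $\vvv\in X$ and decompose it as $\vvv=\aaa\bbb$ in this way, so that $|\aaa|=i-1$. Since $X$ is $(i-1)$-balanced and $\aaa\bbb\in X$, the defining property of $(i-1)$-balancedness, applied with $\aaa'=\vone$ (the all-ones word of length $i-1$), yields $\vone\bbb\in X$.

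Next I would verify that $\vone\bbb$ is an ancestor of $\vvv$. The words $\vone\bbb$ and $\aaa\bbb$ agree on their last $m-i+1$ coordinates, while on the first $i-1$ coordinates $\supp(\vone)=[i-1]\supseteq\supp(\aaa)$; hence $\supp(\vvv)=\supp(\aaa\bbb)\subseteq\supp(\vone\bbb)$, i.e.\ $\vvv\prec\vone\bbb$. If now $\aaa\ne\vone$, then $\supp(\aaa)\subsetneq[i-1]$, so this containment is strict: $\supp(\vvv)\subsetneq\supp(\vone\bbb)$. That makes $\vone\bbb$ a \emph{proper} ancestor of $\vvv$ that lies in $X$, contradicting the assumption that $\vvv$ is maximal-support in $X$. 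Therefore $\aaa=\vone$ and $\vvv=\vone\bbb$, as claimed.

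The only point needing care is the bookkeeping between the two definitions in play: that membership in $\cX_{i-1}$ supplies precisely the $(i-1)$-balanced property, and that a ``proper ancestor'' means strict containment of supports (so that, for binary words, being distinct is the same as having distinct supports). Beyond matching up these conventions, there is no genuine obstacle — the lemma is essentially a one-line application of balancedness.
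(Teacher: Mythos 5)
Your proof is correct, and it is precisely the intended argument: the paper states this lemma without proof, treating it as an immediate consequence of the fact that $X\in\cX_{i-1}$ is $(i-1)$-balanced. Your application of balancedness with $\aaa'=\vone$ followed by the observation that $\vone\bbb$ would be a proper ancestor of $\aaa\bbb$ in $X$ whenever $\aaa\ne\vone$ is exactly the right (and complete) justification.
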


\begin{lemma}\label{lem:lem21} A maximal-support word in $ X$ has weight at least $w+1$.
\end{lemma}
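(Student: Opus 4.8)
The plan is to combine the previous lemma (which tells us that any maximal-support word of $X$ has the form $\vone\bbb$ with $\vone$ of length $i-1$) with the minimality of $X$ in $\cX_{i-1}$ and the Removal Lemma. First I would argue by contradiction: suppose $\uuu = \vone\bbb$ is maximal-support in $X$ and $\wt(\uuu)\le w$. The key observation is that because $\uuu$ is maximal-support, $X$ contains no proper ancestor of $\uuu$; moreover, since $X$ is d-closed, $X$ contains \emph{every} descendant of $\uuu$. The idea is to remove from $X$ the set $Y_0$ of all words whose support contains $\supp(\uuu)$ — but since $\uuu$ is maximal-support and $X$ is $(i-1)$-balanced and d-closed, this set is simply the ``interval'' of words lying above $\uuu$ inside $X$, and one checks that $X\setminus Y_0$ remains d-closed and $(i-1)$-balanced. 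Then the Removal Lemma gives $|Y_0| > \Xi_{X\setminus Y_0}(Y_0) = \Psi(Y_0)$, where the last equality is Corollary~\ref{cor:Xi-YZ}-style reasoning via Lemma~\ref{lem:Psi-addVtoU}.

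Next I would estimate both sides. On the one hand, $|Y_0|$ is at most the number of words of length $m$ whose support contains the fixed set $\supp(\uuu)$ of size $\ell=\wt(\uuu)$, namely $2^{m-\ell}$ — actually, using $(i-1)$-balancedness and maximal-supportness more carefully, one expects $Y_0$ to be exactly a coset-like family, but a crude upper bound $|Y_0|\le 2^{m-\ell}$ suffices. On the other hand, $\Psi(Y_0) = \sum_{\vvv\in Y_0}\Psi(\vvv)$, and by Lemma~\ref{lem-Psi-equitable} each $\vvv\in Y_0$ with $\wt(\vvv)\le w$ contributes $\Theta(\delta^w)$ while those of weight $>w$ contribute $0$. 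The words in $Y_0$ of weight $\le w$ are exactly those obtained from $\uuu$ by adding at most $w-\ell$ coordinates outside $\supp(\uuu)$; there are $\sum_{k=0}^{w-\ell}\binom{m-\ell}{k} = \Theta(m^{w-\ell})$ of them, each contributing at least $\delta^w$. Hence $\Psi(Y_0) = \Omega(m^{w-\ell}\delta^w)$. Since $\delta = \lfloor n/m\rfloor$ and $2^m\le|\cB(n,w)|\le (n+1)^w$ forces $n\ge 2^{m/w}$, so $\delta$ grows roughly like $2^{m/w}/m$, the quantity $\Psi(Y_0)$ dwarfs $2^{m-\ell}\ge |Y_0|$ for $m$ large, contradicting $|Y_0|>\Psi(Y_0)$.

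The main obstacle I anticipate is making the set $Y_0$ (the family removed from $X$) precise enough that $X\setminus Y_0$ is verifiably still d-closed and $(i-1)$-balanced — this is what licenses the Removal Lemma — while simultaneously keeping $Y_0$ large enough in weight-$\le w$ words to make the $\Psi$ lower bound work. The natural choice is $Y_0 = \{\vvv\in X : \supp(\vvv)\supseteq \supp(\uuu)\}$; one must check that deleting it does not create a d-closure violation (it cannot, since anything it removes has $\uuu$ as a descendant, and descendants of surviving words cannot contain $\supp(\uuu)$) nor an $(i-1)$-balance violation (here one uses that $\uuu = \vone\bbb$, so the first $i-1$ coordinates are already all $1$, and $(i-1)$-balance only permutes those coordinates, leaving $\supp(\uuu)$ invariant on the last $m-i+1$ coordinates). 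Once that bookkeeping is settled, the growth-rate comparison is routine given Lemma~\ref{lem-Psi-equitable} and the bound $\delta = \Omega(2^{m/w}/m)$.
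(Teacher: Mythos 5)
There is a genuine gap, and it sits at the heart of your construction. You propose to remove $Y_0=\{\vvv\in X:\supp(\vvv)\supseteq\supp(\uuu)\}$, the words of $X$ lying \emph{above} $\uuu$. But the very definition of a maximal-support word says that $X$ contains no proper ancestor of $\uuu$, so your $Y_0$ is just $\{\uuu\}$. Consequently your lower bound on $\Psi(Y_0)$ is counting words (ancestors of $\uuu$ of weight at most $w$) that are not in $X$ at all, and the asymptotic comparison $\Psi(Y_0)=\Omega(m^{w-\ell}\delta^w)$ versus $|Y_0|\le 2^{m-\ell}$ collapses. Worse, the Removal Lemma is not applicable to your $Y_0$: for $i\ge 2$, the set $X\setminus\{\uuu\}$ is not $(i-1)$-balanced, since (by the $(i-1)$-balance of $X$) the word $\zero\bbb$ stays in $X\setminus Y_0$ while $\vone\bbb$ has been removed. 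Your balance check misreads the property: $(i-1)$-balance is not invariance under permuting the first $i-1$ coordinates, it demands that whenever one word $\aaa\bbb$ is present, the \emph{entire block} $\{\aaa'\bbb:\aaa'\in\{0,1\}^{i-1}\}$ is present.

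The paper's proof repairs exactly this: it removes the full block $U=\{\aaa\bbb:\aaa\in\{0,1\}^{i-1}\}$ attached to the maximal-support word $\vone\bbb$ (which lies inside $X$ by $(i-1)$-balance). Then $X\setminus U$ is trivially $(i-1)$-balanced, and it is d-closed precisely because $\vone\bbb$ is maximal-support: any ancestor $\aaa'\bbb'\in X$ of a word $\aaa\bbb\in U$ forces $\bbb'=\bbb$, hence lies in $U$. The contradiction is then purely combinatorial and needs no asymptotics: every word of $U$ has weight at most $w$, so $\Psi(\vvv)\ge 1$ for each $\vvv\in U$, giving $\Xi_{X\setminus U}(U)=\Psi(U)\ge|U|$, which contradicts the Removal Lemma's conclusion $|U|>\Xi_{X\setminus U}(U)$. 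Your growth-rate machinery (via $\delta\approx 2^{m/w}/m$) is both unnecessary here and imports numerical hypotheses that belong to the later Lemma on condition \eqref{eq:cond1}, whereas the present lemma holds without them.
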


\begin{proof}It suffices to consider the case $i-1\le w$. 
We prove by contradiction and assume that $\uuu=\vone \bbb$ is a maximal-support word of weight at most $w$.
and where $\vone$ is the all-ones vector of length $i-1$.

Consider the set $U=\{ \aaa \bbb: \aaa\in\{0,1\}^{i-1} \}$. 
We claim that $ X\setminus U$ is both $(i-1)$-balanced and d-closed.

Since both $U$ and $ X$ are $(i-1)$-balanced,  it follows that $X\setminus U$ is $(i-1)$-balanced.

On the other hand, consider any word $\aaa \bbb$ in $U$ and suppose $\aaa' \bbb'\in X$ 
is an ancestor of $\aaa \bbb$.
In other words, $ \bbb \prec  \bbb'$. Since $\vone \bbb$ is a maximal-support word, we have that $ \bbb'= \bbb$
and so, $\aaa' \bbb'\in U$. Hence,  $ X\setminus U$ is d-closed.

Since all words in $U$ have weight at most $w$, $\Psi(\vvv)\ge 1$ for all $\vvv\in U$.
Therefore, $\Xi_{X\setminus U}(U)=\Psi(U)\ge |U|$, contradicting the removal lemma.
\end{proof}

\begin{lemma}\label{lem:cond1}
Given $w$ and $m\ge 2w$, if  
\begin{equation}\label{eq:cond1}
\delta^w\ge 2^{2w-1}\sum_{j=0}^{w-1} \binom{m-w}{j}
\end{equation}
\noindent then any word $\uuu$ of weight at most $w$ is contained in some maximal-support word of weight at least $2w$.
\end{lemma}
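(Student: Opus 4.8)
The plan is to argue by contradiction: suppose some word $\uuu$ of weight at most $w$ is not contained in any maximal-support word of weight at least $2w$. Since $\uuu\in X$ (recall $X=X^{(i-1)}\in\cX_{i-1}$ is d-closed, so if $\uuu$ fails to be maximal-support it has an ancestor in $X$, and iterating we may pass to a maximal-support word $\uuu^*\succ\uuu$ in $X$), there is a maximal-support word $\uuu^*\in X$ with $\uuu\prec\uuu^*$; by assumption $\wt(\uuu^*)<2w$, and by Lemma~\ref{lem:lem21} we have $w+1\le\wt(\uuu^*)<2w$. The idea is to collect \emph{all} such maximal-support words sitting above $\uuu$ (or more crudely, all maximal-support words of weight in the range $[w+1,2w-1]$ that contain $\uuu$) together with a suitable d-closed remainder, and derive a contradiction to the Removal Lemma by showing that removing them from $X$ destroys too few neighbours.

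The key steps are as follows. First, fix $\uuu$ with $\wt(\uuu)\le w$ and let $\cM$ be the set of maximal-support words of $X$ that contain $\uuu$; by hypothesis every element of $\cM$ has weight at most $2w-1$. Let $Y$ be the union of $\uuu$'s ``up-set within $X$'' restricted to weights $\le 2w-1$ together with whatever is needed to make $X\setminus Y$ both d-closed and $(i-1)$-balanced — concretely, take $Y$ to be the set of all words $\vvv\in X$ with $\uuu\prec\vvv$ (i.e.\ the principal filter of $\uuu$ inside $X$), which by the no-large-maximal-support assumption all have weight at most $2w-1$, and then check that $X\setminus Y$ is d-closed (removing an up-set preserves d-closure) and $i$-balanced (this needs the balance structure of $X$ and may force enlarging $Y$ slightly by a balance-completion). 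Second, bound $|Y|$ from above: every word in $Y$ has support containing $\supp(\uuu)$ and weight at most $2w-1$, so $|Y|\le \sum_{j=0}^{w-1}\binom{m-\wt(\uuu)}{j}\le\sum_{j=0}^{w-1}\binom{m-w}{j}$ if $\wt(\uuu)=w$; the balance-completion factor contributes at most a $2^{\,2w-1}$ multiplicative slack (bounded by $2^{i-1}\le 2^{2w-1}$ since $i-1\le w$ in the relevant case, or more generously since balance only acts on $\le 2w-1$ coordinates). Third, bound $\Xi_{X\setminus Y}(Y)=\Psi(Y)$ from below: $Y$ contains at least one word of weight exactly $w$ (namely a word between $\uuu$ and some $\uuu^*$, obtainable by d-closure of $X$ and $\wt(\uuu^*)\ge w+1$), and by Lemma~\ref{lem-Psi-equitable} that single word already contributes $\Psi(\vvv)=\delta^{j}(\delta+1)^{w-j}\ge\delta^w$. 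Hence $\Psi(Y)\ge\delta^w$. Finally, the Removal Lemma gives $|Y|>\Xi_{X\setminus Y}(Y)=\Psi(Y)\ge\delta^w$, so $\delta^w<|Y|\le 2^{2w-1}\sum_{j=0}^{w-1}\binom{m-w}{j}$, contradicting hypothesis~\eqref{eq:cond1}. Therefore no such $\uuu$ exists.

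The main obstacle I anticipate is the bookkeeping around balance: the natural set to remove (the principal filter above $\uuu$) need not be $(i-1)$-balanced, so one must enlarge it to its ``balance closure'' and then (i) re-verify it is still d-closed, (ii) re-verify $X$ minus the enlarged set is still d-closed and $(i-1)$-balanced so that the Removal Lemma applies, and (iii) keep the enlargement factor under the stated $2^{2w-1}$ bound. Getting the constant to come out as exactly $2^{2w-1}$ — rather than something slightly larger — is where the estimate has to be done carefully, using that the balancing only ever acts on a prefix of length $i-1\le 2w-1$ and that words of weight $\le 2w-1$ supported through $\supp(\uuu)$ are few. The lower bound $\Psi(Y)\ge\delta^w$ is, by contrast, essentially immediate from Lemma~\ref{lem-Psi-equitable} once one locates a single weight-$w$ word in $Y$, and the upper bound on $|Y|$ is a routine binomial count. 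So the proof is really a contradiction via the Removal Lemma, with the delicate part being the construction of the removed set $Y$ with the right structural properties and the right size.
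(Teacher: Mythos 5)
Your overall strategy is the same as the paper's: contradict the Removal Lemma by deleting a balanced up-set around $\uuu$, upper-bound its size by $2^{i-1}\sum_{j=0}^{w-1}\binom{m-w}{j}$, and lower-bound the lost neighbourhood by $\Psi\ge\delta^w$ via a weight-$w$ word and Lemma~\ref{lem-Psi-equitable}. However, there is a concrete gap: you anchor the removed set at $\uuu$ itself (the principal filter of $\uuu$ inside $X$, then balance-completed), and your size bound only holds when $\wt(\uuu)=w$ --- a restriction you flag (``if $\wt(\uuu)=w$'') but never discharge. If $\wt(\uuu)=u<w$, the words of $X$ above $\uuu$ have weight up to $2w-1$, so their number is bounded only by $\sum_{j=0}^{2w-1-u}\binom{m-u}{j}$, which is not $\le\sum_{j=0}^{w-1}\binom{m-w}{j}$; the contradiction with \eqref{eq:cond1} then does not follow. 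The missing (easy) idea is exactly the paper's first move: by Lemma~\ref{lem:lem21} and d-closure of $X$ there is a word $\vvv\in X$ of weight exactly $w$ with $\uuu\prec\vvv$, and since every maximal-support word above $\vvv$ is above $\uuu$, one may run the whole argument with $\vvv$ in place of $\uuu$; with this anchor the tail weight $w_b\ge w-i+1$ is large and the binomial count collapses to $\sum_{j=0}^{w-1}\binom{m-w}{j}$ as needed.

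Two smaller points. First, the paper avoids your vague ``balance-completion'' step by defining the removed set directly as $U=\{\aaa\bbb\in X:\vvv_b\prec\bbb\}$, i.e.\ conditioning only on the tail beyond position $i-1$; this set is automatically $(i-1)$-balanced and upward-closed within $X$, so $X\setminus U$ is d-closed and $(i-1)$-balanced with no enlargement bookkeeping, and the weight bound $\wt(\bbb)\le 2w-i$ follows from the fact (previous lemma) that maximal-support words have an all-ones prefix of length $i-1$ --- a fact you also need but do not invoke when you claim every completed word still has weight $\le 2w-1$. Second, your justification of the factor $2^{i-1}\le 2^{2w-1}$ via ``$i-1\le w$ in the relevant case'' is not the right reason; what one actually uses is that if $i-1\ge 2w$ then every maximal-support word already has weight $\ge i-1\ge 2w$ (all-ones prefix) and the lemma is trivial, so one may assume $i-1\le 2w-1$. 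With the weight-$w$ anchor inserted and these details tightened, your proof coincides with the paper's.
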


\begin{proof}We prove by contradication, i.e. suppose that all maximal-support words have weight at most $2w-1$.
From Lemma~\ref{lem:lem21}, we have that $\uuu\prec \vvv$ for some word $\vvv$ with weight $w$.
Write $\vvv=\vvv_a\vvv_b$ with $|\vvv_a|=i-1$.

As before, we set $U=\{ \aaa \bbb\in X: \aaa\in\{0,1\}^{i-1}, \vvv_b\prec  \bbb\}$. 
We claim that $ X\setminus U$ is both $(i-1)$-balanced and d-closed.

Since both $U$ and $ X$ are $(i-1)$-balanced,  it follows that $ X\setminus U$ is $(i-1)$-balanced.

On the other hand, consider any word $\aaa \bbb$ in $U$ and suppose $\aaa' \bbb'\in X$ is an ancestor of $\aaa \bbb$.
In other words, $ \bbb \prec  \bbb'$ and so, $\vvv_b \prec  \bbb'$.
Therfore, $\aaa' \bbb'\in U$, and hence,  $ X\setminus U$ is d-closed.

Next, we provide an upper bound for the set $|U|$. Consider $\aaa \bbb\in U$.
Since all maximal-support words of $\vvv$ has weight at most $2w-1$, the weight of $ \bbb$ is at most $2w-i$.
Let the weight of $\vvv_b$ be $w_b$ and so, $w_b\ge w-i+1$. Since $\vvv_b \prec  \bbb$, the number of choices for $ \bbb$ is
\[ \sum_{j=0}^{2w-i-w_b} \binom{m-w_b-i+1}{j}\le \sum_{j=0}^{w-1} \binom{m-w}{j}.\]
Hence, $|U|\le 2^{i-1}\sum_{j=0}^{w-1} \binom{m-w}{j}\le 2^{2w-1}\sum_{j=0}^{w-1} \binom{m-w}{j}$.

On the other hand, since $ X\setminus U$ is d-closed, we have $\Xi_{X\setminus U}(U)=\Psi(U)\ge \Psi(\vvv)\ge \delta^w$.
Then \eqref{eq:cond1} contradicts the removal lemma that states $|U|>\Xi_{X\setminus U}(U)$.
\end{proof}

Given the existence of such maximal-support words, we now make estimates on $\Psi(Y)$ and $\Psi(Z)$.
For convenience, we partition $Y=\bigcup_{\ell=0}^{m-1} Y_\ell$ and $Z=\bigcup_{\ell=1}^m Z_\ell$ 
such that $Y_\ell$ and $Z_\ell$ are words of weight $\ell$ in $Y$ and $Z$, respectively.

The following lemma is an immediate consequence from the definition of $Y$ and $Z$.

\begin{lemma}For $0\le \ell\le m-1$, $|Y_\ell|=|Z_{\ell+1}|$.
The number of words with weight $\ell$ in $Y$ is equal to the number of words
with weight $\ell +1$ in $Z$.
\end{lemma}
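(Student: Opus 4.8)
The plan is to unwind the definitions of $Y$ and $Z$ and exhibit an explicit weight‑preserving bijection between $Y_\ell$ and $Z_{\ell+1}$. Recall that $Y=\{\uuu\in X : u_i=0,\ \uuu+\eee_i\notin X\}$ and $Z=Y+\eee_i$, so the map $\uuu\mapsto \uuu+\eee_i$ sends $Y$ to $Z$. I would first argue this map is injective: if $\uuu+\eee_i=\uuu'+\eee_i$ then $\uuu=\uuu'$, which is immediate. It is surjective onto $Z$ by the very definition $Z=Y+\eee_i$. Hence $\uuu\mapsto\uuu+\eee_i$ is a bijection from $Y$ to $Z$.

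Next I would observe that this bijection shifts weight by exactly one. Since every $\uuu\in Y$ has $u_i=0$, adding $\eee_i$ flips position $i$ from $0$ to $1$, so $\wt(\uuu+\eee_i)=\wt(\uuu)+1$. Therefore the bijection restricts, for each $\ell$ with $0\le\ell\le m-1$, to a bijection from $Y_\ell=\{\uuu\in Y:\wt(\uuu)=\ell\}$ onto $Z_{\ell+1}=\{\zzz\in Z:\wt(\zzz)=\ell+1\}$. Counting cardinalities on both sides gives $|Y_\ell|=|Z_{\ell+1}|$, which is exactly the claim.

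This is genuinely a one‑line bookkeeping argument, so I do not anticipate any real obstacle; the only thing to be careful about is making explicit that $u_i=0$ holds for every element of $Y$ (so the weight genuinely increases rather than possibly decreasing) and that the partition ranges match up ($Y$ carries weights $0$ through $m-1$ since every element has a zero in position $i$, and correspondingly $Z$ carries weights $1$ through $m$). Once those endpoint remarks are in place, the bijection $\uuu\mapsto\uuu+\eee_i$ does all the work and the lemma follows immediately.
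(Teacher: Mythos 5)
Your proof is correct and matches the paper's treatment: the paper states this lemma as an immediate consequence of the definitions of $Y$ and $Z$, and your explicit bijection $\uuu\mapsto\uuu+\eee_i$ (weight increases by exactly one since $u_i=0$ on $Y$) is precisely the bookkeeping the paper leaves implicit.
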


The following lemma is a  consequence from Lemma~\ref{lem:cond1}.

\begin{lemma}
If\,\,\eqref{eq:cond1} holds, then
for $1\le \ell\le w$,  we have that $|Y_\ell|>|Y_{\ell-1}|$.
\end{lemma}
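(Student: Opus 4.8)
The plan is to compare $Y_\ell$ and $Y_{\ell-1}$ via an explicit injection $Y_{\ell-1}\hookrightarrow Y_\ell$ that is \emph{not} surjective, using the maximal-support words guaranteed by Lemma~\ref{lem:cond1}. Recall that $X$ is the smallest bad set in $\cX_{i-1}$, that $X$ is d-closed and $(i-1)$-balanced, and that $Y=\{\uuu\in X: u_i=0,\ \uuu+\eee_i\notin X\}$. First I would fix $\uuu\in Y_{\ell-1}$, so $\wt(\uuu)=\ell-1\le w-1<w$ and $u_i=0$. Since $X$ is d-closed and $\uuu\in X$, Lemma~\ref{lem:cond1} (together with Lemma~\ref{lem:lem21}) tells us $\uuu$ is contained in some maximal-support word $\vvv\in X$ of weight at least $2w$; in particular $\supp(\vvv)\setminus\supp(\uuu)$ is large, of size at least $2w-(\ell-1)\ge w+1$. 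The key point is to produce, from $\uuu$, a strictly heavier element of $Y$: I want to pick a coordinate $k\in\supp(\vvv)\setminus(\supp(\uuu)\cup\{i\})$ such that $\uuu+\eee_k\in X$ but $(\uuu+\eee_k)+\eee_i\notin X$, which would place $\uuu+\eee_k$ in $Y_\ell$.

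The mechanism for this is d-closedness plus $(i-1)$-balance. Because $\vvv\in X$ and $\supp(\uuu+\eee_k)\subseteq\supp(\vvv)$ for any $k\in\supp(\vvv)$, d-closedness of $X$ forces $\uuu+\eee_k\in X$ for every such $k$. So it remains to choose $k\ne i$, $k\in\supp(\vvv)\setminus\supp(\uuu)$, with $\uuu+\eee_k+\eee_i\notin X$. Suppose, for contradiction, that $\uuu+\eee_k+\eee_i\in X$ for \emph{all} valid $k$. I would then argue that this, combined with $(i-1)$-balance, propagates back to force $\uuu+\eee_i\in X$ — contradicting $\uuu\in Y$. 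Concretely: writing coordinates so that $i$ is one of the first $i-1$ positions is impossible since $u_i=0$ lies among the ``$\bbb$-part'' in the notation $\aaa\bbb$ with $|\aaa|=i-1$; but $(i-1)$-balance says that membership in $X$ depends only on the $\bbb$-suffix when we vary the $\aaa$-prefix, so I should instead exploit d-closedness directly: if $\uuu+\eee_k+\eee_i\in X$ then its descendant $\uuu+\eee_i$ is in $X$, immediately contradicting $\uuu\in Y$. Thus in fact a \emph{single} valid choice of $k$ suffices, and since $\supp(\vvv)\setminus(\supp(\uuu)\cup\{i\})$ has size at least $2w-\ell\ge w\ge 1$, such $k$ exists. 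This gives a well-defined element $\uuu+\eee_k\in Y_\ell$.

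To turn this into an injection I must make the choice of $k$ canonical (e.g.\ the least index in $\supp(\vvv)\setminus(\supp(\uuu)\cup\{i\})$ for a canonically chosen maximal-support superword $\vvv$ of $\uuu$), and then check injectivity: from $\uuu+\eee_k$ one recovers $\uuu$ as $(\uuu+\eee_k)-\eee_{k}$ where $k$ is identifiable because $\uuu+\eee_k$ has exactly one coordinate more than $\uuu$ and the construction is reversible. Finally, to get the \emph{strict} inequality $|Y_\ell|>|Y_{\ell-1}|$ I would exhibit an element of $Y_\ell$ not in the image: for instance, take a maximal-support word $\vvv\in X$ of weight exactly $w$ (one exists by Lemma~\ref{lem:lem21} and the argument of Lemma~\ref{lem:cond1}), note $i\notin\supp(\vvv)$ is impossible if $\vvv$ were to lie over something — more carefully, I would count: any weight-$\ell$ element of $Y$ arising in the image has a specific coordinate $k$ playing a marked role, so it suffices to find a weight-$\ell$ element of $Y$ all of whose weight-$(\ell-1)$ descendants lie outside $Y$, and maximal-support/minimality of $X$ supplies this. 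The main obstacle I anticipate is exactly this last step — establishing the injection is \emph{strict} — since the mere existence of an injection only gives $|Y_\ell|\ge|Y_{\ell-1}|$; the strictness must come from a careful use of the minimality of $X$ (via the Removal Lemma) to rule out a perfect matching between the two layers.
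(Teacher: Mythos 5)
Your opening observation is sound and is exactly the engine of the paper's argument: for $\uuu\in Y_{\ell-1}$, Lemma~\ref{lem:cond1} gives a maximal-support word $\vvv\in X$ of weight at least $2w$ with $\uuu\prec\vvv$; note moreover that $i\notin\supp(\vvv)$ (otherwise $\uuu+\eee_i\prec\vvv$ and d-closedness would put $\uuu+\eee_i$ in $X$, contradicting $\uuu\in Y$), so for \emph{every} $k\in\supp(\vvv)\setminus\supp(\uuu)$ the word $\uuu+\eee_k$ lies in $X$ (d-closedness) and in $Y$ (else $\uuu+\eee_k+\eee_i\in X$ would force $\uuu+\eee_i\in X$). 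But the way you try to finish from there has a genuine gap, and you name it yourself: an injection only yields $|Y_\ell|\ge|Y_{\ell-1}|$, and you never produce the strictness. Worse, the injection itself is not established: choosing $k$ ``canonically'' given $\uuu$ makes the map well defined, but injectivity would require that no word $\www\in Y_\ell$ is the canonical image of two distinct descendants $\www-\eee_j,\www-\eee_{j'}\in Y_{\ell-1}$, and nothing in your construction rules this out --- the marked coordinate is not recoverable from $\www$ alone. The appeal to the Removal Lemma to ``rule out a perfect matching between the two layers'' is a hope, not an argument; the Removal Lemma compares $|U|$ with $\Xi_{X\setminus U}(U)$ for d-closed, balanced complements and does not directly see the layer structure of $Y$.

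The paper closes the argument with a different, and simpler, device: double counting in the bipartite graph on $Y_\ell\cup Y_{\ell-1}$ with edges given by covering. A vertex of $Y_\ell$ has at most $\ell$ neighbours below (it has only $\ell$ weight-$(\ell-1)$ descendants), whereas by the observation above each $\uuu\in Y_{\ell-1}$ has at least $2w-(\ell-1)=2w-\ell+1>\ell$ neighbours above (all the words $\uuu+\eee_k$, $k\in\supp(\vvv)\setminus\supp(\uuu)$, lie in $Y_\ell$). Counting edges from both sides gives
\begin{equation*}
\ell\,|Y_\ell|\;\ge\;|E(\cG)|\;\ge\;(2w-\ell+1)\,|Y_{\ell-1}|\;>\;\ell\,|Y_{\ell-1}|,
\end{equation*}
whence $|Y_\ell|>|Y_{\ell-1}|$ with strictness built in and no injectivity to verify. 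If you want to salvage your route, this degree comparison is precisely the missing ingredient: it replaces both the injectivity check and the strictness step in one stroke.
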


\begin{proof}
We form a bipartite graph $\cG=(Y_\ell \cup Y_{\ell-1} , E)$. 
Two vertices $\vvv_1 \in Y_\ell$ and $\vvv_2 \in Y_{\ell-1}$ are connected by an edge if 
$\vvv_2\prec \vvv_1$.
Observe that the degree of a vertex in $Y_\ell$ is at most $\ell$. 
On the other hand, the degree of a vertex $\uuu$ in $Y_{\ell-1}$ is at least $2w - \ell +1 > \ell$ 
since Lemma~\ref{lem:cond1} provides a maximal support word $\vvv$ of weight at least
$2w$ such that $\uuu\prec \vvv$. 
Since the sum of degrees in $Y_\ell$ is equal the sum of degrees in $Y_{\ell-1}$ and 
each vertex in $Y_\ell$ has a smaller degree than a vertex in $Y_{\ell-1}$,
it follows that $|Y_\ell|>|Y_{\ell-1}|$.
\end{proof}

\begin{corollary}
If\,\,\eqref{eq:cond1} holds, then
for $1\le \ell\le w$, we have that $|Y_\ell|>|Z_{\ell}|$.
\end{corollary}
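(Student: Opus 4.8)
The plan is to simply chain together the two results that immediately precede the corollary. From the lemma stating that $|Y_\ell| = |Z_{\ell+1}|$ for all $0 \le \ell \le m-1$, applying it with $\ell$ replaced by $\ell-1$ (which is legitimate since $1 \le \ell \le w$ forces $0 \le \ell-1 \le m-1$, using $m \ge 2w$), I obtain the identity $|Z_\ell| = |Y_{\ell-1}|$. On the other hand, the lemma just above the corollary asserts that, whenever \eqref{eq:cond1} holds, we have the strict inequality $|Y_\ell| > |Y_{\ell-1}|$ for every $\ell$ in the range $1 \le \ell \le w$. Substituting the identity into this inequality yields $|Y_\ell| > |Y_{\ell-1}| = |Z_\ell|$, which is exactly the claim.

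There is essentially no obstacle here: the corollary is a bookkeeping consequence, and the only point one has to be mildly careful about is that the index shift $\ell \mapsto \ell-1$ stays within the stated ranges of both lemmas, which it does for $1 \le \ell \le w$ under the standing hypothesis $m \ge 2w$ (so that in particular $\ell - 1 \ge 0$ and $\ell - 1 \le m-1$). All the analytic content — the existence of high-weight maximal-support words guaranteeing the degree bound $2w - \ell + 1 > \ell$, and hence the strict growth $|Y_\ell| > |Y_{\ell-1}|$ — has already been absorbed into the preceding lemma, so nothing further needs to be estimated.
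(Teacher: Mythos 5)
Your proof is correct and is exactly the intended derivation: the paper states the corollary without proof precisely because it follows immediately by combining $|Z_\ell| = |Y_{\ell-1}|$ (the preceding counting lemma, shifted by one) with $|Y_\ell| > |Y_{\ell-1}|$ (the lemma proved under \eqref{eq:cond1}). Your check that the index shift stays in range is a fine touch, but otherwise there is nothing to add.
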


We now prove the main result on asymptotic existence.

\begin{proof}[Proof of Theorem~\ref{theorem-main}]
Applying Lemma~\ref{lem-Psi-equitable}, we have that 
\[
\Psi(Y)  = \sum_{\ell=0}^m \sum_{\vvv\in Y_\ell} \Psi(\vvv)\\
 = \sum_{\ell=0}^w |Y_\ell| \left(c_\ell\delta^w+O(\delta^{w-1})\right)
 \ge \sum_{\ell=1}^w |Y_\ell| \left(c_\ell\delta^w+O(\delta^{w-1})\right). \]
 Similar manipulations yield
 \[ \Psi(Z)  
 = \sum_{\ell=1}^w |Z_\ell| \left(c_\ell\delta^w+O(\delta^{w-1})\right). \]
Since $|Y_\ell|-|Z_\ell|\ge 1$, we estimate the difference $\Psi(Y)-\Psi(Z)$  by 
\begin{equation}\label{eq:condYZ}
\Psi(Y)-\Psi(Z) \ge \sum_{\ell=1}^w c_\ell\delta^w +O(\delta^{w-1}).
\end{equation}

Therefore, for fixed values of $w$, if we choose $m$ sufficiently large such that  \eqref{eq:cond1} holds and  
the right hand side\eqref{eq:condYZ} is nonnegative, we have that $\Psi(Z) \le \Psi(Y)$. In other words, we establish \eqref{eq:target2} and complete our induction argument. Hence, no bad sets exist in the associated compatibility graph 
and a perfect matching or an $(m,n,w)$-domination mapping exists.
\end{proof}

Unfortunately, estimating the values of $c_\ell$ is difficult and 
hence, we are unable to estimate a lower bound of $m$ for which domination mapping exists.
Nevertheless, for the remaining part of the section, we consider the case when $m$ divides $n$ 
and demonstrate that the requirement defined by \eqref{eq:cond1} is mild.
In this case, the domination graph is regular where each vertex in $[m]$ has degree exactly $\delta$.
Then it follows from symmetry that $\Psi(\vvv)$ is dependent only on the weight of $\vvv$.
In other words, for any word $\vvv$ of weight $\ell$, we can write $\Psi(\vvv)$ as $\Psi_\ell$.
Therefore,

\[\Psi(Y)=\sum_{\ell=0}^w |Y_\ell|\Psi_\ell \ge \sum_{\ell=1}^w |Y_\ell|\Psi_\ell
> \sum_{\ell=1}^w |Z_{\ell}|\Psi_\ell 
=\Psi(Z). \]

Hence, we have the following corollary.

\begin{corollary}\label{cor-main}
If\,\,\eqref{eq:cond1} holds,
then $\Psi(Z) < \Psi(Y)$.
\end{corollary}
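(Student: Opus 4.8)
The plan is to reduce everything to the weight decomposition already set up just above the corollary, exploiting that when $m\mid n$ the equitable domination graph is $\delta$-regular, so that $\Psi(\vvv)$ depends only on $\wt(\vvv)$; write $\Psi_\ell$ for its common value on weight-$\ell$ words. First I would record the two facts about $\Psi_\ell$ that I need, both of which are the regular specialization of Lemma~\ref{lem-Psi-equitable}: (i) $\Psi_\ell=0$ for every $\ell>w$, since a word of weight exceeding $w$ adds no neighbour in $\cR$ beyond those already reached by its proper descendants; and (ii) $\Psi_\ell>0$ for every $1\le\ell\le w$, because in the regular case $\Psi_\ell=\sum_{i_1+\cdots+i_\ell\le w,\ i_j\ge 1}\prod_{k=1}^{\ell}\binom{\delta}{i_k}$, and the term $i_1=\cdots=i_\ell=1$ alone contributes $\binom{\delta}{1}^{\ell}\ge 1$.

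Next I would split $\Psi(Y)=\sum_{\ell}|Y_\ell|\,\Psi_\ell$ and $\Psi(Z)=\sum_{\ell}|Z_\ell|\,\Psi_\ell$ along the weight strata $Y=\bigcup_{\ell}Y_\ell$, $Z=\bigcup_{\ell}Z_\ell$. By fact (i) only the terms with $\ell\le w$ survive, and since $Z=Y+\eee_i$ contains no weight-$0$ word, this gives $\Psi(Y)=\sum_{\ell=0}^{w}|Y_\ell|\,\Psi_\ell$ and $\Psi(Z)=\sum_{\ell=1}^{w}|Z_\ell|\,\Psi_\ell$; discarding the nonnegative $\ell=0$ summand yields $\Psi(Y)\ge\sum_{\ell=1}^{w}|Y_\ell|\,\Psi_\ell$.

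Finally I would invoke the corollary immediately preceding this one: under \eqref{eq:cond1}, $|Y_\ell|\ge|Z_\ell|+1$ for all $1\le\ell\le w$. Multiplying by $\Psi_\ell>0$ (fact (ii)) and summing, $\sum_{\ell=1}^{w}|Y_\ell|\,\Psi_\ell\ge\sum_{\ell=1}^{w}|Z_\ell|\,\Psi_\ell+\sum_{\ell=1}^{w}\Psi_\ell>\sum_{\ell=1}^{w}|Z_\ell|\,\Psi_\ell$, the last step being strict because $\sum_{\ell=1}^{w}\Psi_\ell>0$. Chaining the displays gives $\Psi(Y)\ge\sum_{\ell=1}^{w}|Y_\ell|\,\Psi_\ell>\sum_{\ell=1}^{w}|Z_\ell|\,\Psi_\ell=\Psi(Z)$, i.e.\ the strict form of \eqref{eq:target2}. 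I do not expect a genuine obstacle here: Lemma~\ref{lem:cond1} and the stratum inequality $|Y_\ell|>|Z_\ell|$ have already done the work, so the only thing to watch is the bookkeeping at the extreme strata ($\ell=0$ and $\ell>w$) so that the final inequality comes out strict rather than merely non-strict, together with checking that the $\delta$-regular symmetry really does make $\Psi$ a function of weight alone.
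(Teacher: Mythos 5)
Your proposal is correct and follows essentially the same route as the paper: decompose $\Psi(Y)$ and $\Psi(Z)$ over weight strata using the $\delta$-regular symmetry (so $\Psi(\vvv)=\Psi_\ell$ depends only on $\wt(\vvv)$), drop the $\ell=0$ and $\ell>w$ terms, and apply the preceding stratum inequality $|Y_\ell|>|Z_\ell|$ together with $\Psi_\ell>0$. Your explicit justification that $\Psi_\ell\ge 1$ for $1\le\ell\le w$ (via the all-ones index term) is a minor tightening of the strictness bookkeeping that the paper leaves implicit, but the argument is the same.
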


The next theorem provide certain sufficient numerical conditions for the existence of $(m,\delta m, w)$-domination mappings and imply Theorem~\ref{theorem-main}.

\begin{theorem}
Let $w\ge 3$. Let $\epsilon>0$ and set $N_\epsilon$ such that $m^{\epsilon/(1+\epsilon)}\ge 1+\log m$ for $m\ge N_\epsilon$.
If 
\begin{align}
m & \ge \max\left\{(2w)^{1+\epsilon}, N_\epsilon\right\}, \label{eq:cond2}\\
2^m & \le \sum_{j=0}^w\binom{\delta m}{j}, \label{necessary}
\end{align}
then \eqref{eq:cond1} holds and therefore, an $(m,\delta m,w)$-domination mapping exists.
\end{theorem}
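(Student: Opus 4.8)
The plan is to prove the numerical implication ``\eqref{necessary} $\Rightarrow$ \eqref{eq:cond1}'' by estimating both sides of \eqref{eq:cond1} as a power of $m$ times $2^m$, and then to read off existence of the mapping from the results already established in this section. Indeed, once \eqref{eq:cond1} is known, Corollary~\ref{cor-main} gives $\Psi(Z) < \Psi(Y)$ (here $m \mid n$, so the equitable graph is $\delta$-regular and Lemma~\ref{lem-Psi-equitable} applies), i.e.\ \eqref{eq:target2} holds; the induction of Section~\ref{sec:proof} then shows the compatibility graph has no bad set, Hall's theorem (Theorem~\ref{thm:Hall}) supplies a perfect matching, and Theorem~\ref{thm:sub_dominate} converts it into an $(m,\delta m,w)$-domination mapping. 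So the entire content is the arithmetic.

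First I would turn \eqref{necessary} into a lower bound on $\delta^w$. Since $m \ge (2w)^{1+\epsilon} \ge 2w$ and $\delta \ge 1$ (if $\delta = 0$ the right-hand side of \eqref{necessary} is $1 < 2^m$), the coefficients $\binom{\delta m}{j}$ are non-decreasing for $0 \le j \le w$, so
\[
2^m \;\le\; \sum_{j=0}^{w}\binom{\delta m}{j} \;\le\; (w+1)\binom{\delta m}{w} \;\le\; \frac{(w+1)\,(\delta m)^w}{w!},
\]
which rearranges to $\delta^w \ge w!\,2^m / \bigl((w+1)\,m^w\bigr)$. Symmetrically, using $\binom{m-w}{j}\le\binom{m}{j}$ and monotonicity of $\binom{m}{j}$ for $j\le w-1$,
\[
2^{2w-1}\sum_{j=0}^{w-1}\binom{m-w}{j} \;\le\; 2^{2w-1}\,w\binom{m}{w-1} \;\le\; \frac{2^{2w-1}\,w\,m^{w-1}}{(w-1)!}.
\]
Hence \eqref{eq:cond1} follows once $w!\,2^m/\bigl((w+1)m^w\bigr) \ge 2^{2w-1}\,w\, m^{w-1}/(w-1)!$, that is, once
\[
2^m \;\ge\; \frac{w(w+1)}{w!\,(w-1)!}\,\bigl(2m\bigr)^{2w-1}.
\]
This is exactly where $w \ge 3$ is used: for every $w \ge 3$ one has $w(w+1) \le w!\,(w-1)!$ (with equality at $w=3$), so it suffices to prove $2^m \ge (2m)^{2w-1}$, equivalently $m \ge (2w-1)\bigl(1 + \log_2 m\bigr)$.

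Finally I would discharge this last inequality using \eqref{eq:cond2}. Raising $m \ge (2w)^{1+\epsilon}$ to the power $1/(1+\epsilon)$ gives $2w - 1 < 2w \le m^{1/(1+\epsilon)}$, while $m \ge N_\epsilon$ gives $1 + \log_2 m \le m^{\epsilon/(1+\epsilon)}$; multiplying the two bounds yields $(2w-1)(1+\log_2 m) < m^{1/(1+\epsilon)}\cdot m^{\epsilon/(1+\epsilon)} = m$, which closes the argument. The main thing to get right is the constant bookkeeping in the middle step --- in particular noticing that $w(w+1)\le w!\,(w-1)!$ is precisely what makes $w\ge 3$ (rather than $w\ge 1$) the natural threshold --- together with confirming that the logarithm base implicit in the definition of $N_\epsilon$ matches the $\log_2(2m)$ appearing above (any stray constant factor is absorbed into $N_\epsilon$). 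One should also check that the hypotheses on $m$ are strong enough for the ``sufficiently large $m$'' caveats in Lemma~\ref{lem-Psi-equitable} and Corollary~\ref{cor-main}.
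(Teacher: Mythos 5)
Your proposal is correct and follows essentially the same route as the paper's proof: bound $\sum_{j\le w}\binom{\delta m}{j}$ by $(w+1)(\delta m)^w/w!$ to turn \eqref{necessary} into $\delta^w\ge w!\,2^m/((w+1)m^w)$, use \eqref{eq:cond2} to get $2^m\ge(2m)^{2w-1}$ (the paper gets the marginally stronger $(2m)^{2w}$), compare constants via the $w\ge 3$ inequality, and then conclude through Corollary~\ref{cor-main}, the induction on bad sets, Hall's theorem, and Theorem~\ref{thm:sub_dominate}. Your bookkeeping is if anything slightly tidier (bounding the tail sum by $w\binom{m}{w-1}$ avoids the implicit $m\ge 3w-2$ needed for the paper's $w\binom{m-w}{w-1}$ step, and you correctly fix the paper's misprinted constant $\frac{(w+1)!}{w}$ to $\frac{w!}{w+1}$), but this is the same argument, not a different one.
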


\begin{proof}
Since $m\ge 2w$, 
observe that $\sum_{j=0}^w\binom{\delta m}{j}\le (w+1)\binom{\delta m}{w}=(w+1)(\delta m)^w/w!$\, .
Hence, \eqref{necessary} implies that 
\begin{equation}
\delta^w\ge 2^m \frac{(w+1)!}{wm^w}.\label{eq:5}
\end{equation}
On the other hand,  \eqref{eq:cond2} implies that 
\[
m  = m^{\frac{1}{1+\epsilon}}m^{\frac{\epsilon}{1+\epsilon}}  \ge 2w(1+\log m) \ge 2w\log (2m). 
\]
Hence, $2^m\ge (2m)^{2w}$. Together with \eqref{eq:5}, we have
\[ \delta^w \ge  2^{2w} \frac{w! m^{w}}{(w+1)}\ge 2^{2w-1}\frac{wm^w}{(w-1)!} 
\ge 2^{2w-1} w\binom{m-w}{w-1}
\ge 2^{2w-1}\sum_{j=0}^{w-1} \binom{m-w}{j}.\]
Note that $2w!/(w+1)\ge w/(w-1)!$ for $w\ge 3$. Therefore, \eqref{eq:cond1} holds as desired.
Corollary~\ref{cor-main} then yields \eqref{eq:target2}, which in turn completes our induction argument.
\end{proof}




\appendix


\section{Constructions and Descendant Arrays}

\subsection{Descendant arrays}
\label{appendix-A1}

We will now make use of a different representation of $(m,n,w)$-domination mappings.
For two vectors of the same length $v$ and $u$ we say that
$v$ \emph{covers} $u$ (or $v$ \emph{dominates} $u$; or $u$ is a \emph{descendant} of $v$) and
denote it by $u \prec v$
if $u$ has a \emph{zero} in each entry in which $v$ has a \emph{zero}.
In other words, the value of $u$ in each position is less or equal from the
value of $v$ in the same position.
Let $A$ be an $r \times \ell_1$ binary array and $B$ be an $r \times \ell_2$ binary array, $\ell_2 \geq \ell_1$.
If for each column in $B$ there exists a column in $A$ which dominates it, then
$(A,B)$ is called a pair of $(\ell_1,\ell_2)$-\emph{descendant arrays}.
Let $A$ be a $2^m \times m$ matrix which contains all the binary words of length $m$ (in lexicographic order)
and let $B$ be a $2^m \times n$ matrix which contains distinct binary words
of length~$n$ and weight at most~$w$. This pair of arrays is called
a pair of $(n,m,w)$-descendant arrays if $(A,B)$ is a pair of $(n,m)$-descendant arrays.
By definition we infer that

\begin{lemma}
\label{lem:col_map}
An $(m,n,w)$-domination mapping exists if and only if there exists
a pair of $(m,n,w)$-descendant arrays.
\end{lemma}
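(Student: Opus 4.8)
The plan is to observe that \Lref{lem:col_map} is essentially a change of perspective: a pair of descendant arrays is just a tabulation of a domination mapping, with the left array listing inputs and the right array listing outputs, and the covering condition on columns playing the role of the domination relation on edges. So the proof is a direct translation in both directions, and there is no real obstacle beyond carefully matching up definitions.

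First I would set up notation. Let $A$ be the $2^m \times m$ array whose rows are all binary words of length $m$ in lexicographic order; write $A_{\xxx}$ for the row indexed by $\xxx \in \bits^m$, so $A_\xxx = \xxx$. For $i \in [m]$ let $A^{(i)}$ denote the $i$-th column of $A$; note $A^{(i)}$ has a zero in row $\xxx$ precisely when $x_i = 0$. Similarly, for a candidate right array $B$ of size $2^m \times n$, write $B_\xxx$ for its row and $B^{(j)}$ for its $j$-th column.

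Next, the forward direction. Suppose $\varphi\colon \bits^m \to \cB(n,w)$ is a $G$-domination mapping for a domination graph $G = ([m]\cup[n], E)$ with no isolated right vertices (and, by \Lref{no-isolated}, no isolated left vertices). Define $B$ by $B_\xxx = \varphi(\xxx)$. Each row of $B$ has weight at most $w$ since $\varphi$ maps into $\cB(n,w)$, and the rows are distinct since $\varphi$ is injective; hence $B$ is a valid right array of $(n,m,w)$-descendant-array type apart from the covering condition. For the covering condition, fix a column $B^{(j)}$, $j \in [n]$. Since $G$ has no isolated right vertices there is some $i \in [m]$ with $(i,j) \in E$, and the domination property says $x_i = 0 \implies y_j = 0$ for $\yyy = \varphi(\xxx)$, i.e. whenever $A^{(i)}$ has a zero in row $\xxx$, so does $B^{(j)}$; that is exactly $B^{(j)} \prec A^{(i)}$. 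Thus every column of $B$ is dominated by some column of $A$, so $(A,B)$ is a pair of $(n,m,w)$-descendant arrays.

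For the converse, suppose $(A,B)$ is a pair of $(n,m,w)$-descendant arrays. Define $\varphi(\xxx) = B_\xxx$. Then $\varphi$ maps into $\cB(n,w)$ because every row of $B$ has weight at most $w$, and $\varphi$ is injective because the rows of $B$ are distinct. Construct $G = ([m]\cup[n],E)$ by choosing, for each column $B^{(j)}$, some column $A^{(i)}$ dominating it (such an $i$ exists by the descendant-array hypothesis) and putting $(i,j) \in E$; this leaves no isolated right vertex. For any $(i,j)\in E$ and any $\xxx$, if $x_i = 0$ then $A^{(i)}$ has a zero in row $\xxx$, hence $B^{(j)}$ does too since $B^{(j)} \prec A^{(i)}$, i.e. $y_j = 0$ where $\yyy = \varphi(\xxx)$. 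So $\varphi$ is $G$-dominating, and therefore an $(m,n,w)$-domination mapping. Since both directions are now established, the lemma follows; the only point demanding a little care is remembering that ``no isolated right vertices'' corresponds precisely to ``every column of $B$ is dominated by some column of $A$,'' which is why the descendant-array definition bakes that requirement in.
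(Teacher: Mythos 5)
Your proof is correct and follows essentially the same approach as the paper: identify rows of $A$ with inputs and rows of $B$ with images under $\varphi$, and observe that column-wise domination in the arrays corresponds exactly to the edges of the domination graph (the paper's own proof simply states this as an immediate observation). Your write-up just makes both directions of this dictionary explicit.
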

\begin{proof}
This is an immediate observation from the definition of the domination graph and defining\linebreak
$\varphi (v_1,v_2,\ldots,v_m) = (u_1,u_2,\ldots,u_n)$, where
$(v_1,v_2,\ldots,v_m)$ and $(u_1,u_2,\ldots,u_n)$ are the $j$th words of the matrices
$A$ and $B$, respectively.
\end{proof}

The description which leads to
Lemma~\ref{lem:col_map} can be used as an alternative way to define the injective
mapping $\varphi (m,n,w)$ and to verify whether such a mapping is a domination mapping.

\subsection{Construction of domination mappings for $w=2$}
\label{appendix-A}


In this appendix we consider $(m,n,2)$-domination mappings for which $n=\nu (m,w)$ if $m$ is odd
and discuss the case of even $m$. We start with odd $m$.
We claim that for odd $m=2\ell+1$ there exists a $(2\ell+1,2^{\ell+1},2)$-domination mapping.
Note, that $\sum_{i=0}^2 \binom{2^{\ell+1}}{i} = 2^{2\ell +1} + 2^\ell +1$ and
$\sum_{i=0}^2 \binom{2^{\ell+1}-1}{i} = 2^{2\ell +1} - 2^\ell +1$, and hence by Lemma~\ref{lem:sum_cond}
we have that a $(2\ell+1,2^{\ell+1},2)$-domination mapping is optimal.
A recursive construction for such a domination mapping will be given.
We start with the $(3,4,2)$-domination mapping from Example~\ref{Example1}.

Assume now that there exists a $(2\ell-1,2^\ell,2)$-domination mapping $\varphi$
with the degree sequence\linebreak $(1,1,2,2,2,4,4,8,8,\ldots,2^{\ell-3},2^{\ell-3},2^{\ell-2},2^{\ell-2})$.
We will construct a $(2\ell+1,2^{\ell+1},2)$-domination mapping $\varphi'$
with the degree sequence $(1,1,2,2,2,4,4,8,8,\ldots,2^{\ell-2},2^{\ell-2},2^{\ell-1},2^{\ell-1})$.

Let $(A(\varphi),B(\varphi))$ be a pair of $(2\ell-1,2^\ell,2)$-descendant arrays related to the
domination mapping~$\varphi$. We will describe now a construction for a pair of $(2\ell+1,2^{\ell+1},2)$-descendant
arrays $(A(\varphi'),B(\varphi'))$ from which $\varphi'$ can be derived.
Let $A_1 A_2$ be a $2^{2\ell-1} \times (2\ell+1)$ matrix, where $A_1$ has two columns
and $A_2$ has $2\ell -1$ columns; $A_1A_2$ represents a quarter of the matrix $A(\varphi')$,
i.e. $A_1$ has one of the for values 00, 01, 10, or 11, and
$A_2$ has $2^{2\ell -1}$ rows with exactly all the words in $\F_2^{2\ell -1}$ in the lexicographic order.
Let $B_1 B_2$ be a $2^{2\ell-1} \times 2^{\ell+1}$ matrix, where $B_1$ has $2^\ell$ columns and
$B_2$ has $2^\ell$ columns; $B_1B_2$ represents a quarter of the matrix $B(\varphi')$.
Furthermore, let $B_1 = B_1^0 B_1^1$, where $B_1^0$ and $B_1^1$ are $2^{2\ell-1} \times 2^{\ell-1}$ matrices.
The $i$th row of $A_1 A_2$ will be mapped by $\varphi'$ to the $i$th row of $B_1 B_2$.
We distinguish now between four cases related to the values of the two columns of $A_1$.

\begin{enumerate}[{[D1]}]
\item If the two columns of $A_1$ are 00, then $B_1$ is the all-zeroes matrix and
$(A_2,B_2)$ is a pair of $(2\ell-1,2^\ell,2)$-descendant arrays with the degree sequence
\linebreak $(1,1,2,2,2,4,4,8,8,\ldots,2^{\ell-3},2^{\ell-3},2^{\ell-2},2^{\ell-2})$.

\item If the two columns of $A_1$ are 11, then $B_2$ is the all-zeroes matrix, and
$B_1$ can be any matrix whose rows are different, $2^{2\ell-1} - 2^{\ell-1}$ rows have weight two, $2^{\ell-1}$ rows have
weight one ($2^{\ell-2}$ of these \emph{ones} in the last $2^{\ell-2}$ columns of $B_1^0$
and the other $2^{\ell-2}$ \emph{ones} in the last $2^{\ell-2}$ columns, which are the
most significant bits, of $B_1^1$).
Simple enumeration yields that there are $2^{2\ell -1}$ such possible different rows as required.

\item If the two columns of $A_1$ are 01 then the matrix $B_2$ is chosen in a way that
each column, except for the first one (least significant bit)
has exactly $2^{\ell-1}$ \emph{ones} and the first column
has $2^{\ell-2}$ \emph{ones}; each row, except for $2^{\ell-2}$ rows of $B_2$, has exactly one \emph{one}
and these $2^{\ell-2}$ rows are all-zeroes rows.
The distribution of the \emph{ones} is done in a way
that the requirements of the related domination graph,
i.e. the degree sequence of $(A_2,B_2)$ as a pair of $(2\ell-1,2^\ell)$-descendant arrays
is\linebreak $(1,1,2,2,2,4,4,8,8,\ldots,2^{\ell-3},2^{\ell-3},2^{\ell-2},2^{\ell-2})$.
For each $2^{\ell-1}$ ones in the same column of $B_2$,
the corresponding $2^{\ell-1}$ rows in $B_1$ have unique ones in different $2^{\ell-1}$ columns
in the last $2^{\ell-1}$ columns of $B_1$. The same is done for the first column of $B_2$
for which the all-zeroes rows are added. The concrete definition will be left as an exercise (not completely trivial).

\item If the two columns of $A_1$ are 10 then $B_2$ is exactly as in the case
where the two columns of $A_1$ are 01, In $B_1$ the first $2^{\ell-1}$ columns are swapped with the
last $2^{\ell-1}$ columns compared to the matrix $B_1$ in the case where the first two columns of $A_1$ are 01,
i.e. $B_1^0$ is swapped with $B_1^1$.
\end{enumerate}

Finally, $A(\varphi')$ consists of the four matrices $A_1 A_2$ of these four cases
and $B(\varphi')$ are the related four matrices $B_1 B_2$.
The construction leads to the following result.

\begin{theorem}
\label{thm:w=2}
If $(A(\varphi),B(\varphi))$ is a pair of $(2\ell-1,2^\ell,2)$-descendant arrays then
$(A(\varphi'),B(\varphi'))$ are two $(2\ell+1,2^{\ell+1},2)$-descendant arrays.
\end{theorem}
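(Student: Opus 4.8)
The plan is to verify the two defining properties of a pair of $(2\ell+1,2^{\ell+1},2)$-descendant arrays for the constructed pair $(A(\varphi'),B(\varphi'))$: first, that $A(\varphi')$ contains all $2^{2\ell+1}$ binary words of length $2\ell+1$ and that $B(\varphi')$ has all-distinct rows, each of weight at most $2$; and second, that every column of $B(\varphi')$ is dominated by some column of $A(\varphi')$, i.e.\ the required domination-graph structure (with the stated degree sequence) actually holds. I would organize the verification case by case according to the four values $00,01,10,11$ of the two columns of $A_1$, since $A(\varphi')$ is by construction the vertical concatenation of the four blocks $A_1A_2$, and $B(\varphi')$ the concatenation of the corresponding blocks $B_1B_2$.

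First I would check that $A(\varphi')$ enumerates $\bits^{2\ell+1}$: in each of the four blocks, $A_1$ is a fixed pair of bits and $A_2$ runs over all of $\bits^{2\ell-1}$ in lexicographic order, so the four blocks together give each word of $\bits^{2\ell+1}$ exactly once. Next I would check that the rows of $B(\varphi')$ are distinct words of weight $\le 2$. Within a block this is where the bulk of the bookkeeping lies: in case [D1] the rows are $(\mathbf 0, B_2$-row$)$ with $B_2$ coming from a valid $(2\ell-1,2^\ell,2)$ pair, hence weight $\le 2$ and distinct; in case [D2] one invokes the stated enumeration count $2^{2\ell-1}$ of admissible weight-$\le 2$ rows supported on $B_1$; in cases [D3]/[D4] each row is either all-zero or has exactly one $1$ in $B_2$ together with exactly one matching $1$ in $B_1$ (or an all-zero $B_1$-part for the rows coming from the all-zero rows of $B_2$), so again weight $\le 2$. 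Distinctness \emph{across} blocks then follows because the supports land in genuinely different coordinate regions: block [D1] has support only in the $B_2$-coordinates with the $B_1$-part zero, block [D2] has support only in $B_1$, and blocks [D3], [D4] mix one $B_2$-coordinate with one $B_1$-coordinate, with [D3] and [D4] distinguished by the swap $B_1^0 \leftrightarrow B_1^1$. Counting rows ($4\cdot 2^{2\ell-1}=2^{2\ell+1}$) confirms we obtain all of $\bits^{2\ell+1}$ mapped to distinct images.

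For the domination property I would exhibit the explicit domination graph $G'$ on $[2\ell+1]\cup[2^{\ell+1}]$ with degree sequence $(1,1,2,2,2,4,4,\ldots,2^{\ell-1},2^{\ell-1})$: the two columns of $A_1$ dominate the $2^\ell$ columns of $B_1$ (the two new left vertices of degree $2^{\ell-1}$ each, one handling the $B_1^0$ columns and one the $B_1^1$ columns), while the $2\ell-1$ columns of $A_2$ dominate the $2^\ell$ columns of $B_2$ exactly as in the given $(2\ell-1,2^\ell,2)$ mapping $\varphi$. One must then verify that in each of the four blocks, whenever a column of $A$ is $0$ in a given row, the corresponding dominated columns of $B$ are $0$ in that row. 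For the $A_2/B_2$ part this is inherited from $\varphi$ in cases [D1], [D3], [D4], and is vacuous in [D2] where $B_2=\mathbf 0$. For the $A_1/B_1$ part: in [D1], $A_1=00$ and $B_1=\mathbf 0$, fine; in [D2], $A_1=11$, so the antecedent ``$x_i=0$'' never occurs, fine; in [D3], $A_1=01$, so the first column of $A_1$ is $0$ — and here one uses that the design places the $B_1^0$-ones only in rows where $B_2$'s first column is $0$... I would instead simply align the edges so that the first $A_1$-column (value $0$ throughout this block) is \emph{not} an endpoint of any edge into $B_1$ in this block, which is consistent because the second $A_1$-column is $1$ throughout [D3]; symmetrically for [D4] with the swap.

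The main obstacle I anticipate is case [D3] (equivalently [D4]): the excerpt itself flags that ``the concrete definition will be left as an exercise (not completely trivial)''. The delicate point is to distribute the $2^{\ell-1}$ ones in each column of $B_2$ (and $2^{\ell-2}$ in its first column, plus the $2^{\ell-2}$ all-zero rows) so that \emph{simultaneously} (a) the induced $(A_2,B_2)$ structure realizes the prescribed degree sequence $(1,1,2,2,2,4,4,\ldots,2^{\ell-2},2^{\ell-2})$ of the smaller mapping, and (b) the rows of $B_1$ carrying the matching unique ones remain globally distinct from the rows produced in the other three cases. I would handle (a) by a counting argument showing the multiset of column weights of $B_2$ is compatible with that degree sequence (each left vertex of degree $d$ in the smaller graph must see exactly $d$ of the $2^\ell$ right columns, and $\sum_i \delta_i = 2^\ell$ as in~\eqref{degree-distribution}), and handle (b) by the coordinate-region separation already noted, together with the observation that within [D3] the pair (which $B_2$-column the lone $1$ sits in, which $B_1$-column its partner sits in) is itself a distinguishing label for the non-all-zero rows. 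Once these checks pass, Lemma~\ref{lem:col_map} converts the descendant-array pair into the desired $(2\ell+1,2^{\ell+1},2)$-domination mapping, completing the induction.
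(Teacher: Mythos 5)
The paper itself never writes out a proof of Theorem~\ref{thm:w=2}: the theorem is asserted as a summary of the block construction [D1]--[D4], with the concrete instantiation of case [D3] explicitly ``left as an exercise (not completely trivial)''. Your outline takes the same route (case-by-case verification of distinctness, weight $\le 2$, and column domination, then Lemma~\ref{lem:col_map}), and you handle the easy parts correctly: the enumeration of $A(\varphi')$, cases [D1] and [D2], the weight count $\binom{2^\ell}{2}+2^{\ell-1}=2^{2\ell-1}$, and the intended domination graph in which the two new columns dominate $B_1^0$ and $B_1^1$. But the proposal has a genuine error and leaves the crux unproved. The error: in [D3]/[D4] the $2^{\ell-2}$ rows where $B_2$ is all-zero do \emph{not} get an all-zero $B_1$-part, as your parenthetical claims; the construction groups them with the rows carrying the first column's $2^{\ell-2}$ ones and gives each of them a single one in $B_1^1$ (in $B_1^0$ for [D4]). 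Under your reading, blocks [D1], [D3], [D4] would contain $1+2^{\ell-2}+2^{\ell-2}$ copies of the all-zero word, so the rows of $B(\varphi')$ are not distinct and injectivity fails. Even with the correct reading, these weight-one rows have support only in $B_1$, exactly like the $2^{\ell-1}$ weight-one rows of [D2], so ``coordinate-region separation'' alone does not give distinctness; one must also coordinate the column choices (e.g.\ [D2] occupies the last $2^{\ell-2}$ columns of $B_1^0$ and of $B_1^1$, forcing [D3]/[D4] to use the complementary columns).

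The second, larger gap is the one the paper itself flags: you must actually exhibit (or prove the existence of) a matrix $B_2$ in case [D3] satisfying simultaneously (i) every row has at most one $1$, with exactly $2^{\ell-2}$ all-zero rows, (ii) the prescribed column sums ($2^{\ell-1}$ per column, $2^{\ell-2}$ in the first), and (iii) the inherited domination constraint that the ones of a column assigned to left vertex $i$ of the small graph lie only in rows of $A_2$ whose $i$-th coordinate is $1$. Your proposed ``counting argument showing the multiset of column weights of $B_2$ is compatible with that degree sequence'' only checks global totals; it does not establish that such a placement of ones exists. This is a feasibility (transportation/Hall-type) question that must be verified against the specific degree sequence $(1,1,2,2,2,4,4,\ldots,2^{\ell-2},2^{\ell-2})$ --- or settled by an explicit construction, e.g.\ built from $\varphi$ itself --- and in addition the placement must mesh with the assignment of distinct $B_1^1$-ones used for row distinctness. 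Until that step is supplied, the induction step, and hence the theorem, is not proved by your argument.
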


Theorem~\ref{thm:w=2} implies the existence of a $(2\ell+1,2^{\ell+1},2)$-domination
mapping for odd $m$. What about an $(m,n,2)$-domination mapping for even $m$.
The following $(m,n)$ pairs were found by computer search to form
optimal $(m,n,2)$-domination mappings: $(4,6)$, $(6,11)$, $(8,23)$, $(10,45)$,
$(12,90)$, $(14,181)$, $(16,362)$, $(18,724)$, $(20,1448)$, $(22,2896)$,
$(24,5793)$, $(26,11585)$, $(28,23170)$, and so on. For $m \geq 6$, these
mappings attains the bound of Lemma~\ref{lem:sum_cond}. In this sequence of optimal
domination mappings one can observe that there is no obvious rule and a hence a recursive
construction for optimal domination mapping won't be an easy task. But, optimal domination mappings exist
and an existence proof for such mappings can be given similarly (but with a simpler proof)
to the existence proof in Section~\ref{sec:proof}.

\section{Proof of Proposition~\ref{prop:reduction}}\label{app:reduction}

We provide a detailed proof of Proposition~\ref{prop:reduction}. To this end, we need the following lemmas.

\begin{lemma}
The set of all $A$-preserving permutations is a subgroup of the set of permutations on $[N]$.
\end{lemma}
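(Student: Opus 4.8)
The plan is to verify directly that the set $\cH$ of all $A$-preserving permutations is a subgroup of $\bbS_{[N]}$ by checking the three standard closure properties: $\cH$ contains the identity, $\cH$ is closed under composition, and $\cH$ is closed under inversion. Throughout, I would only use the elementary facts about the representing matrices recorded earlier, namely that $P_{\mathrm{id}}=I$, that $P_\pi^{-1}=P_{\pi^{-1}}$, and that a product of two permutation matrices is again a permutation matrix, with $P_\pi P_\rho = P_{\rho\circ\pi}$ under the convention $P_\pi\xxx=\xxx^\pi$ fixed in the paper. Since $\bbS_{[N]}$ is a finite group, one could even get away with proving only nonemptiness and closure under composition, but the inverse step is no more work, so I would do all three.

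First I would observe that $\mathrm{id}\in\bbS_{[N]}$ is $A$-preserving: taking $\pi_{\mathrm{row}}=\mathrm{id}$ gives $P_{\mathrm{id}}AP_{\mathrm{id}}=A$, so $\cH\neq\varnothing$. Next, for inversion, suppose $\pi\in\cH$ with an associated row permutation $\sigma\in\bbS_{[M]}$, so that $P_\sigma A P_\pi = A$. Multiplying this identity on the left by $P_{\sigma^{-1}}=P_\sigma^{-1}$ and on the right by $P_{\pi^{-1}}=P_\pi^{-1}$ yields $A = P_{\sigma^{-1}} A P_{\pi^{-1}}$, which exhibits $\sigma^{-1}$ as a valid row permutation witnessing that $\pi^{-1}$ is $A$-preserving; hence $\pi^{-1}\in\cH$.

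For closure under composition, let $\pi_1,\pi_2\in\cH$ with row permutations $\sigma_1,\sigma_2$, so that $P_{\sigma_k}AP_{\pi_k}=A$ for $k=1,2$. Substituting the identity $A=P_{\sigma_2}AP_{\pi_2}$ into the middle $A$ on the right-hand side of $A=P_{\sigma_1}AP_{\pi_1}$ gives
\[
A \;=\; P_{\sigma_1}P_{\sigma_2}\,A\,P_{\pi_2}P_{\pi_1} \;=\; P_{\sigma_2\circ\sigma_1}\,A\,P_{\pi_1\circ\pi_2},
\]
so $\pi_1\circ\pi_2$ is $A$-preserving with row permutation $\sigma_2\circ\sigma_1$; swapping the roles of $\pi_1$ and $\pi_2$ handles the other order. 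Together with the identity and inverse steps, this shows $\cH$ is a subgroup of $\bbS_{[N]}$.

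I do not expect a genuine obstacle here. The only points that require mild care are the bookkeeping of composition order for permutation matrices (which direction $P_\pi P_\rho$ composes) and the remark that the row permutation $\pi_{\mathrm{row}}$ associated to a given $\pi$ need not be unique when $A$ has repeated rows — but since the definition of $A$-preserving only asks for the \emph{existence} of such a $\pi_{\mathrm{row}}$, any valid choice may be used at each step, and the argument goes through verbatim.
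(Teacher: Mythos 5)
Your proof is correct. The paper states this lemma without giving any proof, treating it as routine; your direct verification (identity element, closure under inverses and composition, with the matrix-composition convention $P_\pi P_\rho = P_{\rho\circ\pi}$ tracked correctly and the observation that the row permutation $\pi_{\rm row}$ need only exist, not be unique) is precisely the standard argument the paper implicitly relies on.
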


\begin{lemma}\label{lem:Apreserving}
Suppose that $\pi$ is $A$-preserving. Then $A\xxx^\pi\le \vone$ if and only if
$A\xxx\le \vone$.
\end{lemma}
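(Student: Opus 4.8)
The plan is to unwind the definition of $A$-preserving directly in terms of the permutation matrices $P_\pi$ and $P_{\pi_{\rm row}}$, and then to exploit two elementary facts about permutation matrices: they fix the all-ones vector $\vone$, and they preserve the coordinatewise order, so that permuting the entries of a vector does not change whether it lies in the box $\{\zzz : \zzz \le \vone\}$.

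First I would rewrite the defining relation. Since $\pi$ is $A$-preserving, there is a row permutation $\pi_{\rm row}$ with $P_{\pi_{\rm row}} A P_\pi = A$. Multiplying on the left by $P_{\pi_{\rm row}}^{-1}$ gives $A P_\pi = P_{\pi_{\rm row}}^{-1} A$. Applying both sides to the vector $\xxx$ and using $P_\pi \xxx = \xxx^\pi$ yields the key identity
\[
A \xxx^\pi \;=\; P_{\pi_{\rm row}}^{-1} A \xxx .
\]

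Next I would observe that $P_{\pi_{\rm row}}^{-1}$ is itself a permutation matrix, so it merely reorders the $M$ entries of the vector $A\xxx$; in particular $P_{\pi_{\rm row}}^{-1}\vone = \vone$, and for any $\zzz \in \mathbb{R}^M$ we have $P_{\pi_{\rm row}}^{-1}\zzz \le \vone$ if and only if $\zzz \le \vone$. Taking $\zzz = A\xxx$ and combining with the displayed identity gives
$A\xxx^\pi \le \vone \iff P_{\pi_{\rm row}}^{-1} A\xxx \le \vone \iff A\xxx \le \vone$,
which is exactly the claim, with both implications obtained at once.

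There is essentially no obstacle here --- the statement is a formal consequence of the definition. The only points worth a moment's care are the bookkeeping of inverses (whether $\pi_{\rm row}$ or $\pi_{\rm row}^{-1}$ appears in the identity) and the trivial-but-necessary remark that permuting coordinates neither changes membership in $\{\zzz : \zzz \le \vone\}$ nor moves $\vone$. Alternatively, one could sidestep the question of direction entirely by invoking the preceding lemma that the $A$-preserving permutations form a group, so that $\pi^{-1}$ is also $A$-preserving and the ``only if'' direction follows by symmetry; but the identity above already delivers both directions simultaneously, so I would present the argument in that form.
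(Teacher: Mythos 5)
Your proof is correct and follows essentially the same route as the paper: the same manipulation of the defining identity $P_{\pi_{\rm row}}AP_\pi = A$, applied to $\xxx$, together with the observation that the permutation matrix $P_{\pi_{\rm row}}^{-1}$ fixes $\vone$ and preserves the coordinatewise order. The only (cosmetic) difference is that you extract both directions at once from the identity $A\xxx^\pi = P_{\pi_{\rm row}}^{-1}A\xxx$, whereas the paper proves one implication and gets the converse by citing that the $A$-preserving permutations form a group — an alternative you also note.
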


\begin{proof}
Since the set of all $A$-preserving permutations form a group, 
it suffices to show one direction.

Suppose that $A\xxx\le \vone$. 
Since $P_{\pi_{\rm row}}A P_\pi=A$ for some permutation $\pi_{\rm row}: [M] \to [M]$,
we have that 
\[\vone \ge A\xxx =P_{\pi_{\rm row}}A P_\pi \xxx= P_{\pi_{\rm row}}A \xxx^\pi.\]
Hence, $A \xxx^\pi\le P_{\pi_{\rm row}}^{-1}\vone=\vone$.
\end{proof}

\begin{proof}[Proof of Proposition~\ref{prop:reduction} ]
Set
\[ \xxx^* =\frac{\sum_{\pi\in G_A}\xxx^\pi}{|G_A|}.\]
By Lemma \ref{lem:Apreserving}, we have that $A\xxx^\pi\le \vone$ for all $\pi\in G_A$.
Therefore, $A\xxx^*\le \vone$ . It is also readily verified that $\sum_{i=1}^N x_i^*=\lambda$

Finally, we show that $\xxx^*$ is $\cO$-regular. For all $k\le N$ and $i,j\in O_k$,
we have that 
\[x^*_i=\frac{\sum_{\pi\in G_A}\xxx^\pi_i}{|G_A|}=\frac{\sum_{\pi\in G_A}\xxx_{\pi(i)}}{|G_A|}.\]
On the other hand, since $i,j\in O_k$, there exists a $\pi^*$ such that $\pi^*(j)=i$. 
Hence,
\[x^*_j=\frac{\sum_{\pi\in G_A}\xxx^\pi_j}{|G_A|}=\frac{\sum_{\pi\in G_A}\xxx_{\pi(j)}}{|G_A|}
=\frac{\sum_{\pi\in G_A}\xxx_{\pi\circ \pi^*(j)}}{|G_A|}==\frac{\sum_{\pi\in G_A}\xxx_{\pi(i)}}{|G_A|}.\]
Therefore, $x^*_i=x^*_j$.
\end{proof}

\newpage

\end{document}